\documentclass[a4paper,10pt]{article}
\usepackage{amsmath}
\usepackage{amsthm}
\let\newemptytheorem\newtheorem
\usepackage{amssymb}
\usepackage{bm}
\usepackage[margin=0.5in]{caption}
\usepackage{dsfont}
\usepackage{enumitem}
\usepackage{graphicx}
\usepackage{indentfirst}
\usepackage{mathrsfs}
\usepackage{mathtools}
\usepackage{nameref}
\usepackage[new]{old-arrows}
\usepackage{stmaryrd}
\usepackage{svg}
\usepackage[x11names]{xcolor}
\usepackage{xparse}
\usepackage{mathabx} 
\usepackage{blindtext}
\usepackage{titlesec}
\usepackage{titletoc}
\usepackage{mdframed}
\usepackage{letltxmacro}
\LetLtxMacro\amsproof\proof
\LetLtxMacro\amsendproof\endproof
\usepackage{thmbox}

\AtBeginDocument{
  \LetLtxMacro\proof\amsproof
  \LetLtxMacro\endproof\amsendproof
}

\newcommand{\deffont}[1]{\textbf{#1}}

\usepackage{sansmathfonts}
\usepackage[sfdefault]{atkinson}
\usepackage[T1]{fontenc}
\usepackage[colorlinks=true,
    citecolor=DeepSkyBlue4,
    linkcolor=black,
    urlcolor=DeepSkyBlue4,
    linktoc=page,
    hyperindex=true,
    pdfcreator={}]{hyperref} 
    
    \hypersetup{
        pdfauthor={Adrien Abgrall and Alexandra Gurieva and Camille Horbez},
        pdftitle={Algebraic and measurable embeddings between square-free right-angled Artin groups},
        pdfkeywords={Right-angled Artin groups, Embeddings, Measure equivalence, Extension graphs},
}

\usepackage{geometry}
\usepackage{setspace} %Interline space
\usepackage{enumitem}
\setlist{nosep}

\usepackage{graphics}
\normalsize
\usepackage{tcolorbox}

\newcounter{theo}[section]
\renewcommand{\thetheo}{\arabic{section}.\arabic{theo}}
\newenvironment{theo}[2][]{
\refstepcounter{theo}
\ifstrempty{#1}
{\mdfsetup{
frametitle={
\tikz[baseline=(current bounding box.east),outer sep=0pt]
\node[anchor=east,rectangle,fill=DeepSkyBlue4!15]
{\strut Theorem~\thetheo};}}
}
{\mdfsetup{
frametitle={
\tikz[baseline=(current bounding box.east),outer sep=0pt]
\node[anchor=east,rectangle,fill=DeepSkyBlue4!15]
{\strut Theorem~\thetheo:~#1};}}
}
\mdfsetup{innertopmargin=2pt,linecolor=black,
linewidth=1pt,topline=true,
frametitleaboveskip=\dimexpr-\ht\strutbox\relax
}
\begin{mdframed}[]\relax
\label{#2}}{\end{mdframed}}

\global\mdfdefinestyle{exampledefault}{topline=false,bottomline=false,rightline=false}

\theoremstyle{definition}
\newtheorem[S,bodystyle=\normalfont\noindent,titlestyle=\normalfont{~(#1)}]{thm}[theo]{Theorem}
\newtheorem[S,bodystyle=\normalfont\noindent,titlestyle=\normalfont{~(#1)}]{lem}[theo]{Lemma}
\newtheorem[S,bodystyle=\normalfont\noindent,titlestyle=\normalfont{~(#1)}]{coro}[theo]{Corollary}
\newtheorem[S,bodystyle=\normalfont\noindent,titlestyle=\normalfont{~(#1)}]{cor}[theo]{Corollary}
\newtheorem[S,bodystyle=\normalfont\noindent,titlestyle=\normalfont{~(#1)}]{prop}[theo]{Proposition}
\newtheorem[S,bodystyle=\normalfont\noindent,titlestyle=\normalfont{~(#1)}]{facts}[theo]{Facts}

\theoremstyle{remark}
\newemptytheorem{rem}[theo]{Remark}

\newemptytheorem{claim}[theo]{Claim}
\newemptytheorem{exer}[theo]{Exercise}
\newemptytheorem{q}[theo]{Question}
\theoremstyle{definition}
\newtheorem[S,bodystyle=\normalfont\noindent,titlestyle=\normalfont{~(#1)}]{defn}[theo]{Definition}
\newemptytheorem{ex}[theo]{Example}

\newtheorem[S,bodystyle=\normalfont\noindent,titlestyle=\normalfont{~(###1)}]{manualtheoreminner}{Theorem}
\newenvironment{manualtheorem}[1]{%
  \renewcommand\themanualtheoreminner{#1}%
  \manualtheoreminner
}{\endmanualtheoreminner}

\newcommand*{\subproofname}{Proof}
\newenvironment{subproof}[1][\subproofname]{\begin{proof}[#1]}{\end{proof}}

\newcommand{\gen}[1]{\left\langle {#1} \right\rangle}
\newcommand{\ngen}[1]{\left\langle \!\langle {#1}\right \rangle\!\rangle}

\newcommand{\NN}{\mathbb{N}}
\newcommand{\ZZ}{\mathbb{Z}}

\newcommand{\cat}{\mathrm{CAT}(0)}

\DeclareMathOperator{\lk}{lk}
\DeclareMathOperator{\st}{st}
\DeclareMathOperator{\clr}{clr}
\DeclareMathOperator{\supp}{supp}
\DeclareMathOperator{\xtype}{xtype}

\renewcommand{\leq}{\leqslant}
\renewcommand{\nleq}{\nleqslant}
\renewcommand{\geq}{\geqslant}

\renewcommand{\emptyset}{\varnothing}
\renewcommand{\epsilon}{\varepsilon}
\newcommand{\ext}{{}^{\mathrm{ext}}}

\newcommand{\mcl}{\mathcal}

\newcommand{\G}{\Gamma}

\newcommand{\w}{\omega}

\newcommand{\mclH}{\mcl{H}}
\newcommand{\mclG}{\mcl{G}}
\newcommand{\mclA}{\mcl{A}}

\newcommand{\rhoL}{\rho_{\Delta}}
\newcommand{\rhoG}{\rho_{\G}}

\newcommand{\AG}{A(\G)}

\newcounter{comments}

\title{Algebraic and measurable embeddings between square-free right-angled Artin groups}
\author{Adrien Abgrall \and Alexandra Gurieva \and Camille Horbez}
\date{\today}

\begin{document}

\maketitle

\begin{abstract}
    Let $\Gamma,\Delta$ be finite simplicial graphs, and assume that $\Gamma$ has no induced squares. We prove that the right-angled Artin group $A(\Delta)$ measurably embeds into $A(\Gamma)$ if and only if $A(\Delta)$ embeds as a subgroup in a graph product of free abelian groups over $\Gamma$. This in turn has a graph-theoretical characterisation which can be checked algorithmically.
    
    If additionally $A(\Gamma)$ and $A(\Delta)$ have cohomological dimension equal to two and $A(\Gamma)$ is not isomorphic to $\mathbb{Z}\times F_n$, we get that $A(\Delta)$ measurably embeds into $A(\Gamma)$ if and only if it embeds as a subgroup in $A(\Gamma)$.
    
    Our proof relies on the following algebraic statement. Let $K_1,\dots,K_n$ be a family of pairwise disjoint cliques in the square-free graph $\Gamma$ (or more generally in its extension graph $\Gamma\ext$), and let $g_1,\dots,g_n$ be elements of $A(\Gamma)$ with respective parabolic supports $A(K_1),\dots,A(K_n)$. Then the subgroup of $A(\Gamma)$ generated by $g_1,\dots,g_n$ is a right-angled Artin group, where the only relations impose that $g_i$ and $g_j$ commute if $K_i\cup K_j$ is contained in a clique of $\Gamma$ (or of $\Gamma\ext$).  
\end{abstract}

\vspace{1.3cm}
\setcounter{tocdepth}{1}
\startcontents \printcontents{}{0}[1]{}

\vfill
\noindent \textcolor{black!70}{This work is openly licensed via \href{https://creativecommons.org/licenses/by/4.0/}{Creative Commons CC-BY 4.0}}
\smallskip

\noindent \textcolor{black!70}{This work was completed without use of artificial intelligence. The authors do not consent for all or part of this work to be processed by artificial intelligence models.}
\newpage

\section{Introduction}

Right-angled Artin groups (RAAGs) form a rich and diverse family of groups. First introduced by Baudisch \cite{Bau}, they now provide emblematic examples of groups with nice actions on $\mathrm{CAT}(0)$ cube complexes. Given a graph $\Gamma$ (always assumed to be simplicial, i.e.\ unoriented, with no loop-edge and no multi-edge), the \deffont{right-angled Artin group} $A(\Gamma)$ is the group having one generator per vertex of $\Gamma$, whose only relations are given by the commutation of any two generators corresponding to adjacent vertices. 
\medskip

More generally, given a graph $\Gamma$ and a family $(G_v)_{v\in V(\Gamma)}$ of groups indexed by the vertex set $V(\Gamma)$, the \deffont{graph product} of $(G_v)$ over $\Gamma$ was defined by Green \cite{green} to be the following group:
\[\bigast_{v\in V\Gamma}G_v \,/ \ngen{ [g_v,g_w] \mid \{v,w\}\in E(\Gamma),\,  g_v\in G_v,\, g_w\in G_w},\] where $E(\Gamma)$ denotes the edge set of $\Gamma$.
Right-angled Artin groups are exactly the groups that split as graph products with all vertex groups infinite cyclic.

The goal of this paper is to study embeddings between right-angled Artin groups, both from the algebraic viewpoint and the one of measured group theory.

\bigskip

Let $\Gamma$ and $\Delta$ be finite graphs. A theorem of Droms asserts that the RAAGs $A(\Gamma)$ and $A(\Delta)$ are isomorphic if and only if $\Gamma$ and $\Delta$ are isomorphic graphs \cite{Droms_isomorphism}. The problem of understanding when $A(\Delta)$ embeds as a subgroup of $A(\Gamma)$, in terms of a condition on the graphs $\Gamma,\Delta$, is still open in general. Significant progress in this direction was made by Kim--Koberda \cite{Kim_Koberda}, who solved the embedding problem in several situations, including when $A(\Gamma)$ is at most two-dimensional, i.e.\ $\Gamma$ does not contain any  triangle. See also \cite{Gen} for another viewpoint on the question.
\medskip

A crucial tool in their solution was the introduction of the \deffont{extension graph} $\Gamma\ext$ of $\Gamma$, a graph acted upon by $A(\Gamma)$ which is now often considered as an analogue for RAAGs of the curve graph of a surface, see \cite{Kim_Koberda_curve_graph}. It is defined as follows: its vertices correspond to conjugates of the cyclic subgroups associated to the standard generators of $A(\Gamma)$ (i.e.~the vertices of $\Gamma$), and two such conjugates are adjacent if and only if they commute. When $\Gamma$ is triangle free, Kim and Koberda prove that $A(\Delta)$ embeds in $A(\Gamma)$ if and only if $\Delta$ embeds as an induced subgraph of $\Gamma\ext$, and this condition was later proved to be algorithmically verifiable by Casals-Ruiz \cite{Casals-Ruiz}. Here we recall that $\Delta$ is \deffont{induced} in $\Gamma\ext$, which we denote by $\Delta\leq\Gamma\ext$, if two vertices of $\Delta$ are adjacent in $\Delta$ if and only if they are adjacent in $\Gamma\ext$. In general, without the triangle-freeness condition, Kim and Koberda prove that $\Delta\leq\Gamma\ext$ implies that $A(\Delta)\leq A(\Gamma)$, and conversely $A(\Delta)\leq A(\Gamma)$ implies that $\Delta$ embeds as an induced subgraph of the \deffont{clique graph} $\Gamma\ext_k$ of the extension graph. This is the graph whose vertices are the non-empty cliques of $\Gamma\ext$, where two cliques are joined by an edge if they are contained in a common clique. There are however examples showing that none of these conditions is an equivalence in general \cite{CRDK}, and the general picture is not yet understood.
\medskip

From the perspective of large-scale geometry, a lot of work has revolved around the classification of finitely generated right-angled Artin groups up to quasi-isometry, including \cite{BN,BJN,BKS,Hua1,Hua2,Mar,Oh,Oh-cor}. Much less is known about quasi-isometric embeddings from $A(\Delta)$ to $A(\Gamma)$. Recent work of Bader--Bensaid--Petyt has initiated this study, exhibiting both rigidity and flexibility phenomena for possible embeddings \cite{BBP2,BBP1}.  
\bigskip

Switching the perspective from geometric to measured group theory, the classification of finitely generated right-angled Artin groups up to \deffont{measure equivalence}, a measurable analogue of quasi-isometry introduced by Gromov in \cite{Gro}, has been investigated in \cite{Horbez_Huang,Escalier_Horbez,Horbez_Huang_2}. Known cases often require an assumption about the finiteness of the outer automorphism groups of the RAAGs at stake. In the present paper, we consider measurable embeddings, an order-like generalisation of measure equivalence that first appeared in work of Sako \cite[Definition~2.1]{Sak} and was also studied in \cite{DKLMT} under the name of measure subgroups. Given two finite graphs $\Gamma,\Delta$ that do not contain any induced square, we determine when $A(\Delta)$ measurably embeds into $A(\Gamma)$.
\medskip

The definition is as follows. Given two countable groups $G,H$, one says that $H$ \deffont{measurably embeds} into $G$ if there exists a standard measure space $\Sigma$ equipped with a measure-preserving action of $G\times H$ by Borel automorphisms, such that 
\begin{itemize}
    \item the $G$-action on $\Sigma$ is free and admits a measurable fundamental domain of finite positive measure;
    \item the $H$-action on $\Sigma$ is free and admits a measurable fundamental domain. 
\end{itemize}
Measure equivalence corresponds to the case where the fundamental domain for the $H$-action is also assumed to have finite measure. If $H$ is a subgroup of a countable group $G$, then $H$ measurably embeds in $G$, and measurable embedding is a transitive relation (see Remarks~\ref{transitivité du plongement ME} and \ref{alg embedding is meas embedding}).
\medskip

When the actions of $G$ and $H$ on $\Sigma$ have a common fundamental domain (of finite measure), the groups $G$ and $H$ are said to be \deffont{orbit equivalent}. The terminology comes from an observation of Furman \cite{Furman_oe} and Gaboriau \cite[Section~2.1]{gaboriau_examples}, showing that $G$ and $H$ are orbit equivalent if and only if they have free, measure-preserving actions on a standard probability space with the same orbits. In the same way as measurable embeddings are an asymmetric analogue of measure equivalence, an asymmetric version of orbit equivalence is the situation of a standard probability space $X$ equipped with two measure-preserving actions of $G,H$ such that for almost every $x\in X$, the orbit $H\cdot x$ in contained in $G\cdot x$. A celebrated theorem of Gaboriau and Lyons shows the existence of such an $X$ in the case where $H$ is the non-abelian free group $F_2$ and $G$ is any non-amenable countable group \cite{Gaboriau_Lyons}. Like orbit equivalence implies measure equivalence, the existence of such an $X$ implies that $H$ measurably embeds into $G$ (see e.g. \cite[Remark~2.36]{DKLMT}).

\bigskip
Before stating our main result, it is worth recalling a theorem of Dye ensuring that all finitely generated free abelian groups are orbit equivalent \cite{Dye} -- this was later famously generalised to all countably infinite amenable groups by Ornstein and Weiss \cite{OW}. This theorem, combined with an argument due to Gaboriau for free products \cite{gaboriau_examples} and extended to graph products in \cite{Horbez_Huang}, implies that $A(\Gamma)$ is always orbit equivalent to any graph product of countably infinite amenable groups (in particular, non-trivial finite-rank free abelian groups) over $\Gamma$. 

\medskip
Note that if $H$ is a graph product of non-trivial free abelian groups over $\Gamma$, then $H$ is itself a RAAG, over a graph $\widetilde\Gamma$ obtained from $\Gamma$ by blowing-up each vertex into a clique: the graph $\widetilde\Gamma$ comes with a \deffont{clique map}, i.e.\ a surjective map $f$ onto $\Gamma$, sending vertices to vertices and edges to either vertices or edges, and such that $f(v)=f(w)$ only if $v$ and $w$ have the same star. Following \cite{Horbez_Huang_2} and \cite[Section~9]{Escalier_Horbez}, we say that a graph $\Gamma$ is \deffont{clique reduced} if distinct vertices of $\Gamma$ have distinct stars, and observe that any graph $\Gamma$ has a clique map to a clique-reduced graph, which is unique up to isomorphism and called the \deffont{clique reduction} $\clr(\Gamma)$. The discussion from the previous paragraph ensures that $A(\Gamma)$ and $A(\clr(\Gamma))$ are always orbit equivalent.
\newpage

\begin{theo}[Measurable embeddings between square-free RAAGs]{thm:main}
Let $\Delta,\Gamma$ be finite graphs, and assume that $\Gamma$ has no induced squares. Then the following are equivalent:
\begin{enumerate}
    \item\label{main-1} $A(\Delta)$ measurably embeds into $A(\Gamma)$;
    \item\label{main-1,5} there exist free measure-preserving actions of $A(\Delta)$ and $A(\Gamma)$ on a standard probability space $X$ such that $A(\Delta)\cdot x\subseteq A(\Gamma)\cdot x$ for almost every $x\in X$; 
    \item\label{main-3} $A(\Delta)$ embeds as a subgroup in a graph product of  free abelian groups over $\Gamma$;
    \item\label{main-2} $A(\Delta)$ embeds as a subgroup in the graph product of $\mathbb{Z}^{|V(\Delta)|}$ over $\Gamma$;
    \item\label{main-4} $\clr(\Delta)$ embeds as an induced  subgraph in $\Gamma\ext_k$.
\end{enumerate}
Additionally, there exists an algorithm which, given two finite graphs $\Delta,\Gamma$ as above, decides whether or not any of the above equivalent conditions holds. 
\end{theo}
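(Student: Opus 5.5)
I will prove the cycle of implications $\ref{main-2}\Rightarrow\ref{main-3}\Rightarrow\ref{main-1,5}\Rightarrow\ref{main-1}\Rightarrow\ref{main-4}\Rightarrow\ref{main-2}$, and treat algorithmic decidability separately at the end.

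\textbf{The easy implications.} $\ref{main-2}\Rightarrow\ref{main-3}$ is trivial. For $\ref{main-3}\Rightarrow\ref{main-1,5}$, let $H$ be a graph product of nontrivial free abelian groups over $\Gamma$ containing $A(\Delta)$. By Dye's theorem and the graph product argument of \cite{Horbez_Huang}, $H$ is orbit equivalent to $A(\Gamma)$. So there is a standard probability space $X$ with free measure-preserving actions of $H$ and $A(\Gamma)$ having the same orbits. Restricting the $H$-action to $A(\Delta)$ gives a free action with $A(\Delta)\cdot x\subseteq H\cdot x=A(\Gamma)\cdot x$. Vertex groups of rank $0$ can be discarded, since $\Gamma$ is then replaced by an induced subgraph, and we may pad with extra $\ZZ$ factors. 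For $\ref{main-1,5}\Rightarrow\ref{main-1}$, I take the standard coupling $\Sigma$ built from the orbit inclusion, as in \cite[Remark~2.36]{DKLMT}.

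\textbf{The rigidity step $\ref{main-1}\Rightarrow\ref{main-4}$.} This is the measured-group-theoretic core, and it is where square-freeness of $\Gamma$ is used. Start from a measurable embedding coupling $\Sigma$ of $A(\Delta)$ into $A(\Gamma)$. The aim is a measurable, $A(\Delta)$-equivariant map that sends each maximal parabolic abelian subgroup of $A(\Delta)$ (indexed by cliques of $\clr(\Delta)$ and their conjugates) to a clique of $\Gamma\ext$. This is modelled on the arguments of \cite{Horbez_Huang,Escalier_Horbez}. One recognises measurably the subgroups $A(K)$, for $K$ a clique, as amenable subgroups with large normalisers. One then uses that $\Gamma$ being square-free makes $\Gamma\ext$ have no induced squares, so that the stabilisers of these cliques are controlled, and the maps $\Sigma\to\{\text{cliques of }\Gamma\ext\}$ are essentially canonical.

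Equivariance and the preservation of commutation then give a graph map $\clr(\Delta)\to\Gamma\ext_k$ that preserves adjacency. Passing to the clique reduction is necessary here, because vertices with equal stars are indistinguishable measurably. The map must also preserve non-adjacency and be injective. I would get this by showing that non-commuting parabolics in $A(\Delta)$ cannot land in a common clique, for instance via an amenability argument: the image of a non-amenable subgroup $\langle a,b\rangle\cong F_2$ cannot sit inside the amenable clique stabiliser.

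I expect this step to be the main obstacle. The difficulty is that the $A(\Delta)$-fundamental domain has infinite measure, so the usual ME techniques must be adapted to the asymmetric setting.

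\textbf{The algebraic step $\ref{main-4}\Rightarrow\ref{main-2}$.} Given an induced embedding $\clr(\Delta)\hookrightarrow\Gamma\ext_k$, each vertex $v$ of $\clr(\Delta)$ maps to a clique $K_v$ of $\Gamma\ext$. First I would perturb the embedding so that the cliques are pairwise disjoint. This is done by blowing up inside $\mathbb{Z}^{|V(\Delta)|}$ vertex groups: I replace $A(\Gamma)$ by the graph product $G$ of $\ZZ^{|V(\Delta)|}$ over $\Gamma$, whose extension graph has many copies of each vertex, so that distinct $v$ can use distinct copies.

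I then pick elements $g_v$ whose parabolic support is exactly the clique subgroup of $K_v$, and apply the algebraic theorem announced in the abstract: the group $\langle g_v\rangle$ is a RAAG over the graph recording when $K_v\cup K_w$ lies in a clique, which is exactly $\clr(\Delta)$. Finally, $A(\Delta)$ embeds in $A(\clr(\Delta))\times$ (free abelian factors) inside $G$. To see this, replace each generator $g_v$ by several independent elements with the same support, one per preimage vertex of $\Delta$; this uses a blown-up clique in the $\ZZ^{|V(\Delta)|}$ factors and is why $|V(\Delta)|$ copies suffice.

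\textbf{Decidability.} I would reduce condition \ref{main-4} to a finite search over a bounded portion of $\Gamma\ext$. The plan is to adapt Casals-Ruiz's algorithm \cite{Casals-Ruiz}, showing that any induced subgraph of $\Gamma\ext_k$ on $n$ vertices can be moved into a finite, computable ball of $\Gamma\ext$ of radius depending on $n$ and $\Gamma$.
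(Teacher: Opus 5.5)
Your cycle of implications and the easy steps (\ref{main-2})$\Rightarrow$(\ref{main-3})$\Rightarrow$(\ref{main-1,5})$\Rightarrow$(\ref{main-1}) agree with the paper. The rigidity step (\ref{main-1})$\Rightarrow$(\ref{main-4}), however, has a genuine gap at vertices your sketch never addresses: the \emph{hats} of $\clr(\Delta)$, i.e.\ vertices whose star is a clique (equivalently, whose centraliser is abelian). Since only $\rho_\Delta$ is of action type, you cannot run a symmetric ME-style recognition on the $\Gamma$ side. What the paper transports instead, after replacing $\Delta$ by $\clr(\Delta)$ via orbit equivalence and transitivity, is the parabolic support $P_v$ of $\rho_\Delta^{-1}(\langle v\rangle)$ with respect to $\rho_\Gamma$; this only needs $\rho_\Gamma$ to have trivial kernel. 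For a non-hat $v$, amenability of normalised subgroupoids shows $P_v^\perp$ is non-abelian, and square-freeness then forces $P_v$ to be abelian, so $\xtype(P_v)$ is a clique. For a hat there is no large normaliser to detect, and $P_v$ need not be abelian, even for honest subgroup embeddings. Take $\Delta$ the path $a-b-c$, $\Gamma$ the star with centre $y$ and leaves $x_1,x_2,x_3$, and the embedding $a\mapsto x_1x_2$, $b\mapsto y$, $c\mapsto x_3$: then $\supp(x_1x_2)\cong F_2$. So there is no canonical, let alone $A(\Delta)$-equivariant, clique attached to a hat. The paper only extracts a combinatorial parabolic witness map (Definition~\ref{defn:witness}, Proposition~\ref{prop:parabolic_support}) and then places hats by hand (Proposition~\ref{prop:witness-to-graph}). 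Link-classes of hats are added by increasing link size. With $Q=\langle P_w\rangle_{w\in\lk_\Delta(v)}$, the hat $v$ is sent to $\xtype(Q)$ if $Q^\perp$ is abelian. Otherwise its class is sent to pairwise non-adjacent single vertices of $\xtype(Q^\perp)$, chosen non-adjacent to the earlier images of non-neighbours (Lemma~\ref{lemma:xtype}, which again uses square-freeness and infinite diameter). Relatedly, your amenability argument for non-adjacency only handles the case where both supports are abelian; the normaliser $P_K\times P_K^\perp$ of a clique subgroup is non-amenable for non-hat vertices. The hat step needs item (b) of Definition~\ref{defn:witness} in the case where only one of $P_u,P_v$ is abelian, and the paper gets this from a centre-versus-normaliser lemma rather than from amenability.

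The other two steps also need repair. In (\ref{main-4})$\Rightarrow$(\ref{main-2}), your assertion that $A(\Delta)$ embeds in $A(\clr(\Delta))\times\ZZ^N$ is false. For $\Delta$ two disjoint edges, $\ZZ^2\ast\ZZ^2$ does not embed in $F_2\times\ZZ^N$: each $\ZZ^2$ factor would contain a non-trivial central element, which would then commute with the other factor. Giving several generators literally the same support also violates the disjointness hypothesis of the algebraic theorem. The correct move, which your blow-up remark gestures at, has three parts. First, embed $\Delta$ itself, not $\clr(\Delta)$, into $\widetilde\Gamma\ext_k$ with pairwise disjoint cliques, where $\widetilde\Gamma$ is the blow-up of order $|V(\Delta)|$ and each vertex of $\Delta$ uses its own layer (Lemma~\ref{lem:disjoint_cliques}; checking that this is still an induced embedding takes some work). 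Second, observe that $\widetilde\Gamma$ is still square-free because a square is clique reduced (Lemma~\ref{lem:square_expansion}). Third, apply the algebraic theorem to $\Delta$ directly. For decidability, no adaptation of Casals-Ruiz's algorithm is needed, and balls in $\Gamma\ext$, which is locally infinite in general, need not be finite. Instead, $\Delta\leq\Gamma\ext_k$ holds if and only if one of finitely many labelled graphs $\Delta'$ satisfies $\Delta'\leq\Gamma\ext$, where $\Delta'$ encodes the cliques and has at most $n|V(\Delta)|$ vertices, $n$ being the clique number of $\Gamma$. Casals-Ruiz's theorem decides each such question as a black box (Corollary~\ref{cor:algorithmic_cliques}).
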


As a corollary, if $\Delta,\Gamma$ are finite graphs, $A(\Delta)$ measurably embeds into $A(\Gamma)$, and $A(\Gamma)$ is coherent (i.e.\ its finitely generated subgroups are finitely presented), then $A(\Delta)$ is coherent (Corollary~\ref{cor:coherent}). We also prove that there is no \emph{universal finitely generated square-free RAAG for measurable embeddings}, i.e.~a finitely generated square-free RAAG in which every finitely generated square-free RAAG would measurably embed (Corollary~\ref{cor:no_universal}). Lastly, if $F_2\times F_2$ measurably embeds into $A(\Gamma)$, then it embeds as a subgroup (Remark~\ref{rem:square}).

\smallskip
Intriguingly, in the setting of Theorem~\ref{thm:main}, characterising when $A(\Delta)$ algebraically embeds in $A(\Gamma)$, and knowing whether this can be algorithmically verifiable from their defining graphs, remains an open question. Likewise, the measure equivalence classification of RAAGs over square-free graphs is an open problem.

\begin{rem}
Conditions~(\ref{main-3}),(\ref{main-2}),(\ref{main-4}), are equivalent without the assumption that $\Gamma$ is finite. The implications (\ref{main-3})$\Rightarrow$(\ref{main-1,5})$\Rightarrow$(\ref{main-1}) also hold provided $\Gamma$ is countable. See Remark~\ref{rem:finite_unnecessary}. 

The assumption that $\Gamma$ has no induced squares cannot be dropped from the statement. We provide a counterexample to (\ref{main-4})$\Rightarrow$(\ref{main-3}) in Example~\ref{ex:square}, with $\Gamma$ a square, and $\Delta$ a line on four vertices. However, we do not know in this example whether $A(\Delta)$ measurably embeds into $A(\Gamma)=F_2\times F_2$. In general, it seems hard to determine which RAAGs measurably embed into $F_2\times F_2$. In contrast, a theorem of Rull states that any RAAG on a finite bipartite graph quasi-isometrically embeds into $F_2\times F_2$. As another hint of the subtleness of understanding measurable embeddings into $F_2\times F_2$, the following question is open.
\end{rem}

\begin{q}
Does there exist a standard probability space admitting two free, measure-preserving actions, one of $F_2=\langle a,b\rangle$, the other of $F_2\times F_2=A\times B$, so that for almost every $x\in X$, one has $\langle a\rangle\cdot x\subseteq A\cdot x$ and $\langle b\rangle\cdot x\subseteq B\cdot x$?
\end{q}

\begin{rem}\label{rem:meas-not-alg}
Even if $\Delta$ is clique reduced, $A(\Delta)$ can embed measurably into $A(\Gamma)$ without embedding as a subgroup. For instance, if $\Gamma$ is a pentagon, and $\Delta$ is  obtained from $\Gamma$ by adding one extra vertex joined to exactly two adjacent vertices of $\Gamma$, then $A(\Delta)$ measurably embeds into $A(\Gamma)$ and all equivalent conditions from Theorem~\ref{thm:main} are satisfied. Indeed, let $v_1,\dots,v_5$ be the vertices of the pentagon in $\Delta$, and $\hat v$ be its extra vertex (say it is adjacent to $v_1$ and $v_2$); let $w_1,\dots,w_5$ be the vertices of $\Gamma$. Then $\Delta$ embeds as an induced subgraph of $\Gamma\ext_k$, by sending each $v_i$ to $w_i$, and $\hat v$ to the edge $[w_1,w_2]$, proving that  (\ref{main-4}) holds. Algebraically, $A(\Delta)$ embeds as a subgroup in a graph of product of $\mathbb{Z}^2$ over $\Gamma$ (proving that (\ref{main-2}) holds) as follows: letting $\{a_i,b_i\}$ be a basis of the $\mathbb{Z}^2$ vertex group associated to $w_i$, one sends every $v_i$ to $a_i$, and $\hat v$ to the product $b_1b_2$. Note however that $A(\Delta)$ does not embed as a subgroup of $A(\Gamma)$ because $A(\Delta)$ contains $\mathbb{Z}^3$ while $A(\Gamma)$ does not.  
\end{rem}

In contrast with the previous remark, there are situations where the existence of a measurable embedding coincides with the existence of an algebraic embedding. For instance, say that a right-angled Artin group $A(\Gamma)$ is \deffont{two-dimensional} if its cohomological dimension is equal to two -- equivalently
$\Gamma$ does not contain any induced triangle and $A(\Gamma)$ is not a free group. We obtain the following statement, which is derived from a more technical version given in Theorem~\ref{thm:meas-algebraic}. 

\begin{theo}[The two-dimensional case]{thm:intro-meas-algebraic}
Let $\Gamma,\Delta$ be finite graphs such that $A(\Gamma),A(\Delta)$ are two-dimensional and $\Gamma$ does not contain any induced square.
\begin{itemize}
    \item If $\Gamma$ is a star graph, then $A(\Delta)$ mesurably embeds into $A(\Gamma)$ if and only if $\clr(\Delta)$ is an edgeless graph or a star graph.
    \item If $\Gamma$ is not a star graph, then $A(\Delta)$ measurably embeds into $A(\Gamma)$ if and only if $A(\Delta)$ embeds as a subgroup in $A(\Gamma)$.
\end{itemize}
\end{theo}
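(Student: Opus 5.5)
We derive this from Theorem~\ref{thm:main}, which (since $\Gamma$ is square-free) says that $A(\Delta)$ measurably embeds into $A(\Gamma)$ if and only if $\clr(\Delta)\leq\Gamma\ext_k$. We combine this with the Kim--Koberda theorem in the triangle-free case: $A(\Delta)\leq A(\Gamma)$ if and only if $\Delta\leq\Gamma\ext$. Since $A(\Gamma)$ is two-dimensional, $\Gamma$ is triangle-free, and so is $\Gamma\ext$. Hence the cliques of $\Gamma\ext$ are vertices and edges, and $\Gamma\ext_k$ is the graph on $V(\Gamma\ext)\sqcup E(\Gamma\ext)$ in which:
\begin{itemize}
\item two vertices are adjacent iff they span an edge;
\item a vertex and an edge are adjacent iff the vertex is an endpoint of the edge;
\item two distinct edges are never adjacent, since their union has at least three vertices and there are no triangles.
\end{itemize}

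The key step is to analyse an induced embedding $\phi\colon\clr(\Delta)\to\Gamma\ext_k$. Let $e=[a,b]$ be an edge of $\Gamma\ext$ in the image, say $e=\phi(u)$. The neighbours of $u$ map into $\{a,b\}$, so $u$ has at most two neighbours, and if it has two, they map to $a$ and $b$. Moreover, two vertex-neighbours of $u$ that are both vertex-images would have to be adjacent in $\Gamma\ext$, so they must be exactly $a$ and $b$. Since $\clr(\Delta)$ is also triangle-free, $u$ has valence at most $1$: otherwise its two neighbours $\phi^{-1}(a),\phi^{-1}(b)$ would be adjacent and form a triangle with $u$.

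A valence-one vertex $u$ with neighbour $x$ has $\st(u)\subseteq\st(x)$. A valence-zero vertex is isolated. The plan is to show that these \emph{edge-type} vertices can be re-embedded as vertices of $\Gamma\ext$, provided $\Gamma$ is not a star.

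\emph{Non-star case.} The converse direction is free, since an algebraic embedding is a measurable one (Remark~\ref{alg embedding is meas embedding}). For the forward direction, we replace each edge-type vertex $u$, with neighbour $\phi(x)=a$ and $\phi(u)=[a,b]$, by a vertex $c$ of $\Gamma\ext$ that is adjacent to $a$ and to no other vertex of $\phi(\clr(\Delta))\cap V(\Gamma\ext)$. When $\Gamma$ is not a star, the link of $a$ in $\Gamma\ext$ is infinite; it should be an infinite discrete set containing conjugates $g b g^{-1}$ with $g\in\langle a\rangle$ far out. Using the tree-like (quasi-tree) structure of $\Gamma\ext$ and the finiteness of $\Delta$, we choose pairwise distinct such $c$ with no extra adjacencies. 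Isolated edge-type vertices are handled similarly, by picking a far-away vertex.

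This gives $\clr(\Delta)\leq\Gamma\ext$, hence $A(\clr(\Delta))\leq A(\Gamma)$. It remains to undo the clique reduction. Since $\clr(\Delta)$ is triangle-free, the collapsed cliques are edges, so $\Delta$ is obtained by doubling vertices $v$ into adjacent twins $v,v'$ with the same star. By Kim--Koberda we need $\Delta\leq\Gamma\ext$: the twin $v'$ must go to a vertex adjacent to the image of $v$ and to all neighbours of $v$. This is impossible if $v$ has a neighbour, since that would create a triangle. However, twins with a common neighbour would give a triangle in $\Delta$, so twins are isolated edges of $\Delta$ or entire components. I expect the main obstacle to be this bookkeeping on twins, together with the genericity choice in $\Gamma\ext$. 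For isolated edges, we embed a far-away edge of $\Gamma\ext$, which exists since $\Gamma$ has an edge ($A(\Gamma)$ is two-dimensional).

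\emph{Star case.} Here $\Gamma$ is a star with centre $z$, so $A(\Gamma)=\mathbb{Z}\times F_n$ and $\Gamma\ext$ is the star with centre $z$ and infinitely many leaves, with no edges between leaves. Then $\Gamma\ext_k$ consists of $z$, the leaves $\ell_i$, and the edge-vertices $[z,\ell_i]$, each adjacent to $z$ and $\ell_i$. We determine its finite induced triangle-free subgraphs whose clique reduction is itself, i.e.\ with no twins. These are exactly the star graphs and the edgeless graphs:
\begin{itemize}
\item if $z$ is used, the graph is a star centred at $z$, possibly together with isolated edge-vertices, but each such edge-vertex is adjacent to $z$ and hence is a leaf;
\item if $z$ is not used, the graph is a disjoint union of edges $\ell_i$--$[z,\ell_i]$ and isolated vertices; each such edge is a twin pair, so a clique-reduced graph of this form is edgeless.
\end{itemize}
Conversely, every star graph and every edgeless graph appears. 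This yields the first bullet via Theorem~\ref{thm:main}.
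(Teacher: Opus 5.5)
Your overall route is viable, and it differs from the paper's: you post-process the conclusion $\clr(\Delta)\leq\Gamma\ext_k$ of Theorem~\ref{thm:main} in the triangle-free setting. Several parts are fine: the star case, the treatment of isolated vertices of $\clr(\Delta)$, and the undoing of the clique reduction (twins are edge components of $\Delta$, which you send to far-away edges).

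\textbf{The gap.} It lies in the key re-embedding step for a valence-one vertex $u$ with $\phi(u)=[a,b]$ and neighbour $x$, $\phi(x)=a$. You justify the existence of the replacement vertex $c$ by the claim that when $\Gamma$ is not a star, the link of $a$ in $\Gamma\ext$ is infinite. This is false. Take $\Gamma$ the path $p_1p_2p_3p_4$, which is two-dimensional, square-free and not a star. Then $Z(p_1)=\langle p_1,p_2\rangle$, so $\lk_{\Gamma\ext}(p_1)=\{p_2\}$.

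Your description of the link is also wrong. The conjugates $gbg^{-1}$ with $g\in\langle a\rangle$ are all equal to $b$, since $a$ and $b$ commute. In fact, if $a=hvh^{-1}$ with $v\in V(\Gamma)$, the link of $a$ is $h(\lk_\Gamma(v))\ext h^{-1}$, which is infinite exactly when $\lk_\Gamma(v)$ has at least two vertices. The non-star hypothesis plays no role at this step; it is only needed for isolated vertices and edge components. Finally, appealing to a quasi-tree structure does not by itself produce the generic choice of $c$.

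\textbf{The missing ideas.} The first is clique-reducedness.
\begin{itemize}
\item Since $\clr(\Delta)$ is clique-reduced, $x$ cannot be a leaf, because otherwise $\{x,u\}$ would be an edge component with $\st(x)=\st(u)$. So $x$ has a second neighbour $y$.
\item We have $\phi(y)\neq b$ (else $y$ would be adjacent to $u$) and $\phi(y)\neq[a,b]$.
\item Hence $a$ has a neighbour in $\Gamma\ext$ other than $b$: either $\phi(y)$ itself or the other endpoint of $\phi(y)$.
\item Therefore $\langle a\rangle^\perp$ is a non-abelian free group, and the link of $a$ is infinite.
\end{itemize}

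The second is square-freeness, which makes the choice of $c$ possible. Each of the finitely many other image vertices $x'\neq a$ (including previously placed replacements) rules out at most one vertex of this link. If $x'$ is adjacent to $a$, it rules out only itself, since there are no triangles. If $x'$ is not adjacent to $a$, it rules out only a common neighbour with $a$, and there is at most one: two common neighbours would be non-adjacent and span an induced square with $a$ and $x'$, which Corollary~\ref{cor:square_F2xF2} excludes. This is exactly Lemma~\ref{lemma:xtype} with $Q=\langle a\rangle$. With these two points added, your argument goes through.

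\textbf{Comparison with the paper.} The paper derives the statement from Theorem~\ref{thm:meas-algebraic}, whose proof never passes through $\Gamma\ext_k$. Propositions~\ref{prop:parabolic_support} and~\ref{prop:witness-to-graph-warmup} turn the measurable embedding directly into an embedding into $\Gamma\ext$ of $\Delta$ minus its edge components. There, clique-reducedness shows that the neighbour $w$ of a leaf is not a hat, and Item~\ref{witness-c} of Definition~\ref{defn:witness} makes $P_w^\perp$ non-abelian; this is the analogue of your first point. Your approach is a purely graph-theoretic corollary of Theorem~\ref{thm:main}, whereas the paper's yields the more general Theorem~\ref{thm:meas-algebraic}.
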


The case where $\Gamma$ is a star graph needs to be separate as the two-dimensional $\ZZ^2 \ast \ZZ^2$ measurably embeds into $\ZZ\times F_n$ for all $n\geq 2$ (because $\mathbb{Z}^2\ast\mathbb{Z}^2$ is orbit equivalent to $F_2$), yet $\ZZ^2 \ast \ZZ^2$ does not algebraically embed in $\ZZ\times F_n$ for any $n$ (the image of any $\ZZ^2$ would contain a central element). In fact, that case follows directly from Theorem~\ref{thm:main} and does not even require $A(\Delta)$ to be two-dimensional.

\subsection*{A word on the proof of Theorem~\ref{thm:main}.} The implication (\ref{main-2})$\Rightarrow$(\ref{main-3}) is clear. The implication (\ref{main-3})$\Rightarrow$(\ref{main-1,5}) follows from the fact that any graph product of countably infinite free abelian groups is orbit equivalent to $A(\Gamma)$, after noting that in (\ref{main-3}) one can always assume the vertex groups to be countable and infinite. Moreover, (\ref{main-1,5})$\Rightarrow$(\ref{main-1}) is a general fact about measurable embeddings, see \cite[Remark~2.36]{DKLMT}. We now say a word about the two main implications. Our proof of Theorem~\ref{thm:intro-meas-algebraic} is a simpler variant of the same argument; we will touch upon this in the final paragraph of our proof sketch of the implication (\ref{main-1})$\Rightarrow$(\ref{main-4}). The algorithmicity of our conditions relies on work of Casals-Ruiz \cite{Casals-Ruiz} and is explained in  Section~\ref{sec:algorithmic}.

\medbreak

\underline{\emph{On the proof of (\ref{main-1})$\Rightarrow$(\ref{main-4}), see Section~\ref{section:me_to_graph}.}} An important tool in the study of RAAGs is the concept of a \deffont{parabolic subgroup}: if $\Theta\leq\Gamma$ is an induced subgraph, then the natural homomorphism $A(\Theta)\to A(\Gamma)$ induced by the graph inclusion is injective. Its image, as well as all its conjugates in $A(\Gamma)$, are called parabolic subgroups of $A(\Gamma)$.

Up to replacing $\Delta$ by $\clr(\Delta)$, we will assume that $\Delta$ is clique reduced. Assume for a moment that $A(\Delta)$ embeds as a subgroup (and not just measurably) in $A(\Gamma)$, through an injective homomorphism $f\colon A(\Delta)\to A(\Gamma)$.

Given a vertex $u\in V(\Delta)$, identified with a generator of $A(\Delta)$, let $P_u$ be the smallest parabolic subgroup of $A(\Gamma)$ that contains $f(u)$. Using the fact that $\Gamma$ has no induced squares, one can then check that the assignment $u\mapsto P_u$ satisfies the following properties (see Remark~\ref{rem:algebraic_embeddings} and Lemma~\ref{lemma nonhat has clique support} for details):
\begin{enumerate}[label={(\alph*)}]
    \item\label{witness-intro-1} if $u$ and $v$ are adjacent in $\Delta$, then $P_u$ and $P_v$ normalise each other;
    \item\label{witness-intro-2} the converse also holds provided one of $P_u,P_v$ is abelian;
    \item\label{witness-intro-3} for every vertex $u\in V(\Delta)$ whose star is not a clique (i.e.\ whose normaliser in $A(\Delta)$ is non-abelian), $P_u$ is abelian and its normaliser is non-abelian.
\end{enumerate}
From there, if no vertex of $\Delta$ has a star which is a clique, then every $P_u$ is abelian, hence naturally corresponds to a clique of $\Gamma\ext$; this yields the desired embedding $\Delta\leq\Gamma\ext_k$. The combinatorics are a bit more subtle for vertices whose star is a clique, which have to be dealt with separately; this is the content of Proposition~\ref{prop:witness-to-graph}.

The same strategy extends to the case where $A(\Delta)$ is only assumed to measurably embed in $A(\Gamma)$. In this case, as reviewed in Section~\ref{section meas embeddings}, we have a measured groupoid equipped with two cocycles towards $A(\Delta)$ and $A(\Gamma)$. To run the above strategy, we rely on work of Huang and the third-named author \cite{Horbez_Huang}, which provides a notion of parabolic support for any measured groupoid equipped with a cocycle towards a RAAG. This enables, from a measurable embedding from $A(\Delta)$ to $A(\Gamma)$, to get an assignment $u\mapsto P_u$ that satisfies the same conditions~\ref{witness-intro-1},~\ref{witness-intro-2},~\ref{witness-intro-3} as above (see Lemma~\ref{lemma nonhat has clique support} and Proposition~\ref{prop:parabolic_support}), which is all we need to complete the proof.  

Let us mention here that under the stronger assumption that $A(\Gamma)$ and $A(\Delta)$ are two-dimensional and $\Gamma$ is not a star graph (as in Theorem~\ref{thm:intro-meas-algebraic}), whenever $u\in V(\Delta)$ is a vertex whose star is not a clique, the subgroup $P_u$ is cyclic; using this, we get an embedding $\Delta\leq\Gamma\ext$, without having to pass to the clique graph of $\Gamma\ext$, see Proposition~\ref{prop:witness-to-graph-warmup}. By the work of Kim--Koberda, this is enough to derive that $A(\Delta)\leq A(\Gamma)$ in this case.
\medbreak

\underline{\emph{On the proof of (\ref{main-4})$\Rightarrow$(\ref{main-2}), see Section~\ref{section:graph_to_algebraic}.}} Start with an embedding $\Delta\leq\Gamma\ext_k$ as an induced subgraph. Every vertex $K\in V(\Delta)$ corresponds to a clique in $\Gamma\ext$, which in turns yields an abelian parabolic subgroup $P_K$ of $A(\Gamma)$. For every such $K$, choose an element $g_K\in P_K$ with full support, i.e.\ not contained in any proper parabolic subgroup of $P_K$; we will write $\supp(g_k)=P_K$. The assignment $K\mapsto g_K$ then yields a homomorphism $A(\Delta)\to A(\Gamma)$, and the crucial question when trying to understand embeddings between right-angled Artin groups is the following: when is this homomorphism injective? In the case where every clique $K$ is reduced to one vertex, Kim and Koberda prove that, up to replacing each $g_K$ by a power $g_K^M$, this homomorphism is an embedding \cite{Kim_Koberda}. When $\Gamma$ has no induced squares, we extend their conclusion to the case where the cliques $K$ corresponding to vertices of $\Delta$ are pairwise disjoint. 

\begin{theo}[RAAG subgroups of RAAGs, see Theorem~\ref{thm:graph_to_algebraic_explicit}]{thm:main-2}
Let $\Gamma$ be a graph with no induced squares, and let $\Delta\leq \Gamma\ext_k$ be a finite graph whose vertices correspond to pairwise disjoint cliques of $\Gamma\ext$. Then there exists an integer $M>0$ with the following property: for any family $(g_K)_{K\in V(\Delta)}$ of elements of $A(\Gamma)$ such that $\supp(g_K)=P_K$ for all $K$, the map
\[\begin{aligned}
V(\Delta)&\to A(\Gamma)\\
K&\mapsto g_K^{M}
\end{aligned}\]
extends to an embedding $A(\Delta)\hookrightarrow A(\Gamma)$.
\end{theo}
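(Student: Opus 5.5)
We argue by a ping-pong type argument on the extension graph, following the scheme of Kim--Koberda, where elements with clique support replace conjugates of generators. We first normalise: up to conjugating by an element of $A(\Gamma)$, the parabolic subgroups $P_K$ are abelian parabolic subgroups $gA(C)g^{-1}$ with $C$ a clique of $\Gamma$. Each $g_K$ lies in $P_K\cong\ZZ^{|K|}$ and has full support, so no coordinate of $g_K$ vanishes. Let $w=\prod_i g_{K_i}^{M e_i}$ be a reduced nontrivial word in $A(\Delta)$. This means that no two syllables $g_{K_i}^{Me_i}$ and $g_{K_j}^{Me_j}$ with $K_i=K_j$ can be brought together by commutations through syllables adjacent in $\Delta$. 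We must show that its image in $A(\Gamma)$ is nontrivial. The plan is to adapt the Kim--Koberda proof, which uses the action of $A(\Gamma)$ on $\Gamma\ext$ and a normal form for "star-length" of words. Here the vertex $K$ of $\Delta$ is replaced by a clique $K\subseteq\Gamma\ext$, and $g_K$ acts like a product of powers of the pairwise commuting generators in $K$.

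First, we use disjointness together with square-freeness to understand commutation. Two elements $g_K$, $g_L$ with $K\cap L=\emptyset$ commute iff every vertex of $K$ is adjacent to every vertex of $L$. This holds iff $K\cup L$ spans a clique, i.e.\ iff $K,L$ are adjacent in $\Gamma\ext_k$, hence in $\Delta$ since $\Delta$ is induced. This settles the relations, and the homomorphism is well defined.

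The key geometric input is that $\Gamma\ext$ is also square-free (this should follow from $\Gamma$ being square-free, via the tree-like structure of $\Gamma\ext$ as an amalgamation of copies of $\Gamma$ along stars). Square-freeness controls how the stars of disjoint cliques interact. If $K\neq L$ are disjoint and not jointly a clique, then the set of vertices adjacent to all of $K$ and all of $L$ is itself a clique. Otherwise, two non-adjacent common neighbours together with non-adjacent $x\in K$, $y\in L$ (or a non-edge inside $K\cup L$) would form an induced square.

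Second, for injectivity we would reduce to the Kim--Koberda theorem for single vertices by a projection or retraction trick. For each clique $K$ choose a vertex $v_K\in K$. We would like to map $A(\Gamma)$ onto something where $g_K$ becomes a power of $v_K$, but disjointness makes this impossible globally. Instead we work directly with a ping-pong on $\Gamma\ext$: for each $K$ let $X_K$ be the set of vertices of $\Gamma\ext$ whose "projection" to the star of $K$ is nontrivial. The approach would be to follow Koberda's argument for mapping classes and Kim--Koberda's for RAAGs: high powers $g_K^M$ act on the space of "normal forms", i.e.\ on the Bass--Serre-type trees coming from splittings of $A(\Gamma)$ along $\st(v)$ for $v\in K$, as loxodromic-like elements. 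We would then show that a reduced word produces a normal form of increasing length. Concretely, we take the HNN/amalgam splitting $A(\Gamma)=A(\Gamma\setminus v)*_{A(\lk v)}A(\st v)$ for each vertex $v$, and count the syllables of the image word in each such splitting. For a syllable of $g_K^{M}$ with $v\in K$, the normal form contributes a $v$-exponent divisible by $M$. This is large enough to survive cancellation provided $M$ exceeds a constant depending on the finite data, namely the finitely many conjugators and the lengths of the $g_K$. This is where $M$ is chosen, and it must be independent of the particular $g_K$ beyond their supports. That forces a choice tied to the conjugating elements only; since the exponents of $g_K$ are nonzero integers, a uniform bound via $|Me|\geq M$ should suffice.

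The main obstacle is the cancellation analysis. Between two syllables $g_K^{Me}$ and $g_K^{Me'}$ there is a subword that does not commute with $g_K$. We must show that this subword cannot absorb the $v$-part of $g_K$ for every $v\in K$ simultaneously. In the single-vertex case this is Kim--Koberda's lemma that the intermediate word leaves $\st(v)$. Here the intermediate word may commute with some vertices of $K$ but not others, so we only get a surviving syllable for some $v\in K$ depending on the subword. The clique-intersection observation above, which comes from square-freeness, is what should guarantee that such a $v$ exists consistently, since the common centraliser of the intermediate letters meets $K$ in a proper subset. We expect that making this induction on word length rigorous, by choosing the witnessing vertex $v$ for each syllable coherently, is the hardest step.
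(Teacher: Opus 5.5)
\paragraph{A gap: the injectivity argument is missing.} The cancellation analysis you defer as ``the hardest step'' is the whole content of the theorem, and the ingredients you assemble for it cannot be enough.

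Look at the first example of Remark~\ref{rem:necessary_assumptions_algebraic}. There $\Gamma=C_6$ has no induced square, and neither does $\Gamma\ext$. Since $\Gamma\ext$ is even triangle-free, no vertex is adjacent to all of $K\cup L$ for distinct edges $K,L$. So the geometric facts you isolate all hold. Yet for the six edges, with $g_K$ the product of the two endpoints, one has $(ab)^M(bc)^{-M}(cd)^M(de)^{-M}(ef)^M(fa)^{-M}=1$ for \emph{every} $M$.

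The cancellation there is exact, so taking $M$ large to make $v$-syllables ``survive'' is not the relevant mechanism. The only hypothesis that fails in this example is disjointness of the cliques. Your sketch uses disjointness only for the commutation criterion, but it has to enter the cancellation argument itself.

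Two further steps do not work as written:
\begin{itemize}
\item Letting $M$ depend on the lengths of the $g_K$ is incompatible with uniformity over all families with prescribed supports. The patch ``$|Me|\geq M$ should suffice'' is not justified.
\item The vertices of $K\subseteq\Gamma\ext$ are conjugates $hvh^{-1}$, not standard generators. So the splittings of $A(\Gamma)$ along $\st_\Gamma(v)$ do not read off exponents of $g_K$, and you cannot conjugate all the $P_K$ into standard position at once.
\end{itemize}

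\paragraph{The missing idea: decouple the two issues.} The power is needed only to pass from $\Gamma\ext$ to $\Gamma$. Let $\Gamma'\leq\Gamma\ext$ be the finite subgraph spanned by the union of the cliques. Theorem~\ref{thm:powers_Kim_Koberda} gives $M$, depending only on $\Gamma'$, such that $x\mapsto x^M$ embeds $A(\Gamma')$ into $A(\Gamma)$. Since $K$ is a clique, $g_K^M$ is the image of the element $h_K\in A(\Gamma')$ with the same exponents, so uniformity in the $g_K$ is automatic.

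It then remains to show that, for cliques of $\Gamma'$ itself, \emph{every} $\Delta$-supported family induces an injection, with no power at all (Lemma~\ref{lem:subgraph_disjoint_injective}). The paper argues by induction on $|V(\Delta)|$:
\begin{itemize}
\item It factors through $A(\widetilde\Delta)$, where $K_0$ is replaced by its singletons. Each step then adjoins only a one-vertex clique $\{v\}$, which sidesteps your problem of choosing a witnessing vertex of $K$ coherently.
\item For a singleton, Britton's lemma in $A(\Gamma)=A(\Gamma\setminus\{v\})\ast_{A(\lk_\Gamma(v))}$ gives injectivity (Lemma~\ref{lem:inductive_step}), provided $f'(A(\Delta'))\cap A(\lk_\Gamma(v))=f'(A(\Delta'_{\mid\lk_\Gamma(v)}))$.
\item That intersection property (Lemma~\ref{lem:intersection_2_convex}) is proved by a reduction, or disc-diagram, argument. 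Disjointness forces all letters of a clique to have simultaneously zero total exponent. A failure then produces a $\Delta$-spike $v_0,v_1,v_2$, which together with $v$ spans an induced square. This is exactly where square-freeness enters.
\end{itemize}
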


In fact, this statement is a slightly simplified version; Theorem~\ref{thm:graph_to_algebraic_explicit} actually allows some induced squares, provided they do not interfere too much with the cliques given by $\Delta$. However, neither the assumption that $\Gamma$ has no induced squares, nor that the vertices of $\Delta$ correspond to pairwise disjoint cliques of $\Gamma\ext$, can be dropped entirely, as shown by the examples in Remark~\ref{rem:necessary_assumptions_algebraic}.

Our proof of Theorem~\ref{thm:main-2} relies on reduced forms for elements of RAAGs, in an argument inspired by disc digrams in $\mathrm{CAT}(0)$ cube complexes (Lemma~\ref{lem:intersection_2_convex}). It also exploits in Lemma~\ref{lem:inductive_step} the decomposition of $A(\Gamma)$ as an HNN extension \[A(\Gamma)\simeq A(\Gamma\setminus\{v\})\ast_{A(\lk_\Gamma(v))}\] in order to run an inductive argument, adding all vertices from $\Delta$ one at a time. Some care is required, as the HNN splittings only permit to adjoin to $\Delta$ vertices corresponding to cliques with a single element. We get around this difficulty by introducing an intermediate RAAG $A(\widetilde \Delta)$ constructed in such a way that we can use the one-element clique case to obtain injections from $A(\Delta)$ to $A(\widetilde \Delta)$ and from $A(\widetilde \Delta)$ to $A(\Gamma)$ (see the proof of Lemma~\ref{lem:subgraph_disjoint_injective}).

Back to our proof that (\ref{main-4})$\Rightarrow$(\ref{main-2}), if $\clr(\Delta)\leq\Gamma\ext_k$, then by blowing-up $\Gamma$ to a graph $\widetilde{\Gamma}$ such that $A(\widetilde{\Gamma})$ is a graph product of free abelian groups $\mathbb{Z}^n$ over $\Gamma$, one can ensure that $\Delta\leq\widetilde\Gamma\ext_k$ with the extra property that vertices of $\Delta$ represent pairwise disjoint cliques of $\widetilde\Gamma\ext$ (see Lemma~\ref{lem:disjoint_cliques}; this can be done with $n=|V(\Delta)|$). We insist that even if $\Delta\leq\Gamma\ext_k$ to start with (without passing to the clique reduction), this blow-up is in general necessary to get the disjointness property, see Remark~\ref{rem:necessary_expansion}. It then follows from Theorem~\ref{thm:main-2} that $A(\Delta)$ embeds in $A(\widetilde\Gamma)$, as desired.

\subsection*{Organisation of the paper.} After a preliminary section on graphs and right-angled Artin groups (Section~\ref{sec:background-raags}), we establish some useful facts about extension graphs and their clique graphs in Section~\ref{section:extension_graphs}. In Section~\ref{section:background_me}, we review the notions of mesurable embeddings and measured groupoids. We then prove the implication (\ref{main-1})$\Rightarrow$(\ref{main-4}) of Theorem~\ref{thm:main} in Section~\ref{section:me_to_graph}, and prove Theorem~\ref{thm:main-2} in Section~\ref{section:graph_to_algebraic}. We finally summarise the proofs of Theorems~\ref{thm:main} and~\ref{thm:intro-meas-algebraic} and give some examples and corollaries in Section~\ref{sec:conclusion}.

\subsection*{Acknowledgments.}

We thank Jean Lécureux for many fruitful discussions. We thank Naomi Andrew for her useful advice, Sam Fisher for suggesting Corollary~\ref{cor:coherent} to us, and Harry Petyt for pointing us to \cite{Rul}. 

\section{Background on graphs and right-angled Artin groups}\label{sec:background-raags}
\subsection{Graphs}

All graphs will be simplicial (unoriented, no loops or multi-edges), and endowed with their usual path metric. \deffont{Graph maps} send vertices to vertices and edges to edges or vertices. \deffont{Graph morphisms} send vertices to vertices and edges to edges. For a graph $\Gamma$, we denote by $V(\Gamma)$ its vertex set, and by $E(\Gamma)$ its edge set. A subgraph $\Delta$ of $\Gamma$ is \deffont{induced} if any two vertices of $\Delta$ are adjacent in $\Delta$ if and only if they are adjacent in $\Gamma$. We write $\Delta\leq\Gamma$ to mean that $\Delta$ embeds as an induced subgraph of $\Gamma$.

\begin{defn}
\label{de:hat}Let $\Gamma$ be a graph and $v$ be a vertex of $\Gamma$. The \deffont{link} and \deffont{star} of $v$, denoted respectively by $\lk_\Gamma(v)$ and $\st_\Gamma(v)$, are the subgraphs of $\Gamma$ induced respectively by the set of vertices at distance $1$ from $v$, and the set of vertices at distance at most $1$ from $v$. The \deffont{link-equivalence class} of $v$ is the set $[v]_{\lk}$ of vertices $w$ such that $\lk_\Gamma(w)=\lk_\Gamma(v)$.

For $\Delta\leq \Gamma$, we denote by $\Delta^\perp$ the (possibly empty) subgraph induced by the intersection of $\lk_\Gamma(v)$ over all vertices $v$ of $\Delta$. In other words, $\Delta^\perp$ is the largest induced subgraph of $\Gamma$ spanning a join with $\Delta$.
\end{defn}

\begin{defn}
A graph map $f\colon \Gamma \to \Delta$ is a \deffont{clique map} if $f$ is surjective and for any vertices $v,w$ of $\Gamma$ such that $f(v)=f(w)$, we have $\st_\Gamma(v)=\st_\Gamma(w)$. The \deffont{order} of $f$ is the (possibly infinite) cardinality of the largest vertex preimage. We say that $\Gamma$ is a \deffont{clique expansion} (of order $o$) of $\Delta$ if there exists a clique map $f\colon \Gamma \to \Delta$ (of order $o$).

Following \cite{Horbez_Huang_2} and \cite[Section~9]{Escalier_Horbez}, a graph $\Delta$ is \deffont{clique reduced} if distinct vertices of $\Delta$ have distinct stars, i.e.~the only clique maps with domain $\Delta$ are isomorphisms. Every graph $\Gamma$ has a clique map to a clique-reduced graph, and this graph is unique up to isomorphism; we call it the \deffont{clique reduction} of $\Gamma$ and denote it $\clr(\Gamma)$.
\end{defn}

Note that we always have $\clr(\Gamma)\leq \Gamma$ since any choice of one vertex in each vertex preimage provides a section of the clique map. Note also that all the vertex preimages, edge preimages, and more generally clique preimages of a clique map are cliques.

\begin{lem}
\label{lem:square_expansion}Let $\Gamma$, $\Delta$ be graphs, with $\Delta$ clique-reduced. Let $\Gamma'$ be a clique expansion of $\Gamma$. If $\Delta\leq \Gamma'$, then $\Delta\leq \Gamma$.
\end{lem}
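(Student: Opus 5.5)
Let $f\colon \Gamma'\to\Gamma$ be a clique map and fix an embedding $\iota\colon\Delta\hookrightarrow\Gamma'$ as an induced subgraph. The plan is simply to compose: I will show that $f\circ\iota$ is injective on vertices. Once that is known, it is automatically an induced embedding, since the adjacency relation is preserved in both directions, as explained below.

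First, the clique map preserves and reflects adjacency between vertices with distinct images. If $x,y\in V(\Gamma')$ are adjacent, then $f(x),f(y)$ are adjacent or equal, because $f$ is a graph map. Conversely, suppose $f(x)\neq f(y)$ are adjacent in $\Gamma$. The preimage of the edge $[f(x),f(y)]$ is a clique, as noted just before the statement, so $x$ and $y$ are adjacent. Indeed, the standard construction of the clique expansion has the full join between the preimages of adjacent vertices. One can also argue directly from the star condition: pick an edge $[x',y']$ of $\Gamma'$ mapping onto $[f(x),f(y)]$, which exists by surjectivity of $f$ onto edges. Then $\st(x)=\st(x')\ni y'$, so $\st(y')=\st(y)\ni x$. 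Hence for $x,y$ with $f(x)\neq f(y)$, we have $x\sim y$ in $\Gamma'$ if and only if $f(x)\sim f(y)$ in $\Gamma$.

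Second, injectivity. Suppose $u\neq w$ are vertices of $\Delta$ with $f(\iota(u))=f(\iota(w))$. The clique map condition gives $\st_{\Gamma'}(\iota u)=\st_{\Gamma'}(\iota w)$; in particular $\iota u$ and $\iota w$ are adjacent. Intersecting with the induced subgraph $\iota(\Delta)$ then yields $\st_\Delta(u)=\st_\Delta(w)$. This contradicts the assumption that $\Delta$ is clique reduced. So $f\circ\iota$ is injective on vertices, and by the first step it is an induced embedding $\Delta\leq\Gamma$.

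The only slightly delicate point is the adjacency-reflection in the first step, namely that the edge preimages of a clique map are full joins. I would verify this via the star argument above rather than rely on the informal remark.
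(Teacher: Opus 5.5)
Your proof is correct and follows the paper's argument. The paper observes that $f(\Delta)\leq\Gamma$ because edge preimages of a clique map are cliques, and that the restriction $\Delta\to f(\Delta)$ is a clique map, hence an isomorphism since $\Delta$ is clique reduced; your adjacency-reflection step and your injectivity step (intersecting the equal stars with the induced subgraph $\iota(\Delta)$) are exactly these two observations written out in full.
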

\begin{proof}
Let $f\colon \Gamma'\to \Gamma$ be a clique map. Assume $\Delta\leq \Gamma'$. Since edge preimages under $f$ are cliques, $f(\Delta)\leq \Gamma$. The restriction $\Delta\to f(\Delta)$ of $f$ is clearly a clique map. Thus, $\Delta\simeq f(\Delta)\leq \Gamma$, since $\Delta$ is clique reduced.
\end{proof}

The following definition appears in work of Kim--Koberda \cite{Kim_Koberda}.
\begin{defn}
Let $\Gamma$ be a graph. The \deffont{clique graph} of $\Gamma$ is the graph $\Gamma_k$ having one vertex for each non-empty finite clique of $\Gamma$, and one edge for each pair of cliques which are both contained in the same clique of $\Gamma$.
\end{defn}

\begin{rem}\label{rem:clique-graph}
Note that $\Gamma\leq \Gamma_k$, by sending every vertex $v$ to the one-element clique $\{v\}$. Note also that if $\Delta\leq\Gamma$, then $\Delta_k\leq\Gamma_k$.
\end{rem}

\begin{rem}\label{rem:clique-map}
Any clique map $f\colon \Gamma\to \Delta$ induces a clique map $f_k\colon \Gamma_k\to \Delta_k$, as follows. Given a finite non-empty clique $K = \{k_1,\dots, k_n\}$ of $\Gamma$, we set $f_k(K)\coloneqq \{f(k_1),\dots, f(k_n)\}$, which is a finite and non-empty clique of $\Delta$. If $K_1, K_2$ are adjacent vertices of $\Gamma_k$, they are contained in a common clique $K$, hence $f_k(K_1), f_k(K_2)$ are contained in the common clique $f_k(K)$, thus they are adjacent or equal in $\Delta_k$. Therefore, $f_k$ is a graph map. Given $L = \{\ell_1,\dots, \ell_m\}$ a finite non-empty clique of $\Gamma$, since $f$ is surjective on vertices, there exist $k_1,\dots, k_m$ vertices of $\Gamma$ mapped by $f$ to $\ell_1,\dots, \ell_m$ respectively. Since cliques preimages of the clique map $f$ are cliques, $K\coloneqq \{k_1,\dots, k_m\}$ is a clique and $f_k(K)=L$: the map $f_k$ is surjective on vertices. The same argument proves that $f_k$ is surjective on edges. To conclude, let $K_1, K_2$ be vertices of $\Gamma_k$, and assume that $f_k(K_1) = f_k(K_2)$. Let $K_3$ be a vertex of $\Gamma_k$ adjacent or equal to $K_1$. Then $f_k(K_2)$ and $f_k(K_3)$ are adjacent or equal, hence contained in a common clique $L$ of $\Gamma$. This means that $K_2$ and $K_3$ are both contained in $f^{-1}(L)$ which is a clique of $\Gamma$ since $f$ is a clique map: $K_2$ and $K_3$ are adjacent or equal. Since $K_3$ was arbitrary, $\st_{\Gamma_k}(K_1) = \st_{\Gamma_k}(K_2)$ and $f_k$ is a clique map.
\end{rem}

\subsection{Right-angled Artin groups and graph products}

Recall that, given a graph $\Gamma$ and a family of groups indexed by $V(\Gamma)$, $(G_v)_{v\in V(\Gamma)}$, one defines the \deffont{graph product} of $(G_v)$ over $\Gamma$ as the following group:
\[\bigast_{v\in V\Gamma}G_v \,/ \ngen{ [g_v,g_w] \mid \{v,w\}\in E(\Gamma),\,  g_v\in G_v,\, g_w\in G_w}.\] 

The \deffont{right-angled Artin group}, or RAAG, associated to $\Gamma$, denoted by $A(\Gamma)$, is the graph product over $\Gamma$ of a family consisting only of infinite cyclic groups. As usual, we consider the infinite cyclic group $G_v$ as generated by the element $v\in V(\Gamma)$, so that $V(\Gamma)$ appears as a generating subset of $A(\Gamma)$, called the set of \deffont{standard generators}.

\begin{defn}
\label{defn:reduction}A \deffont{reduced form} for an element $g\in A(\Gamma)$ is a word $w$ in the standard generators of $A(\Gamma)$ and their inverses representing $g$ and having minimal length among words with this property (such a reduced form is not unique in general).

Let $w$ and $w'$ be two words whose letters are elements of $V(\Gamma)$ and their inverses. Assume that $w$ and $w'$ represent the same element $g$ of $A(\Gamma)$, and that $w$ is a reduced form. A \deffont{reduction} of $w'$ into $w$ is a sequence of words starting at $w'$ and ending at $w$, where each word differs from the next by an \deffont{elementary move}: either the exchange of two adjacent letters commuting in $A(\Gamma)$ and distinct even up to inversion, or the deletion of a two-letter subword of the form $vv^{-1}$ or $v^{-1}v$.
\end{defn}

Note that every word of the reduction still represents $g$. Such a reduction always exists for any choice of $w, w'$ by work of Hermiller and Meier \cite{Hermiller_Meier}. This can be used to prove that RAAGs are torsion free. Given a word $w = v_1^{\epsilon_1}\dots v_n^{\epsilon_n}$ with $v_i\in V(\Gamma)$ and $\epsilon_i=\pm 1$, we define $w^{-1}\coloneqq v_n^{-\epsilon_n}\dots v_1^{-\epsilon_1}$ as usual.

By a result of Droms \cite{Droms_isomorphism}, two RAAGs $A(\Gamma)$ and $A(\Delta)$ are isomorphic if and only if $\Gamma$ and $\Delta$ are isomorphic as graphs, making the defining graph of a RAAG a well-defined object up to isomorphism. A RAAG splits as a non-trivial direct (resp. free) product if and only if its defining graph splits as a non-trivial join (resp. is disconnected).

\begin{rem}
\label{rem:graph_product_expansion}
The class of right-angled Artin groups is closed under graph products. A group splits as a graph product of non-trivial free abelian groups (of rank at most $n$) over a graph $\Gamma$ if and only if it is a RAAG whose defining graph is a clique expansion of $\Gamma$ (of order at most $n$).
\end{rem}

Recall also that for every induced subgraph $\Delta\leq \Gamma$, the natural group homomorphism $A(\Delta)\to A(\Gamma)$ is injective, allowing us to identify $A(\Delta)$ with a subgroup of $A(\Gamma)$. In other words, $\Delta\leq \Gamma$ implies $A(\Delta)\leq A(\Gamma)$, but the converse is false (for example when $A(\Gamma)$ is a free group). Still with $\Delta \leq \Gamma$, a word $w$ in reduced form (using letters in $V(\Gamma)$ and their inverses) representing an element of $A(\Delta)$ contains only letters in $V(\Delta)$ and their inverses, because reducing a word on $V(\Delta)$ into $w$ does not introduce new letters. Consequently, for $\Delta_1,\Delta_2\leq \Gamma$, we have $A(\Delta_1)\cap A(\Delta_2) = A(\Delta_1\cap\Delta_2)$. A subgroup of $A(\Gamma)$ is \deffont{parabolic} if it is conjugate to $A(\Delta)$ for some $\Delta\leq \Gamma$. The class of parabolic subgroups is closed under intersections \cite[Proposition~2.6]{DKR}, \cite[Corollary~3.6]{Antolin_Minasyan}.

For $\Gamma$ a graph and $g\in A(\Gamma)$, the \deffont{(parabolic) support} of $g$ is the smallest parabolic subgroup of $A(\Gamma)$ containing $g$, denoted $\supp(g)$.

Given $\Delta\leq\Gamma$, the normaliser of $A(\Delta)$ in $A(\Gamma)$ is equal to the (internal) direct product $A(\Delta)\times A(\Delta^\perp)$ by \cite[Proposition~2.2(1)]{CCV}, \cite[Proposition~3.13]{Antolin_Minasyan}. Let now $P$ be a parabolic subgroup of $A(\Gamma)$: there exist $g\in A(\Gamma)$ and $\Delta\leq \Gamma$ such that $P = gA(\Delta)g^{-1}$. The subgraph $\Delta$ is uniquely determined and called the \deffont{type} of $P$ by \cite[Proposition~2.2(2)]{CCV}, \cite[Corollary~3.8]{Antolin_Minasyan}. The element $g$ is uniquely determined up to right-multiplication by an element of $N_{A(\Gamma)}(A(\Delta))=A(\Delta)\times A(\Delta^\perp)$. As a consequence, in the case where $\Delta$ is a single vertex, no two distinct standard generators (elements of $V(\Gamma)$) are conjugate in $A(\Gamma)$.

\begin{rem}\label{rem:parabolic}
Given $\Delta\leq\Gamma$, the parabolic subgroup $A(\Delta)$ is also a right-angled Artin group (over $\Delta$), so one can consider parabolic subgroups of $A(\Delta)$ for this structure: these are exactly the subgroups of the form $hA(\Theta)h^{-1}$ with $\Theta\leq\Delta$ and $h\in A(\Delta)$. In particular, every parabolic subgroup of $A(\Delta)$ is a parabolic subgroup of $A(\Gamma)$.

Conversely, every parabolic subgroup of $A(\Gamma)$ which is contained in $A(\Delta)$, is also a parabolic subgroup of $A(\Delta)$. Indeed, by \cite[Lemma~3.7]{Antolin_Minasyan}, for every $\Theta\leq\Gamma$ and $g\in A(\Gamma)$, if $gA(\Theta)g^{-1}\leq A(\Delta)$, then there exists $h\in A(\Delta)$ such that $gA(\Theta)g^{-1}=h A(\Theta)h^{-1}$. 

More generally, if $P=gA(\Delta)g^{-1}$ for some $g\in A(\Gamma)$, then parabolic subgroups of $A(\Gamma)$ that are contained in $P$ are exactly the subgroups of the form $gQg^{-1}$, where $Q\subseteq A(\Delta)$ is a parabolic subgroup of $A(\Delta)$, for its structure of right-angled Artin group over $\Delta$.
\end{rem}

\begin{defn}
\label{def:support_perp}Let $P$ be a parabolic subgroup of $A(\Gamma)$ of type $\Delta$, and write $P = gA(\Delta)g^{-1}$. Define $P^\perp$ to be the parabolic subgroup $gA(\Delta^\perp)g^{-1}$, which does not depend on the choice of $g$ in its coset of $N_{A(\Gamma)}(A(\Delta))$, since the latter normalises $A(\Delta^\perp)$. Note that $P$ and $P^\perp$ span a direct product in $A(\Gamma)$ and $N_{A(\Gamma)}(P) = P\times P^\perp$.

With the same notations, if $\Delta$ has a maximal join decomposition as the join of $\Delta_1, \dots , \Delta_n$ (such a decomposition always exists if $\Delta$ is finite), define the \deffont{maximal product decomposition} of $P$ to be the (internal) direct product of parabolics $P = gA(\Delta_1)g^{-1} \times \dots \times gA(\Delta_n)g^{-1}$. This does not depend on the choice of $g$ because $N_{A(\Gamma)}(A(\Delta)) = A(\Delta)\times A(\Delta^\perp) = A(\Delta_1)\times \dots \times A(\Delta_n)\times A(\Delta^\perp)$ normalises each of the $A(\Delta_i)$.
\end{defn}

\begin{thm}[{Servatius \cite[Centraliser~theorem]{Servatius}}]
\label{thm:centraliser}Let $\Gamma$ be a graph and $g\in A(\Gamma)$. Let $P_1\times \dots \times P_n$ be the maximal product decomposition of $\supp(g)$, and write $g$ as a product $g_1^{\alpha_1}\cdots g_n^{\alpha_n}$, where $g_i\in P_i$ and $g_i$ is not a proper power. Then the centraliser of $g$ in $A(\Gamma)$, which is also the normaliser of $\gen{g}$, splits as the following (internal) direct product:
\[Z_{A(\Gamma)}(g)= \gen{g_1}\times\dots\times \gen{g_n}\times \supp(g)^\perp.\]
\end{thm}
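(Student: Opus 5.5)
The plan is to reduce to the case where $\supp(g)$ is a standard parabolic subgroup, then peel off the factors of the maximal product decomposition one at a time, and finally treat the irreducible case separately.

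\emph{Reduction and easy inclusion.} Since centralisers, supports and the decomposition $P_1\times\dots\times P_n$ are all equivariant under conjugation, I first conjugate $g$ so that $\supp(g)=A(\Delta)$ for some finite $\Delta\leq\Gamma$. Let $\Delta=\Delta_1\ast\dots\ast\Delta_n$ be the maximal join decomposition, so that $P_i=A(\Delta_i)$. The inclusion $\supseteq$ is immediate: the factors $P_i$ pairwise commute, each $g_i$ commutes with $g_i^{\alpha_i}$, and $\supp(g)^\perp$ commutes with $\supp(g)$. That the centraliser equals the normaliser of $\gen{g}$ follows from torsion-freeness, since $hgh^{-1}=g^{-1}$ would give $h^2\in Z(g)$, and a standard argument then rules this out.

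\emph{Reverse inclusion, step 1.} Let $h\in Z_{A(\Gamma)}(g)$. Because $\supp(hgh^{-1})=h\,\supp(g)\,h^{-1}$ by minimality of the support, we get that $h$ normalises $A(\Delta)$. Hence
\[h\in A(\Delta)\times A(\Delta^\perp)=A(\Delta_1)\times\dots\times A(\Delta_n)\times A(\Delta^\perp).\]
Write $h=h_1\cdots h_n h_\perp$ accordingly. The element $h_\perp$ commutes with $g$. Since the product is direct, $[h,g]=1$ forces $[h_i,g_i^{\alpha_i}]=1$ in $A(\Delta_i)$ for each $i$. So the problem reduces to the following claim: if $\Theta$ is finite and not a join, and $x\in A(\Theta)$ has full support $A(\Theta)$, then $Z_{A(\Theta)}(x)$ is cyclic. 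Here I use that $g_i^{\alpha_i}$, and hence $g_i$, has full support in $A(\Delta_i)$, because otherwise $\supp(g)$ would be smaller.

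\emph{Reverse inclusion, step 2 (conclusion from the claim).} Given the claim, $Z_{A(\Delta_i)}(g_i^{\alpha_i})=\gen{c}$ for some $c$. This group contains $g_i$, so $g_i=c^{m}$ for some $m$. Since $g_i$ is not a proper power, $m=\pm1$, and $h_i\in\gen{g_i}$ as required.

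\emph{Step 3 (the claim; main obstacle).} If $\Theta$ is a single vertex, then $A(\Theta)\cong\ZZ$ and there is nothing to prove. Otherwise $\Theta$ is not a join, so an element of full support acts as a rank-one isometry on the universal cover of the Salvetti complex, by Behrstock--Charney. Its centraliser then preserves its axis's parallel set, which is bounded-width, so the centraliser is virtually cyclic. Being torsion-free, it is cyclic.

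If I wanted to avoid the $\mathrm{CAT}(0)$ input, I would argue combinatorially instead. Take $x$ cyclically reduced, and let $y$ commute with $x$. Comparing reduced forms of $yx^k$ and $x^ky$ for large $k$, using the reduction moves of Hermiller--Meier, shows that either $y$ is a power of the root of $x$, or the letters of $y$ not absorbed into $x$ commute with all of $\supp(x)=\Theta$. The second case is impossible inside $A(\Theta)$, since then $\Theta$ would decompose as a join. Making this cancellation argument rigorous is the hardest step.
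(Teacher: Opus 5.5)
Your proof is correct. The paper gives no argument of its own here; it quotes Servatius, whose proof is a combinatorial analysis of cyclically reduced words and their pure factors. So your route is genuinely different.

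\textbf{What you do differently.} You use $h\,\supp(g)\,h^{-1}=\supp(hgh^{-1})=\supp(g)$ together with the normaliser theorem $N(A(\Delta))=A(\Delta)\times A(\Delta^\perp)$, which the paper also quotes from Charney--Crisp--Vogtmann and Antol\'in--Minasyan. This reduces everything to an irreducible claim: an element $x$ of full support in $A(\Theta)$, with $\Theta$ finite and not a join, has cyclic centraliser in $A(\Theta)$. You then prove that claim geometrically:
\begin{enumerate}
\item Full support means $x$ is not conjugate into any join subgroup of $A(\Theta)$, so $x$ is rank one on the universal cover of the Salvetti complex.
\item Hence $\mathrm{Min}(x)$ has bounded cross-section, since an unbounded convex cross-section would contain a ray and so a flat half-plane bounded by an axis.
\item By properness, $\gen{x}$ then has finite index in $Z_{A(\Theta)}(x)$, and a torsion-free virtually cyclic group is cyclic.
\end{enumerate}
You correctly apply the rank-one criterion inside $A(\Delta_i)$ rather than in $A(\Gamma)$, where $g_i$ is generally not rank one.

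\textbf{What each approach buys.} Yours gives a short, conceptual explanation of why each irreducible factor contributes only a cyclic group. The cost is two substantial external inputs. Check that the version of the Behrstock--Charney criterion you cite covers disconnected $\Delta_i$. If it does not, that case is the classical fact that, in a free product, an element not conjugate into a factor has cyclic centraliser. Servatius' argument is elementary and self-contained. Your combinatorial fallback is essentially a sketch of it, but as written it is not a proof, so the $\mathrm{CAT}(0)$ route is what carries your argument.

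\textbf{The normaliser aside is wrong as written.} The claim that the centraliser is the normaliser of $\gen{g}$ does not follow from torsion-freeness. Knowing $h^2\in Z_{A(\Gamma)}(g)$ does not exclude $hgh^{-1}=g^{-1}$: in the torsion-free Klein bottle group $\langle a,b\mid bab^{-1}=a^{-1}\rangle$, the element $b^2$ centralises $a$ while $b$ inverts it. You need a property specific to right-angled Artin groups. Two fixes:
\begin{enumerate}
\item From $hgh^{-1}=g^{-1}$ one gets $(hg)^2=h^2$. Right-angled Artin groups have unique roots, being bi-orderable, so $hg=h$ and $g=1$.
\item Rerun your Step~3 with $N_{A(\Theta)}(\gen{x})$ in place of $Z_{A(\Theta)}(x)$. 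It preserves $\mathrm{Min}(x)=\mathrm{Min}(x^{-1})$, so it is infinite cyclic, hence abelian. Your Step~1 only uses that $h$ normalises $\supp(g)=\supp(g^{-1})$, so it then forces $hgh^{-1}=g$.
\end{enumerate}
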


In particular, we always have the inclusion
\[Z_{A(\Gamma)}(g) \subseteq \supp(g)\times \supp(g)^\perp,\]
and this inclusion becomes an equality if $\supp(g)$ is abelian, because the $P_i$ are cyclic in that case. In the special case where $g$ is equal to a standard generator $v\in V(\Gamma)$, this becomes $Z_{A(\Gamma)}(v) = A(\st_\Gamma(v))$.

\subsection{Extension graphs} The following definition also appears in the article of Kim and Koberda \cite{Kim_Koberda}.
\begin{defn}
Let $\Gamma$ be a graph. The \deffont{extension graph} of $\Gamma$ is the graph $\Gamma\ext$ with vertex set $\{gvg^{-1}\mid g \in A(\Gamma), v \in V(\Gamma)\}$, where two vertices are joined by an edge if and only if they are distinct, commuting elements of $A(\Gamma)$. The group $A(\Gamma)$ has a natural action by conjugation on $\Gamma\ext$. For consistency of notation, given a subgraph $\Delta$ of $\Gamma\ext$ and $g\in A(\Gamma)$, we will denote by $g\Delta g^{-1}$ its $g$-translate under this action, and say the subgraphs $\Delta$ and $g\Delta g^{-1}$ are \deffont{conjugate}.

We use the notation $\Gamma\ext_k$ to denote the clique graph of $\Gamma\ext$ (which is different in general from $(\Gamma_k)\ext$).
\end{defn}

Even when $\Gamma$ is finite, $\Gamma\ext$ is in general a locally infinite graph. It has a purely graph-theoretic but quite involved definition as the limit of a nested sequence of finite graphs starting with $\Gamma$ and iteratively doubling the graph over the star of some vertex, visiting each of the vertices infinitely many times (see \cite[Lemma~3.1]{Kim_Koberda}). Note that we always have $\Gamma\leq \Gamma\ext$.

\medskip
We will further discuss extension graphs in Section~\ref{section:extension_graphs}, but we record already some elementary properties from \cite{Kim_Koberda} (which are proved only in the case where $\Gamma$ is finite, but the proofs work directly in the infinite case).
\medskip

\begin{lem}[{\cite[Lemma~3.5]{Kim_Koberda}}]
\label{lem:extension_properties}The following hold for every graph $\Gamma$:
\begin{itemize}
    \item If $\Gamma$ splits as a join $\Delta * \Theta$, then $\Gamma\ext$ splits as a join $\Delta\ext * \Theta\ext$.
    \item If $\Gamma$ splits as a disjoint union $\Delta \sqcup \Theta$ of two non-empty subgraphs, then $\Gamma\ext$ splits as a disjoint union of infinitely many connected components, which are all isomorphic to $\Delta\ext$ or $\Theta\ext$.
    \item For any graph $\Delta$, if $\Delta\leq \Gamma$, then $\Delta\ext\leq \Gamma\ext$.
    \item If $\Gamma$ is connected, then $\Gamma\ext$ is connected.
    \item If $\Gamma$ is finite and connected, then $\Gamma\ext$ has finite diameter if and only if $\Gamma$ splits as a non-trivial join or consists of a single vertex.
\end{itemize}
\end{lem}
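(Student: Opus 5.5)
These five properties are elementary and I would prove them in the order listed, reading everything off the algebra of $A(\Gamma)$. Throughout I would use that vertices of $\Gamma\ext$ are conjugates of standard generators, that adjacency means commutation, and the normal form and parabolic-subgroup facts recalled above.

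\emph{Join.} If $\Gamma=\Delta*\Theta$, then $A(\Gamma)=A(\Delta)\times A(\Theta)$. Write $g=ab$ with $a\in A(\Delta)$ and $b\in A(\Theta)$. For $v\in V(\Delta)$ we get $gvg^{-1}=ava^{-1}$, since $b$ commutes with $A(\Delta)$; similarly for $v\in V(\Theta)$. So $V(\Gamma\ext)$ is the union of the vertex sets of $\Delta\ext$ (conjugates inside $A(\Delta)$) and $\Theta\ext$. This union is disjoint because $A(\Delta)\cap A(\Theta)=\{1\}$. Every vertex of the first set commutes with every vertex of the second. Two elements of $A(\Delta)$ commute in $A(\Gamma)$ if and only if they commute in $A(\Delta)$. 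Together these give $\Gamma\ext=\Delta\ext*\Theta\ext$.

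\emph{Induced subgraphs.} If $\Delta\leq\Gamma$, then $A(\Delta)\leq A(\Gamma)$. Every $A(\Delta)$-conjugate of a generator of $\Delta$ is a vertex of $\Gamma\ext$, so inclusion gives an injective vertex map. Commutation is the same relation in both groups, so this map is an induced embedding $\Delta\ext\leq\Gamma\ext$.

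\emph{Connectedness.} Assume $\Gamma$ is connected, so $\Gamma\leq\Gamma\ext$ is connected and so is each translate $g\Gamma g^{-1}$. I would show by induction on word length that every $g\Gamma g^{-1}$ lies in the component of $\Gamma$. Write $g=g's^{\pm1}$ with $s\in V(\Gamma)$. The translates $g'\Gamma g'^{-1}$ and $g'\Gamma g'^{-1}$ conjugated further by $g's^{\pm1}g'^{-1}$ both contain the vertex $g'sg'^{-1}$, because $s$ is fixed by conjugation by $s^{\pm1}$. Hence consecutive translates intersect, and the induction closes.

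\emph{Disjoint union.} If $\Gamma=\Delta\sqcup\Theta$, then $A(\Gamma)=A(\Delta)*A(\Theta)$. In a free product, two commuting elements that are each conjugate into a factor lie in a common conjugate of a factor. Consequently every edge of $\Gamma\ext$ lies in some $g\Delta\ext g^{-1}$ or $g\Theta\ext g^{-1}$. Distinct conjugates $gA(\Delta)g^{-1}\neq A(\Delta)$ intersect trivially, so these pieces are pairwise vertex-disjoint. There are infinitely many of them, since $A(\Delta)$ has infinite index. To obtain components isomorphic to $\Delta\ext$ or $\Theta\ext$, I would first decompose $\Gamma$ into its connected components, apply the previous step to each, and regroup the components accordingly.

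\emph{Diameter.} If $\Gamma$ is a non-trivial join, the join property gives diameter at most $2$, and a single vertex is trivial. The converse is the hard part and I expect it to be the main obstacle. Suppose $\Gamma$ is connected, not a join, and has at least two vertices. I would pick a cyclically reduced $g$ with $\supp(g)=A(\Gamma)$ and show that $d(v,g^nvg^{-n})\to\infty$. Suppose instead that there is a path $v=x_0,x_1,\dots,x_D=g^nvg^{-n}$ of bounded length $D$. Each $x_i$ commutes with $x_{i+1}$. By Theorem~\ref{thm:centraliser}, each $x_{i+1}$ then lies in $\supp(x_i)\times\supp(x_i)^\perp$, which has type contained in the star of a vertex, a proper subgraph because $\Gamma$ is not a join. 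I would try to turn this chain into a factorisation of $g^n$ into $O(D)$ pieces, each lying in a conjugate of $A(\st(w))$ for some vertex $w$. I would then use reduced forms and the fact that the full support of $g$ is not a join to show that the number of such star-pieces needed for $g^n$ grows linearly in $n$. This is a ``star length'' argument, and the delicate point is controlling the conjugators along the path.
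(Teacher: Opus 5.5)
The paper does not prove this lemma. It quotes \cite[Lemma~3.5]{Kim_Koberda} and only remarks that the arguments there do not use finiteness of $\Gamma$. Your arguments for the join, induced-subgraph and connectedness bullets are correct and finiteness-free. The free-product argument for the disjoint-union bullet is also correct, but look at what it actually proves. It shows that $\Gamma\ext$ is a disjoint union, with no edges between different pieces, of infinitely many translates of $\Delta\ext$ and $\Theta\ext$. Applied to the components $\Gamma_i$ of $\Gamma$ together with connectedness, it also shows that the components of $\Gamma\ext$ are translates of the $\Gamma_i\ext$. No ``regrouping'' turns components into copies of $\Delta\ext$ when $\Delta$ is disconnected. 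For instance, take an edge plus two isolated vertices, with $\Delta$ the edge plus one vertex: the single-edge components are isomorphic to neither $\Delta\ext$ nor $\Theta\ext$. So the bullet must be read with $\Delta,\Theta$ connected, or with ``pieces'' in place of ``components''.

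\textbf{The gap: the ``only if'' direction of the last bullet is not proved.} You leave it as a plan. The star-length strategy is the right one, but two ingredients are missing.

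\emph{First ingredient: use standard star subgroups.} Pieces lying in \emph{conjugates} of star subgroups are not confined to the letters of a proper star, so a reduced-form count cannot control them; this is exactly the conjugator problem you ran into. Standard star subgroups come for free:
\begin{itemize}
\item If $x_i=h_iu_ih_i^{-1}$ and $x_{i+1}$ commutes with $x_i$, then $h_i^{-1}x_{i+1}h_i\in A(\st(u_i))$.
\item By Remark~\ref{rem:parabolic}, this element equals $a_iu_{i+1}a_i^{-1}$ for some $a_i\in A(\st(u_i))$.
\item Set $h_0=1$, $u_0=v$ and $h_{i+1}=h_ia_i$. Then $h_D^{-1}g^n\in A(\st(v))$, so
\[g^n\in A(\st(u_0))\cdots A(\st(u_{D-1}))A(\st(v)).\]
\end{itemize}

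\emph{Second ingredient, and the real content: such a product of $D+1$ standard star subgroups cannot contain $g^n$ for $n$ large.} One way to show this:
\begin{itemize}
\item Take $g=v_1\cdots v_N$, using each vertex once, so that $w^n$ is reduced.
\item Choose a factorisation $g^n=b_0\cdots b_D$ with $b_j\in A(\st(u_j))$ minimising $\sum_j|b_j|$. It has no cancellation between blocks, since a cancelling pair would lie in two different blocks and could be deleted from both.
\item Hence concatenating reduced words for the $b_j$ gives a reduced word for $g^n$. It differs from $w^n$ by exchange moves only.
\item Since $\Gamma$ is not a join, its complement graph is connected. A chain of pairwise non-commuting letters through successive copies of $w$ then shows that every letter of the $k$-th copy stays to the left of every letter in copies of index at least $k+N$.
\item Block $j$ avoids some vertex $z_j\notin\st(u_j)$; stars are proper because $\Gamma$ is not a join.
\item An occurrence of $z_j$ in copy $k+N$ would be trapped between letters of block $j$ from copies $k$ and $k'\geq k+2N$, which is impossible.
\item So each block meets at most $2N$ consecutive copies, and $n\leq 2N(D+1)$, i.e.\ $d(v,g^nvg^{-n})\to\infty$.
\end{itemize}
Without an argument of this kind, the last bullet remains unproved.
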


We also record the main result of Kim and Koberda's article \cite{Kim_Koberda}.
\medskip

\begin{thm}[{\cite[Theorem~1.3 and Theorem~1.4]{Kim_Koberda}}]
\label{thm:main_Kim_Koberda}Let $\Delta$, $\Gamma$ be finite graphs. If $\Delta\leq \Gamma\ext$, then $A(\Delta)\leq A(\Gamma)$. Conversely, if $A(\Delta)\leq A(\Gamma)$, then $\Delta\leq \Gamma\ext_k$.
\end{thm}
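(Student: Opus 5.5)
The statement just above is Theorem~\ref{thm:main_Kim_Koberda}, which is quoted from \cite{Kim_Koberda}, so the plan is to reconstruct Kim and Koberda's two arguments.

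\emph{Direction $\Delta\leq\Gamma\ext\Rightarrow A(\Delta)\leq A(\Gamma)$.} Each vertex of $\Delta$ is a conjugate $g_u u' g_u^{-1}$ of a standard generator, and adjacency in $\Delta$ means commutation. So the assignment $u\mapsto g_uu'g_u^{-1}$ defines a homomorphism $A(\Delta)\to A(\Gamma)$. It need not be injective, so I would replace the images by a large power $N$ and prove that the resulting map is injective. I would do this by a ping-pong argument on the action of $A(\Gamma)$ on $\Gamma\ext$, or equivalently on its universal cover $\mathrm{CAT}(0)$ cube complex. For each vertex $x$ of $\Delta$, with image $x^N$, I would define a ping-pong set $X_x$: the set of group elements whose reduced form, after a fixed conjugation, starts with a high power of the corresponding generator. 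The aim is to show that a reduced word in $A(\Delta)$, in normal form up to commutations of adjacent vertices, maps to a nontrivial element. The key point is that a syllable $x^{kN}$ maps $X_y$ into $X_x$ whenever $x,y$ are non-adjacent in $\Delta$. This holds because non-adjacent vertices of $\Gamma\ext$ are non-commuting conjugates, so their supports generate a free product locally, and high powers ``win'' (Kim--Koberda's key lemma, or a Koberda-style ping-pong for RAAGs).

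An alternative route, closer to the extension-graph description, is induction using the nested sequence of finite graphs obtained by doubling along stars \cite[Lemma~3.1]{Kim_Koberda}. Since $\Delta$ is finite, it lies in some finite stage $\Gamma_m$. The doubling of $\Gamma_m$ along $\st(v)$ yields $A(\Gamma_{m+1})$, which embeds in $A(\Gamma_m)$ as the kernel of the map sending $v$ to $\ZZ/2$ (an index-two subgroup). By induction, $A(\Gamma_m)\leq A(\Gamma)$, and $\Delta\leq\Gamma_m$ gives $A(\Delta)\leq A(\Gamma_m)$. I would take this second route: it is cleaner, and its main content is the computation identifying the index-two subgroup with $A(\mathrm{double})$, which follows from Reidemeister--Schreier.

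\emph{Converse direction $A(\Delta)\leq A(\Gamma)\Rightarrow\Delta\leq\Gamma\ext_k$.} Given an embedding $f$, I would first use Hermiller--Meier reduced forms to conjugate and replace $f(u)$ by a suitable power. The goal is for each $f(u)$ to become cyclically reduced, with its support decomposing as a product $P_1\times\dots\times P_n$ as in Theorem~\ref{thm:centraliser}. I would then choose, for each $u$, a factor of $\supp(f(u)^N)$: either a single vertex-conjugate, or, when the support is abelian, the clique of all vertex-conjugates in it. The clique is chosen so that commutation of $f(u)$ and $f(v)$ is detected on these cliques. By the centraliser theorem, commuting elements have commuting supports in the appropriate sense, so adjacency is preserved. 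For non-adjacent $u,v$ the chosen cliques must not lie in a common clique, since otherwise high powers would generate an abelian group. The hard part is choosing the cliques consistently so that they are distinct for distinct vertices and non-adjacency is reflected. I expect this to be the main obstacle, and I would handle it by a genericity argument: perturb the choice by conjugating with a large power of an independent element, so that non-commuting pairs stay non-commuting.
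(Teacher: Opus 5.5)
The paper does not prove this statement; it quotes it from Kim--Koberda. Your first direction is fine. A finite $\Delta\leq\Gamma\ext$ lies, as an induced subgraph, in some finite stage $\Gamma_m$ of the doubling sequence of \cite[Lemma~3.1]{Kim_Koberda}. Reidemeister--Schreier identifies the kernel of $A(\Gamma_i)\to\ZZ/2$ (sending $v$ to $1$ and the other generators to $0$) with $A$ of the double of $\Gamma_i$ along $\st(v)$. Hence $A(\Delta)\leq A(\Gamma_m)\leq\dots\leq A(\Gamma)$. This is the doubling argument, not the high-powers argument from which the paper extracts Theorem~\ref{thm:powers_Kim_Koberda}, but it suffices here. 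The converse, however, has a genuine gap. Its whole content is to construct cliques $K_u$ of $\Gamma\ext$, one per vertex $u$ of $\Delta$, such that $u\mapsto K_u$ is injective and $K_u\cup K_v$ spans a clique exactly when $u,v$ are adjacent. You correctly name this as the main obstacle, but nothing in the proposal produces such an assignment.

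Concretely, the steps you indicate fail as follows.

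(1) Commuting elements need not have commuting supports. Theorem~\ref{thm:centraliser} only gives $\supp(f(v))\subseteq\supp(f(u))\times\supp(f(u))^\perp$, i.e.\ the supports normalise each other. For example, take $f(u)=g$ and $f(v)=gz$ with $\supp(g)$ non-abelian and $z\in\supp(g)^\perp$. Vertex-conjugates chosen independently in the shared non-abelian factor then need not be adjacent or equal. The choice must be coordinated, e.g.\ made as a function of the root $g_i$ of the component in each factor $P_i$, which commuting images share.

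(2) Any rule depending only on supports destroys non-adjacency and injectivity. If $a,b$ are non-adjacent vertices of $\Gamma$, then $ab$ and $ab^{-1}$ do not commute but have the same irreducible support $\langle a,b\rangle$, so they would receive the same clique. Your reason that non-adjacent vertices get non-adjacent cliques (``otherwise high powers would generate an abelian group'') only works when $f(u),f(v)$ lie in the abelian subgroups generated by the chosen cliques. That fails precisely for non-abelian factors.

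(3) The proposed fix does nothing. Conjugating all chosen cliques by one element is an automorphism of $\Gamma\ext$. Conjugating them separately destroys the adjacencies from (1). A real argument is needed, for instance: for each root in a non-abelian factor $P$, choose a vertex of $\xtype(P)$ avoiding finitely many obstructions coming from the other images, and prove such a vertex exists. Compare Lemma~\ref{lemma:xtype}, where the paper makes this kind of choice and needs the absence of induced squares to control the obstructions; the statement here has no such hypothesis.

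(4) Even abelian supports do not give injectivity. With $a,b$ adjacent, $f(u)=ab$ and $f(v)=ab^2$ have the same support. Such vertices (which have the same star in $\Delta$) must be sent to distinct sub-cliques, and you must check that every non-adjacency survives this shrinking.
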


\begin{rem}
\label{rem:infinite_Kim_Koberda}
Note that the assumption in Theorem~\ref{thm:main_Kim_Koberda} that $\Gamma$ is finite is unnecessary. Indeed, assume $\Delta$ is finite but $\Gamma$ is not. If $\Delta\leq \Gamma\ext$, then each vertex of $\Delta$ corresponds to an element of $A(\Gamma)$ (conjugate to a standard generator). These finitely many elements are all contained in a finitely generated parabolic subgroup $A(\Gamma')$ with $\Gamma'\leq \Gamma$ finite. This means that $\Delta\leq \Gamma'\ext$. By Theorem~\ref{thm:main_Kim_Koberda}, $A(\Delta)\leq A(\Gamma')\leq A(\Gamma)$.

Conversely, if $A(\Delta)\leq A(\Gamma)$, then finitely many elements of $A(\Gamma)$ generate a subgroup isomorphic to $A(\Delta)$. These finitely many elements are all contained in a finitely generated parabolic subgroup $A(\Gamma')$ with $\Gamma'\leq \Gamma$ finite. This means that $A(\Delta)\leq A(\Gamma')$. By Theorem~\ref{thm:main_Kim_Koberda}, we have $\Delta\leq \Gamma'\ext_k$. By Lemma~\ref{lem:extension_properties}, we have $\Gamma'\ext\leq\Gamma\ext$, and hence $\Gamma'\ext_k\leq \Gamma\ext_k$ (Remark~\ref{rem:clique-graph}). Altogether $\Delta\leq\Gamma\ext_k$.
\end{rem}

Observation of the proof of \cite[Theorem~1.3]{Kim_Koberda} allows easily to extract a more precise statement, which we will use in Section~\ref{section:graph_to_algebraic}.

\begin{thm}[from {\cite[page 513, first proof of Theorem~1.3]{Kim_Koberda}}]
\label{thm:powers_Kim_Koberda}Let $\Gamma$ be a graph, and $\Delta$ a finite induced subgraph of $\Gamma\ext$. Then, there exists an integer $M>0$ such that the map
\[\begin{aligned}
V(\Delta) &\to A(\Gamma)\\
g &\mapsto g^M
\end{aligned}\]
extends to an embedding $A(\Delta)\hookrightarrow A(\Gamma)$.
\end{thm}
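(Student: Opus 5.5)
The proof of Theorem~\ref{thm:powers_Kim_Koberda} amounts to rereading Kim--Koberda's argument with attention to what it actually produces. First I reduce to $\Gamma$ finite, as in Remark~\ref{rem:infinite_Kim_Koberda}. The finitely many vertices of $\Delta$ are conjugates of standard generators, so they all lie in some $A(\Gamma')$ with $\Gamma'\leq\Gamma$ finite. Then $\Delta\leq\Gamma'\ext$, since adjacency in the extension graph is just commutation. An embedding into $A(\Gamma')$ is also an embedding into $A(\Gamma)$, so it suffices to treat $\Gamma'$.

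For $\Gamma$ finite, I follow the first proof of \cite[Theorem~1.3]{Kim_Koberda}, which is a ping-pong argument. Every vertex of $\Delta$ is an element $x=gvg^{-1}$, conjugate to a standard generator $v\in V(\Gamma)$. Such an element acts on a suitable space (the extension graph or the universal cover cube complex, via the HNN-type splittings $A(\Gamma)\simeq A(\Gamma\setminus\{v\})\ast_{A(\lk_\Gamma(v))}$ and their conjugates). Its large powers move everything outside a ``fixed'' region into a ping-pong set $X_x$.

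The argument then runs in three steps.
\begin{itemize}
\item Assign to each vertex $x\in V(\Delta)$ a set $X_x$ on which commuting vertices act compatibly.
\item Show there is a uniform $M$ such that, for non-adjacent $x,y\in V(\Delta)$ and every nonzero $k$, we have $x^{kM}(X_y)\subseteq X_x$.
\item Check that adjacent vertices preserve each other's sets.
\end{itemize}
A ping-pong lemma for RAAGs then gives that every nontrivial reduced word in the $x^M$ acts nontrivially. Hence $x\mapsto x^M$ extends to an injective homomorphism $A(\Delta)\to A(\Gamma)$. Since $\Delta$ is induced, commuting vertices of $\Delta$ are exactly adjacent ones, so the homomorphism is well defined.

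The main point to extract is that the exponent needed in the ping-pong step is uniform. This holds because $\Delta$ is finite: there are finitely many pairs $(x,y)$, and each needs only a finite threshold exponent. Taking $M$ to be a common multiple (or simply the maximum threshold, if all larger exponents also work) gives a single $M$ for all vertices.

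The main obstacle is verifying that Kim--Koberda's proof really yields the literal map $x\mapsto x^M$ on the chosen conjugates. One must check that it does not, say, replace vertices by other conjugates or rely on non-uniform exponents. This requires inspecting their construction on page 513.
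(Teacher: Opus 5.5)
Your proposal takes essentially the same route as the paper, which gives no independent argument and simply extracts the statement by inspecting the first proof of Theorem~1.3 in \cite{Kim_Koberda}, exactly as you propose. Your reduction to finite $\Gamma$ mirrors Remark~\ref{rem:infinite_Kim_Koberda}. The point you single out as the main obstacle, namely that the construction yields the literal map $x\mapsto x^M$ with a single uniform $M$, is precisely what the paper asserts can be read off from that proof.
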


\section{Properties of extension graphs}
\label{section:extension_graphs}
In this section, we establish further properties of extension graphs, proving auxiliary lemmas for the next sections.

\subsection{Induced squares}

From Theorem~\ref{thm:main_Kim_Koberda}, Kim and Koberda recover the following result due to Kambites \cite{Kambites}, which we state for a possibly infinite graph for generality. We also add a remark about the clique graph.

\begin{cor}
\label{cor:square_F2xF2}Let $\Gamma$ be a graph. The following are equivalent:
\begin{enumerate}
    \item\label{square1} $A(\Gamma)$ contains $F_2\times F_2$.
    \item\label{square2} $\Gamma\ext$ contains an induced square.
    \item\label{square3} $\Gamma$ contains an induced square.
    \item\label{square4} $\Gamma_k$ contains an induced square.
\end{enumerate}
More precisely, for every induced square of $\Gamma_k$ (whose vertices correspond to cliques of $\Gamma$), there is a choice of one vertex in each clique such that these four vertices span an induced square of $\Gamma$.
\end{cor}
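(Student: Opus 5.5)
The plan is to close a cycle of implications (\ref{square3})$\Rightarrow$(\ref{square2})$\Rightarrow$(\ref{square1})$\Rightarrow$(\ref{square3}), and then connect (\ref{square4}) with (\ref{square3}) through the ``more precisely'' statement.

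The implication (\ref{square3})$\Rightarrow$(\ref{square2}) is immediate from $\Gamma\leq\Gamma\ext$. For (\ref{square2})$\Rightarrow$(\ref{square1}), let $Q$ denote the square graph, so that $A(Q)=F_2\times F_2$. If $Q\leq\Gamma\ext$, then Theorem~\ref{thm:main_Kim_Koberda} (in the version of Remark~\ref{rem:infinite_Kim_Koberda}, which allows infinite $\Gamma$) gives $F_2\times F_2\leq A(\Gamma)$.

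For (\ref{square1})$\Rightarrow$(\ref{square3}), apply the converse half of Remark~\ref{rem:infinite_Kim_Koberda}: it yields $Q\leq\Gamma\ext_k$. Next I would check that $\Gamma\ext_k$ is again a clique graph, namely that of $\Gamma\ext$. So if I prove the refined ``more precisely'' statement for an arbitrary graph, I can apply it to $\Gamma\ext$ and get an induced square in $\Gamma\ext$. It then remains to pass from an induced square in $\Gamma\ext$ to one in $\Gamma$. This is the main obstacle: I do not see a purely combinatorial route, since it amounts to Kambites' theorem. I would argue algebraically. Four vertices of $\Gamma\ext$ spanning a square are conjugates of standard generators, and they lie in a finitely generated parabolic subgroup $A(\Gamma')$ with $\Gamma'\leq\Gamma$ finite. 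I would then use the description of $\Gamma\ext$ as an increasing union of finite graphs, each obtained by doubling the previous one along the star of a vertex (\cite[Lemma~3.1]{Kim_Koberda}). The square therefore appears in some finite stage. So it suffices to show that doubling a square-free graph $\Lambda$ along $\st(v)$ produces no induced square.

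To prove that doubling claim, suppose $\Lambda$ is square-free and a square $abcd$ appears in $\Lambda\cup_{\st(v)}\Lambda'$. The square cannot lie entirely in one copy. Since there are no edges between $\Lambda\setminus\st(v)$ and $\Lambda'\setminus\st(v)'$, two opposite vertices, say $a$ and $c$, must lie in distinct copies outside $\st(v)$. Then $b$ and $d$ lie in $\st(v)$. Neither can equal $v$, because $v$ is adjacent only to vertices of $\st(v)$. So $b,d\in\lk(v)$, and $b,d$ are non-adjacent. Reflecting $c$ back into $\Lambda$ gives a vertex $c''\neq a$. If $c''$ is not adjacent to $a$, then $a\,b\,c''\,d$ is an induced square in $\Lambda$. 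Otherwise $a$ and $c''$ are adjacent, and in that case $a\,b\,v\,d$ is an induced square in $\Lambda$. Either way this contradicts square-freeness, which closes the cycle of implications.

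Finally I prove the refined statement for an arbitrary graph, which also gives (\ref{square3})$\Leftrightarrow$(\ref{square4}). The direction (\ref{square3})$\Rightarrow$(\ref{square4}) follows from $\Gamma\leq\Gamma_k$ (Remark~\ref{rem:clique-graph}). For the converse, let $K_1K_2K_3K_4$ be an induced square in $\Gamma_k$, labelled cyclically. Since $K_1,K_3$ are not adjacent, $K_1\cup K_3$ is not a clique, so there exist $a\in K_1$ and $c\in K_3$ that are distinct and non-adjacent. Similarly choose distinct non-adjacent $b\in K_2$ and $d\in K_4$. Consecutive cliques lie in a common clique, so $a,b,c,d$ are adjacent cyclically. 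The only remaining concern is that a vertex could be repeated, for instance $a=b$. But then $a\in K_2$, and so $K_1\cup K_3\subseteq\st$, with $c\in K_3$ adjacent to $b=a$, contradicting the non-adjacency of $a$ and $c$. Hence the four vertices are distinct and span an induced square of $\Gamma$.
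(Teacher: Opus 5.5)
Your proof is correct. The clique-graph part, i.e.\ (\ref{square4})$\Rightarrow$(\ref{square3}) and the ``more precisely'' statement, is the paper's argument. The one difference there is that you rule out $a=b$ using $c$ where the paper uses $d$: since $K_2\cup K_3$ lies in a clique, $c$ is equal or adjacent to $b=a$. Your phrase ``$K_1\cup K_3\subseteq\st$'' is garbled, but the intended reasoning is fine.

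The real difference lies in the algebraic implications. The paper reduces to a finite $\Gamma'\leq\Gamma$ and cites two specific results of Kim--Koberda as black boxes:
\begin{itemize}
\item Corollary~4.4, for $F_2\times F_2\leq A(\Gamma')\Rightarrow$ induced square in $\Gamma'\ext$;
\item Lemma~3.9(2), for induced square in $\Gamma'\ext\Rightarrow$ induced square in $\Gamma'$.
\end{itemize}
It then gets (\ref{square3})$\Rightarrow$(\ref{square1}) trivially from the parabolic embedding $A(Q)\leq A(\Gamma)$.

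You use only the two directions of Theorem~\ref{thm:main_Kim_Koberda}, already quoted in the paper. The converse direction gives $Q\leq\Gamma\ext_k$. Applying the clique-graph square statement to the graph $\Gamma\ext$ (rather than to $\Gamma$) then yields an induced square of $\Gamma\ext$, so the ``more precisely'' part does genuine work in your cycle. You then replace Lemma~3.9(2) with an explicit proof that doubling along a star preserves square-freeness, combined with the description of $\Gamma'\ext$ as an increasing union of iterated doubles. Your route is more self-contained and essentially reconstructs how Kambites' theorem is recovered from Theorem~\ref{thm:main_Kim_Koberda}. The paper's version is shorter, at the cost of citing more.

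There is one slip in the doubling step. You assert that the reflection $c''$ of $c$ differs from $a$, but this need not hold, since $c$ may be the copy of $a$. This is harmless. Once you know $a\in\Lambda\setminus\st(v)$, $b,d\in\lk(v)$, $a\sim b$, $a\sim d$ and $b\not\sim d$, the vertices $a,b,v,d$ always span an induced square of $\Lambda$. The vertex $c$ is needed only to force $b,d\in\st(v)$, so you can drop the reflection and the case distinction entirely.
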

\begin{proof}
If (\ref{square1}) holds, like in Remark~\ref{rem:infinite_Kim_Koberda}, some $A(\Gamma')$ contains $F_2\times F_2$, for a finite $\Gamma'\leq \Gamma$. By \cite[Corollary~4.4]{Kim_Koberda}, $\Gamma'\ext$ contains an induced square. Thus, so does $\Gamma\ext\geq \Gamma'\ext$ and (\ref{square2}) holds.

If (\ref{square2}) holds, like in Remark~\ref{rem:infinite_Kim_Koberda}, some $\Gamma'\ext$ contains an induced square, for a finite $\Gamma'\leq \Gamma$. By \cite[Lemma~3.9~(2)]{Kim_Koberda}, $\Gamma'$ contains an induced square. Thus, so does $\Gamma$ and (\ref{square3}) holds.

Now, (\ref{square3})$\Rightarrow$(\ref{square1}) comes directly from the fact that $\Delta\leq \Gamma$ implies $A(\Delta)\leq A(\Gamma)$.

For (\ref{square3})$\Rightarrow$(\ref{square4}), note that if four vertices $v_1,v_2,v_3,v_4$ induce a square in $\Gamma$, then the one-element cliques $\{v_1\},\{v_2\},\{v_3\},\{v_4\}$ induce a square in $\Gamma_k$.

Finally, for (\ref{square4})$\Rightarrow$(\ref{square3}), let $K_1$, $K_2$, $K_3$, $K_4$ be cliques of $\Gamma$ spanning a square in $\Gamma_k$ in that order. Since $K_1$ and $K_3$ (resp. $K_2$ and $K_4$) are not adjacent in $\Gamma_k$, there exist $x_1\in V(K_1)$ and $x_3\in V(K_3)$ (resp. $x_2\in V(K_2)$ and $x_4\in V(K_4)$) which are distinct and not adjacent. The vertices $x_1$ and $x_2$ of $\Gamma$ cannot be equal, because $x_1$ is equal or adjacent to $x_4$ and $x_2$ is not. Therefore $x_1$ and $x_2$ are adjacent. By the same argument for the three remaining pairs, $x_1$, $x_2$, $x_3$, $x_4$ induce a square of $\Gamma$. This shows that (\ref{square4})$\Rightarrow$(\ref{square3}).
\end{proof}

\subsection{Extended types}

\begin{lem}
\label{lem:extension_cliques}Let $\Gamma$ be a graph. Every finite clique in $\Gamma\ext$ is of the form $gKg^{-1}$ for some $g\in A(\Gamma)$ and some clique $K$ of $\Gamma$.
\end{lem}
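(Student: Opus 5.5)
We must show that every finite clique $\{h_1v_1h_1^{-1},\dots,h_nv_nh_n^{-1}\}$ of $\Gamma\ext$ is of the form $gKg^{-1}$ with $K$ a clique of $\Gamma$. The plan is to argue by induction on $n$ through parabolic subgroups and the centraliser theorem. The case $n=1$ is immediate. Suppose the statement holds for cliques of size $n-1$. Conjugating by $g^{-1}$, where $\{h_1v_1h_1^{-1},\dots,h_{n-1}v_{n-1}h_{n-1}^{-1}\}=gK'g^{-1}$, we may assume that the first $n-1$ vertices are the standard generators of a clique $K'$ of $\Gamma$. The remaining vertex is then some $x=hvh^{-1}$ that commutes with every $u\in V(K')$ and is distinct from each of them. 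We must find $c\in A(\Gamma)$ with $cuc^{-1}=u$ for all $u\in K'$ and $cxc^{-1}$ a standard generator $w$. Then $K=K'\cup\{w\}$ is a clique of $\Gamma$: the generators commute, and $w\notin K'$ because $x\neq u$ and distinct standard generators are not conjugate.

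Since $x$ centralises each $u\in K'$, Theorem~\ref{thm:centraliser} (in the special case $Z_{A(\Gamma)}(u)=A(\st_\Gamma(u))$) gives $x\in\bigcap_{u\in K'}A(\st_\Gamma(u))=A(\Lambda)$, where $\Lambda=\bigcap_{u\in K'}\st_\Gamma(u)$, an induced subgraph that equals $K'*(K')^\perp$. Now $\supp(x)=hA(\{v\})h^{-1}$ is a parabolic subgroup of $A(\Gamma)$ contained in $A(\Lambda)$. By Remark~\ref{rem:parabolic} (the Antolín--Minasyan lemma), there is $c'\in A(\Lambda)$ with $\langle x\rangle=c'\langle v\rangle c'^{-1}$, so $x=c'v^{\pm1}c'^{-1}$. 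The sign must be $+$: abelianising shows that $v$ is not conjugate to $v^{-1}$, and $x$ is conjugate to $v$.

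It remains to remove the effect of $c'$ on $K'$. Since $A(\Lambda)=A(K')\times A((K')^\perp)$, write $c'=ab$ with $a\in A(K')$ and $b\in A((K')^\perp)$. If $v\in K'$, then $x=c'vc'^{-1}=v$ because $A(K')$ is abelian and $b$ commutes with $K'$; this contradicts $x\neq v$, so $v\in (K')^\perp$. Then $a$ commutes with $v$ and with $b$, hence $x=bvb^{-1}$, and $b$ commutes with every $u\in K'$. Taking $c=b^{-1}$ gives $cxc^{-1}=v$ while fixing $K'$ pointwise. Undoing the initial conjugation, the original clique equals $(gb)(K'\cup\{v\})(gb)^{-1}$.

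The main point requiring care is the step that places $x$ inside $A(\Lambda)$ and then conjugates within $A(\Lambda)$ rather than in all of $A(\Gamma)$. This is exactly the content of Remark~\ref{rem:parabolic}, combined with the identity $\bigcap_u A(\st(u))=A(\bigcap_u \st(u))$, which follows from $A(\Delta_1)\cap A(\Delta_2)=A(\Delta_1\cap\Delta_2)$. The argument does not use finiteness of $\Gamma$, and the decomposition of $A(\Lambda)$ as $A(K')\times A((K')^\perp)$ holds because $\Lambda=K'*(K')^\perp$.
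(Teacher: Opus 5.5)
Your proof is correct and follows essentially the same route as the paper: induction on the size of the clique, then the centraliser theorem to place the new vertex $x$ in $A(\Lambda)$ with $\Lambda=\bigcap_{u}\st_\Gamma(u)$, then a conjugation inside $A(\Lambda)$ that sends $x$ to a standard generator while fixing $K'$. The only differences are minor. The paper splits the conjugator as $g_1g_2$ ($g_1$ centralising $K'$, $g_2$ centralising $v$) by citing \cite[Proposition~2.1(2)]{CCV}, whereas you obtain the same conclusion from the Antolín--Minasyan lemma of Remark~\ref{rem:parabolic}. Also, your splitting $c'=ab$ is unnecessary, since $A(\Lambda)=\bigcap_u Z_{A(\Gamma)}(u)$ already centralises $K'$, so $c=c'^{-1}$ works directly.
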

\begin{proof}
The proof is by induction on the size of the clique. It is clear for the empty clique. Consider a clique with vertices $v_0,\dots, v_n \in V(\Gamma\ext)$, and assume by the inductive hypothesis, up to conjugating by some element of $G$, that $v_1,\dots, v_n$ are in $V(\Gamma)$. Write $v_0=gvg^{-1}$ with $v\in V(\Gamma)$ and $g\in A(\Gamma)$. By Theorem~\ref{thm:centraliser}, $gvg^{-1}$ is in \[\displaystyle\bigcap_{i=1}^n A(\st_\Gamma(v_i)) = A\left(\bigcap_{i=1}^n \st_\Gamma(v_i)\right).\] By \cite[Proposition~2.1(2)]{CCV}, applied with $\Theta=\{v\}$ and $\Lambda=\displaystyle\bigcap_{i=1}^n\st_\Gamma(v_i)$, the vertex $v$ is adjacent to $v_1,\dots,v_n$, and we have $g=g_1g_2$ for some $g_1$ commuting with $v_1,\dots,v_n$, and some $g_2$ commuting with $v$. It follows that $gvg^{-1}=g_1vg_1^{-1}$, and conjugating the whole clique by $g_1^{-1}$ fixes $v_1,\dots,v_n$ and maps $v_0$ to $v\in V(\Gamma)$, completing the inductive step.
\end{proof}

\begin{rem}
\label{rem:extension_colour}As a consequence of Lemma~\ref{lem:extension_cliques}, $\Gamma\ext$ is covered by the copies of $\Gamma$ given by $g\Gamma g^{-1}$ for $g\in A(\Gamma)$. Moreover, every $n$-colouration of $\Gamma$ induces an $n$-colouration of $\Gamma\ext$ since distinct conjugates of the same standard generator are never adjacent (this statement already appeared in \cite[Lemma~3.5]{Kim_Koberda}).
\end{rem}

We will need the following definition, adapting the notion of type of a parabolic subgroup to the extension graph. 
\begin{defn}
\label{def:xtype}Let $\Gamma$ be a graph, and let $P$ be a parabolic subgroup of $A(\Gamma)$. The \deffont{extended type} of $P$, denoted by $\xtype(P)$, is the subgraph of $\Gamma\ext$ spanned by all vertices associated with elements of $A(\Gamma)$ that are contained in $P$.
\end{defn}

\begin{rem}
Write $P=gA(\Delta)g^{-1}$ for some $\Delta\leq\Gamma$ and some $g\in A(\Gamma)$. In view of Remark~\ref{rem:parabolic}, cyclic parabolic subgroups of $A(\Gamma)$ contained in $P$ are exactly the subgroups of the form $gZg^{-1}$, where $Z$ is a cyclic parabolic subgroup of $A(\Delta)$, i.e.\ $Z=\langle hvh^{-1}\rangle$ for some $h\in A(\Delta)$ and some $v\in V(\Delta)$. In particular $\Delta\ext\leq\Gamma\ext$ by sending $hvh^{-1}$ to itself, and $\xtype(P)=g\Delta\ext g^{-1}$.  
\end{rem}

\begin{rem}
Keping the notations from the previous remark, in the special case where $P$ is abelian of finite rank, $\Delta$ is a clique and $\Delta\ext = \Delta$. Note that, by Lemma~\ref{lem:extension_cliques}, the extended type establishes a one-to-one correspondence between finite cliques of $\Gamma\ext$ and finite-rank abelian parabolic subgroups of $A(\Gamma)$.
\end{rem}

The following lemma will be used in Section~\ref{section:me_to_graph} for our proof of the implication (\ref{main-1})$\Rightarrow$(\ref{main-4}) of Theorem~\ref{thm:main}.

\begin{lem}
\label{lemma:xtype}Let $\Gamma$ be a graph without induced squares and let $Q$ be a finitely generated abelian parabolic subgroup of $A(\Gamma)$ such that $Q^\perp$ is non-abelian.

Let $(x_u)_{u \in U}$ be a finite family of vertices of $\Gamma\ext \setminus \xtype(Q\times Z(Q^\perp))$. Let $n$ be an integer. 

Then there exist vertices $v_1, \dots, v_n$ of $\xtype(Q^\perp)\setminus\xtype(Z(Q^\perp))$ such that
\begin{itemize}
    \item For $1\leq i<j\leq n$, the vertices $v_i$ and $v_j$ are distinct and non-adjacent. 
    \item For $1\leq i\leq n$ and $u\in U$, the vertices $v_i$ and $x_u$ are distinct and non-adjacent.
\end{itemize}
\end{lem}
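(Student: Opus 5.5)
The plan is to conjugate everything into standard position, isolate a join-irreducible factor of $Q^\perp$, and produce the vertices $v_i$ as conjugates of one standard generator by large, well-separated powers of a single element.

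\emph{Normalisation.} Since the statement is invariant under the action of $A(\Gamma)$ on $\Gamma\ext$, I first conjugate so that $Q=A(K)$ for a finite clique $K\leq\Gamma$; then $Q^\perp=A(K^\perp)$. Decompose $K^\perp=C\ast\Lambda$, where $C$ is the clique of vertices of $K^\perp$ adjacent to all other vertices of $K^\perp$, so that $Z(Q^\perp)=A(C)$.

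\emph{Structure of $\Lambda$.} I would check that $\Lambda$ is join-irreducible with at least two vertices, using square-freeness. Indeed, $\Lambda$ is non-empty and not a clique, because $Q^\perp$ is non-abelian. Every factor of its maximal join decomposition that is not a single vertex contains a non-edge. A single-vertex factor would lie in $C$. Two factors each containing a non-edge would span an induced square.

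Also using square-freeness, I would check that $\Lambda^\perp\subseteq K\cup C$. Suppose $y\in\Lambda^\perp$ lies outside $K\cup C$. Then $y$ is not adjacent to some $k\in K$; otherwise $y\in K^\perp$, hence $y\in C$. Pick non-adjacent $a,b\in\Lambda$; then $y,a,k,b$ span an induced square. The case $K=\emptyset$ is similar.

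Consequently $\xtype(Q^\perp)\setminus\xtype(Z(Q^\perp))$ contains all conjugates $hwh^{-1}$ with $h\in A(K^\perp)$ and $w\in V(\Lambda)$. Moreover, each $x_u$ is a vertex of $\Gamma\ext$ that is not in $\xtype(A(K\cup C))\supseteq\xtype(A(\Lambda^\perp))$.

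\emph{Construction.} Fix $w\in V(\Lambda)$, and let $g\in A(\Lambda)$ be the product of all vertices of $\Lambda$, so that $\supp(g)=A(\Lambda)$. Since $\Lambda$ is join-irreducible, Theorem~\ref{thm:centraliser} gives $Z_{A(\Gamma)}(g)=\langle g_0\rangle\times A(\Lambda^\perp)$ with $g_0$ a root of $g$. I would take $v_i=g^{k_i}wg^{-k_i}$ for a sparse increasing sequence $k_1<\dots<k_n$. The two required properties then reduce to a single finiteness statement.

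\emph{Key claim.} For a vertex $x$ of $\Gamma\ext$ not lying in $\xtype(A(\Lambda^\perp))$, the set of $k\in\mathbb{Z}$ such that $g^kwg^{-k}$ equals $x$ or is adjacent to $x$ is finite. This condition is equivalent to $g^{-k}xg^k$ lying in $\st_{\Gamma\ext}(w)$, i.e. commuting with $w$.
\begin{itemize}
\item Applying the claim to $x=x_u$, for each $u$, handles the second bullet.
\item Applying it to $x=g^{k_j}wg^{-k_j}$ handles the first bullet. Non-adjacency of $g^{k_i}wg^{-k_i}$ and $g^{k_j}wg^{-k_j}$ is equivalent to non-adjacency of $w$ and $g^{k_j-k_i}wg^{-(k_j-k_i)}$, and $w\notin\xtype(A(\Lambda^\perp))$ because $w\in\Lambda$.
\item The $k_i$ are then chosen greedily, avoiding finitely many bad differences at each step.
\end{itemize}

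\emph{Proof of the claim, and the main obstacle.} This step is where I expect the difficulty to lie. My first attempt uses reduced forms. Write $x=hyh^{-1}$ with $y\in V(\Gamma)$. If $g^{-k}xg^k$ commutes with $w$ for infinitely many $k$, then, comparing two such values $k<k'$, the elements $x$ and $g^{k'-k}xg^{-(k'-k)}$ both commute with conjugates of $w$ in a controlled way. Taking reduced forms of $g^{-k}h$ for large $k$, the support of $g$ (all of $\Lambda$) must eventually be absorbed, letter by letter, into the star of $w$ or cancelled against $y$. This should be impossible unless $y$ commutes with all of $\Lambda$ and $h$ can be chosen in $Z(g)$, i.e. $x\in\xtype(A(\Lambda^\perp))$.

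If this combinatorial argument becomes messy, the fallback is more geometric. One can use that $g$ acts loxodromically on the extension graph $\Lambda\ext$, which has infinite diameter or is disconnected since $\Lambda$ is join-irreducible. Then apply the standard WPD/acylindricity-type fact that a fixed vertex $w$ and a fixed vertex $x$ outside the centraliser's extended type cannot stay within distance $1$ of each other along an unbounded set of powers of $g$. Projecting $x$ to $A(\Lambda)$, via the parabolic $x$ generates intersected with $A(K^\perp)$, handles the vertices $x$ that lie outside $\Lambda\ext$.
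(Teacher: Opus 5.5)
\textbf{There is a genuine gap: your Key claim carries essentially all the content of the lemma, and it is left unproved.} Your normalisation, the structural facts ($\Lambda$ join-irreducible, $\Lambda^\perp=K\cup C$), and the reduction to a finiteness statement plus a greedy choice are all fine.

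\textbf{Square-freeness is not used where it is needed.} You never use it inside the claim. Consider a vertex $x=x_u$ outside $\xtype(Q^\perp)$.
\begin{itemize}
\item No WPD-type principle on $\Gamma\ext$ helps. In this case $K\neq\emptyset$, and $g$ fixes every vertex of $\Lambda^\perp=K\cup C$. All the $g^kwg^{-k}$ are adjacent to those vertices.
\item Your projection $\langle x\rangle\cap A(K^\perp)$ is trivial for such $x$, so it gives nothing.
\end{itemize}
The meaningful projection is $Z(x)\cap A(\Lambda)$. The missing idea, which is the heart of the paper's proof, is that its extended type is a clique:
\begin{itemize}
\item Since $x_u\notin\xtype(Q)\ast\xtype(Q^\perp)=\xtype(Q\times Q^\perp)$, the vertex $x_u$ does not commute with all of $Q$. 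So some $x_0\in\xtype(Q)$ is distinct from and non-adjacent to $x_u$.
\item Suppose $x_u$ were adjacent to two non-adjacent vertices $a,b$ of $\xtype(A(\Lambda))$. Then $x_0,a,x_u,b$ would span an induced square of $\Gamma\ext$, contradicting Corollary~\ref{cor:square_F2xF2}.
\end{itemize}

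\textbf{The first bullet is elementary.} The vertices $w$ and $g^mwg^{-m}$ are distinct and non-adjacent for every $m\neq0$. Indeed, if they commuted, the argument in the proof of Lemma~\ref{lem:extension_cliques} would give $g^m\in A(\st_\Gamma(w))$. This is impossible by exponent sums, since $\Lambda\not\subseteq\st_\Gamma(w)$. So your first bullet holds for any $k_1<\dots<k_n$, with no loxodromicity needed. Combined with the clique fact, it also shows that each $x_u\notin\xtype(Q^\perp)$ is adjacent to at most one of your candidates.

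\textbf{The case $x_u\in\xtype(A(\Lambda))$ needs an input you do not supply.} Here your orbit construction needs more than the paper uses. You must know that $d_{\Lambda\ext}(w,g^mwg^{-m})\to\infty$, i.e.\ that $g$ acts loxodromically on $\Lambda\ext$ when $\Lambda$ is connected. This is a theorem of Kim--Koberda on extension graphs as curve graphs. Your reduced-form sketch does not prove it, and it is not available in the paper. When $\Lambda$ is disconnected, one should instead check that the $g^mwg^{-m}$ lie in pairwise distinct components.

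\textbf{How the paper avoids this.} It uses no orbit at all. With $Q_1=A(\Lambda)$ and $Z_u=Q_1\cap Z(x_u)$, it shows that each $\xtype(Z_u)$ is bounded in $\xtype(Q_1)$:
\begin{itemize}
\item if $x_u\in\xtype(Q_1)$, it is the star of $x_u$;
\item otherwise, it is a clique, by the square argument above.
\end{itemize}
It then picks $v_1,\dots,v_n$ pairwise at distance at least $2$ and outside these finitely many bounded sets. This only requires that $\xtype(Q_1)$ has infinite diameter (Lemma~\ref{lem:extension_properties}). To close the gap, either replace your orbit by such a choice, or cite the loxodromicity theorem explicitly and add the square argument for $x_u\notin\xtype(Q^\perp)$.
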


\begin{proof}
Since $\Gamma$ has no induced square, if the type of $Q^\perp$ splits non-trivially as a join, then at most one of the join factors can contain two non-adjacent vertices. One can therefore decompose the type of $Q^\perp$ as a join $C\ast \Gamma_1$, where $C$ is a (possibly empty) clique, while $\Gamma_1$ is not a clique and does not decompose non-trivially as a join. This corresponds to a direct product decomposition $Q^\perp=Z(Q^\perp)\times Q_1$. As $Q^\perp$ is non-abelian, $\Gamma_1\neq\emptyset$. If a cyclic, parabolic subgroup of $A(\Gamma)$ is contained in $Q^\perp$, then it is contained either in $Z(Q^\perp)$ or in $Q_1$. This also yields a join decomposition \[\xtype(Q^\perp)=\xtype(Z(Q^\perp))\ast\xtype(Q_1).\]

Our goal is to construct pairwise distinct, pairwise non-adjacent vertices $v_1,\dots,v_n$ of $\xtype(Q_1)$, none of which is equal or adjacent to some $x_u$ with $u\in U$.

For $u\in U$, let $Z_{u}$ be the parabolic subgroup defined as the intersection of $Q_1$ with the centraliser of the cyclic parabolic subgroup associated to $x_u$. We now observe that $\xtype(Z_u)$ has bounded diameter when viewed as a subgraph of $\xtype(Q_1)$. Indeed: 
\begin{itemize}
    \item Assume first that $x_u\in V(\xtype(Q_1))$. Then $Z_u$ is the centraliser of $x_u$ in $Q_1$; in particular $\xtype(Z_u)$ is the star of $x_u$ in $\xtype(Q_1)$, so it has bounded diameter. 
    \item Assume now that $x_u\notin V(\xtype(Q_1))$. We will prove that $\xtype(Z_u)$ does not contain two non-adjacent vertices $x_1,x_2$ of $\xtype(Q_1)$, so assume for the sake of contradiction that it does. Since by assumption $x_u\notin\xtype(Z(Q^\perp))$, we have $x_u\notin\xtype(Q^\perp)$. This implies in particular that $Q^\perp\neq A(\Gamma)$ and hence $Q\neq\{1\}$. We can thus find a vertex $x_0\in\xtype(Q)$. Then $x_0x_1x_ux_2$ is an induced square in $\Gamma\ext$, and by Corollary~\ref{cor:square_F2xF2} there is also an induced square in $\Gamma$, a contradiction.  
\end{itemize}
The graph $\xtype(Q_1)$ has infinite diameter by Lemma~\ref{lem:extension_properties}, while there are only finitely many $u\in U$ giving bounded subsets $\xtype(Z_u)$. We can therefore find pairwise distinct, pairwise non-adjacent vertices $v_1,\dots,v_n\in V(\xtype(Q_1))\setminus\bigcup_{u\in U} V(\xtype(Z_u))$: they are all distinct from and non-adjacent to any of the vertices $x_u$ with $u\in U$.
\end{proof}

\subsection{No universal right-angled Artin group}

Kim and Koberda also use a strengthening of Theorem~\ref{thm:main_Kim_Koberda} in the case where $A(\Gamma)$ has dimension at most two ($\Gamma$ has no triangles) to prove the non-existence of a \emph{universal two-dimensional RAAG}, a finitely generated two-dimensional RAAG containing subgroups isomorphic to all finitely generated two-dimensional RAAGs. Using the same technique, we can obtain a stronger statement, which we quote here.

\begin{lem}\label{lem:universal}Let $R\in\mathbb{N}$. There is no finite graph $\Gamma$ with the following property: for any connected finite graph $\Delta$ without induced $r$-gons for $3\leq r\leq R$, one has $\Delta\leq\Gamma\ext_k$. 
\end{lem}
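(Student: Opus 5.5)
The plan is to use a chromatic number argument, in the spirit of Kim--Koberda's proof that there is no universal two-dimensional RAAG. Since $\Gamma$ is finite, the clique graph $\Gamma\ext_k$ has bounded chromatic number. On the other hand, graphs without short cycles can have arbitrarily large chromatic number. Chromatic number is monotone under taking (induced) subgraphs, so this rules out a universal $\Gamma$.

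\emph{Step 1: colouring $\Gamma\ext$.} Let $N=|V(\Gamma)|$. Colour each vertex $v$ of $\Gamma$ by itself; this is a proper $N$-colouring of $\Gamma$. By Remark~\ref{rem:extension_colour}, it induces a proper $N$-colouring $c$ of $\Gamma\ext$, where a vertex $gvg^{-1}$ receives the colour $v$. In particular, distinct vertices of any clique of $\Gamma\ext$ receive distinct colours.

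\emph{Step 2: colouring $\Gamma\ext_k$.} Colour each vertex of $\Gamma\ext_k$, i.e.\ each non-empty finite clique $K$ of $\Gamma\ext$, by the non-empty subset $c(K)\subseteq V(\Gamma)$. This uses at most $2^N-1$ colours. We check that the colouring is proper. Suppose $K\neq L$ are adjacent in $\Gamma\ext_k$, so $K\cup L$ is contained in a common finite clique $J$ of $\Gamma\ext$ (we may take $J=K\cup L$). By Step~1, $c$ restricted to $J$ is injective. If $c(K)=c(L)$, injectivity forces $K=L$, a contradiction. Hence $\Gamma\ext_k$ has chromatic number at most $2^N-1$, and so does every subgraph of $\Gamma\ext_k$, induced or not.

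\emph{Step 3: graphs of large girth and large chromatic number.} By Erdős' theorem, there is a finite graph $\Delta_0$ of girth greater than $R$ and chromatic number greater than $2^N$. The chromatic number of a graph is the maximum over its connected components, so some component $\Delta$ of $\Delta_0$ is connected, finite, has girth greater than $R$, and has chromatic number greater than $2^N-1$. Girth greater than $R$ means $\Delta$ has no cycle of length $r$ with $3\leq r\leq R$, and in particular no induced $r$-gon for such $r$. Yet $\Delta\not\leq\Gamma\ext_k$ by Step~2. Since $\Gamma$ was an arbitrary finite graph, this proves the lemma.

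The only step requiring care is Step~2. It is the point where passing from $\Gamma\ext$ to the clique graph could a priori destroy colourability. The colour-set trick handles it precisely because adjacency in $\Gamma\ext_k$ means the two cliques lie in a common clique, on which the colouring is injective. Everything else is the classical Erdős input combined with Remark~\ref{rem:extension_colour}.
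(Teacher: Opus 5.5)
Your proposal is correct and follows essentially the same argument as the paper. It colours $\Gamma\ext$ via Remark~\ref{rem:extension_colour}, colours each clique by its set of colours to get a proper $(2^n-1)$-colouring of $\Gamma\ext_k$, and then invokes Erd\H{o}s for a connected finite graph of large girth and chromatic number. Two small differences: you use the trivial colouring with $|V(\Gamma)|$ colours rather than an arbitrary one, and you pass to a connected component explicitly.
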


\begin{proof}
Let $\Gamma$ be any finite graph. Then $\Gamma$ is $n$-colourable for some integer $n$ (at most its number of vertices). By Remark~\ref{rem:extension_colour}, $\Gamma\ext$ is also $n$-colourable: pick such a colouring with a set $X$ of colours. Then, define a colouring of $\Gamma\ext_k$ with set of colours $2^X\setminus\{\emptyset\}$ as follows: a vertex $K$ of $\Gamma\ext_k$ represents a family $v_1,\dots, v_m$ of pairwise adjacent vertices of $\Gamma\ext$, which all have distinct colours in $X$. Choose as the colour of $K$ the subset of size $m$ of $X$ containing the colours of the $v_i$. Adjacent vertices of $\Gamma\ext_k$ correspond to distinct sub-cliques of a clique of $\Gamma\ext$, all of whose vertices have distinct colours. Since the sub-cliques are distinct, their colours as subsets of $X$ are distinct. Thus, $\Gamma\ext_k$ is $(2^n-1)$-colourable. By \cite{Erdos_girth}, there exists a connected finite graph $\Delta$ without induced $r$-gons for $3\leq r\leq R$, which is not $(2^n-1)$-colourable. Therefore, $\Delta$ cannot be an induced subgraph of $\Gamma\ext_k$.
\end{proof}

In combination with Theorem~\ref{thm:main_Kim_Koberda}, setting $R=3$, we reach the following corollary.

\begin{cor}
No finitely generated RAAG of any dimension contains subgroups isomorphic to all finitely generated two-dimensional RAAGs. \qed
\end{cor}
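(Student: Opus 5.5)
The corollary is a contrapositive combination of Lemma~\ref{lem:universal} (with $R=3$) and the converse direction of Theorem~\ref{thm:main_Kim_Koberda}. Suppose, for a contradiction, that some finitely generated RAAG $G$ contains subgroups isomorphic to all finitely generated two-dimensional RAAGs. A finitely generated RAAG is $A(\Gamma)$ for a finite graph $\Gamma$, since the defining graph is determined up to isomorphism (Droms) and finite generation forces $V(\Gamma)$ to be finite. So write $G=A(\Gamma)$ with $\Gamma$ finite.

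Apply Lemma~\ref{lem:universal} with $R=3$ to this $\Gamma$. It gives a connected finite graph $\Delta$ with no induced triangle and $\Delta\not\leq\Gamma\ext_k$.

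Next check that $A(\Delta)$ is two-dimensional, i.e.\ has cohomological dimension exactly two. This is the only point needing care. Since $\Delta$ is triangle-free, the cohomological dimension of $A(\Delta)$ is at most two. To ensure it is not free, we need $\Delta$ to contain an edge. This holds automatically: the $\Delta$ produced by Lemma~\ref{lem:universal} is not $(2^n-1)$-colourable with $n\geq 1$, so it is not $1$-colourable. Hence it has an edge, $A(\Delta)$ contains $\mathbb{Z}^2$, and its cohomological dimension is exactly two.

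Finally, by the hypothesis on $G$, we have $A(\Delta)\leq A(\Gamma)$. Theorem~\ref{thm:main_Kim_Koberda} (both graphs are finite) then yields $\Delta\leq\Gamma\ext_k$, contradicting the choice of $\Delta$. The argument holds for any dimension of $A(\Gamma)$ because Lemma~\ref{lem:universal} imposes no restriction on $\Gamma$. No step is a real obstacle; the only subtle point is confirming that $A(\Delta)$ is genuinely two-dimensional rather than free, which is handled above.
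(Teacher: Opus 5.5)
Your proposal is correct and follows the paper's own argument, which simply combines Lemma~\ref{lem:universal} with $R=3$ and the converse direction of Theorem~\ref{thm:main_Kim_Koberda}. Your extra check that the resulting $\Delta$ has an edge, so that $A(\Delta)$ is genuinely two-dimensional rather than free, is left implicit in the paper, and you handle it correctly (for instance by taking $n\geq 1$ in the colouring argument).
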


\subsection{Extension graphs and clique expansions} We prove the following lemmas for future use.

\begin{lem}
\label{lem:disjoint_cliques}Let $\Gamma$, $\Delta$ be graphs. Assume that $\Delta$ has a finite number $n\ge 1$ of vertices, and that $\clr(\Delta)\leq \Gamma\ext_k$. Then, there exists a clique expansion $\widetilde\Gamma$ of $\Gamma$ of order $n$ such that $\Delta\leq {\widetilde\Gamma}\ext_k$, and such that the vertices of $\Delta$ correspond to pairwise disjoint cliques of ${\widetilde\Gamma}\ext$.
\end{lem}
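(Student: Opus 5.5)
The plan is to blow up every vertex of $\Gamma$ into $n$ copies and then distribute the cliques among the copies. Let $\widetilde\Gamma$ be the graph with vertex set $V(\Gamma)\times\{1,\dots,n\}$. Two distinct vertices $(v,i)$ and $(w,j)$ are adjacent if $v=w$ or $v,w$ are adjacent in $\Gamma$. The projection $f\colon\widetilde\Gamma\to\Gamma$ is a clique map of order $n$. Since $A(\widetilde\Gamma)$ is the graph product of $\mathbb{Z}^n$ over $\Gamma$, each vertex group $\langle (v,1),\dots,(v,n)\rangle$ is free abelian.

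First, relate $\widetilde\Gamma\ext$ to $\Gamma\ext$. There is a surjective homomorphism $\pi\colon A(\widetilde\Gamma)\to A(\Gamma)$ sending each $(v,i)$ to $v$. The inclusion $\Gamma\to\widetilde\Gamma$, $v\mapsto (v,i)$, is an induced embedding, so it gives a section $\sigma_i\colon A(\Gamma)\to A(\widetilde\Gamma)$. I would define
\[\phi_i\colon V(\Gamma\ext)\to V(\widetilde\Gamma\ext),\qquad gvg^{-1}\mapsto \sigma_1(g)\,(v,i)\,\sigma_1(g)^{-1},\]
using the same conjugator $\sigma_1(g)$ for every $i$. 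This is well defined because the centraliser $A(\st_\Gamma v)$ is mapped by $\sigma_1$ into the centraliser of $(v,i)$. I would check the following properties.
\begin{itemize}
\item $\phi_i(x)\neq\phi_j(y)$ whenever $i\neq j$: distinct standard generators are not conjugate, so vertices with different second coordinates are never equal.
\item Two vertices $\phi_i(x)$ and $\phi_j(y)$ commute if and only if $x$ and $y$ commute or are equal. Equivalently, the map $\Phi\colon V(\Gamma\ext)\times\{1,\dots,n\}\to V(\widetilde\Gamma\ext)$ is an injective embedding of the graph $\Gamma\ext\boxtimes K_n$ (strong product with a clique) as an induced subgraph.
\end{itemize}
For the forward direction, apply $\sigma_1$ to a commuting pair in $A(\Gamma)$ and use that $(v,i)$ and $(v,j)$ have the same star. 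For the converse, apply $\pi$ and note that $\pi\circ\Phi(x,i)=x$. One point needs care: if $\phi_i(x)$ and $\phi_j(y)$ commute while $x=y$, that is fine since the pair counts as commuting or equal. The real issue is the opposite case, where $\pi$ sends a non-commuting pair to a commuting one; it does not, because $\pi\circ\Phi$ is the projection.

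Second, build the embedding. Identify $\clr(\Delta)$ with an induced subgraph of $\Gamma\ext_k$, and let $q\colon\Delta\to\clr(\Delta)$ be the clique map. Enumerate $V(\Delta)=\{u_1,\dots,u_n\}$ and send $u_j$ to the clique
\[K_j=\Phi\bigl(q(u_j)\times\{j\}\bigr)\subseteq\widetilde\Gamma\ext.\]
Each $K_j$ is a clique by the strong product structure. The $K_j$ are pairwise disjoint because their second coordinates differ, so in particular the assignment is injective. Adjacency is checked as follows.
\begin{itemize}
\item $K_j\cup K_l$ is a clique if and only if $q(u_j)\cup q(u_l)$ is a clique of $\Gamma\ext$, using $i\neq j$ and the strong product.
\item That condition holds if and only if $q(u_j)=q(u_l)$, or $q(u_j)$ and $q(u_l)$ are adjacent in $\Gamma\ext_k$.
\item This is exactly when $u_j$ and $u_l$ are adjacent in $\Delta$, since $q$ is a clique map and the identification of $\clr(\Delta)$ is induced.
\end{itemize}
Hence $\Delta\leq\widetilde\Gamma\ext_k$, with the vertices of $\Delta$ sent to pairwise disjoint cliques.

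The main obstacle is the first step, namely the claim that commutation in $A(\widetilde\Gamma)$ between the two conjugates $\sigma_1(g)(v,i)\sigma_1(g)^{-1}$ and $\sigma_1(h)(w,j)\sigma_1(h)^{-1}$ is detected exactly after applying $\pi$. If the $\pi$ argument is delicate, I would instead reduce to standard position. By Lemma~\ref{lem:extension_cliques} a pair of commuting vertices can be conjugated into $\Gamma$, and Theorem~\ref{thm:centraliser} then gives the centraliser of $(v,i)$ as $A(\st_{\widetilde\Gamma}(v,i))$. That centraliser is $\sigma_1$-compatible with $A(\st_\Gamma v)$ and contains all $(v,k)$.
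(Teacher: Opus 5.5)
Your proposal is correct and is essentially the paper's proof. The paper's clique $\varphi(K)=s(g)\bigl(g^{-1}f(K)g\times\{K\}\bigr)s(g)^{-1}$, where $f$ is the clique map $\Delta\to\clr(\Delta)$ (your $q$) and $s$ is the section at a basepoint (your $\sigma_1$), is vertex by vertex exactly your $\Phi\bigl(q(u_j)\times\{j\}\bigr)$. The only difference is organisational: you first show that $\Phi$ embeds the strong product of $\Gamma\ext$ with $K_n$ as an induced subgraph of $\widetilde\Gamma\ext$, so non-adjacency follows at once from $\pi\circ\Phi=\mathrm{pr}_1$, whereas the paper first conjugates both cliques into $\widetilde\Gamma$ via Lemma~\ref{lem:extension_cliques} before applying $\pi$.

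One stray justification should be discarded. The identity $\pi\circ\Phi=\mathrm{pr}_1$ does not show that $\pi$ never sends a non-commuting pair in the image of $\Phi$ to a commuting pair. That statement is precisely the forward direction, and your argument via $\sigma_1$ and the equality $Z_{A(\widetilde\Gamma)}((v,1))=Z_{A(\widetilde\Gamma)}((v,i))$ (Theorem~\ref{thm:centraliser}) already handles it correctly.
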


\begin{rem}
\label{rem:necessary_expansion}
Note that even under the stronger assumption $\Delta\leq \Gamma\ext_k$, a clique expansion might still be required to obtain disjointness. For example, let $\Gamma$ be a single edge, with vertices $a,b$. We have $\Gamma\ext = \Gamma$ and $\Gamma\ext_k$ is a triangle with vertices $\{a\}, \{b\}, \{a,b\}$. Let $\Delta = \Gamma\ext_k$. Clearly, there is no embedding of $\Delta$ in $\Gamma\ext_k$ with disjoint cliques. However, $\Delta$ embeds in any non-trivial clique expansion of $\Gamma$.
\end{rem}

\begin{proof}
We will first define a clique expansion ${\widetilde\Gamma}$ of $\Gamma$ and a map $\varphi\colon V(\Delta)\to V({\widetilde\Gamma}\ext_k)$, and then show that $\varphi$ extends to an embedding of $\Delta$ as an induced subgraph of ${\widetilde\Gamma}\ext_k$ whose vertices are pairwise disjoint cliques of $\widetilde\Gamma\ext$.

\medskip

\textbf{Step 1: Definition of ${\widetilde\Gamma}$.} We build ${\widetilde\Gamma}$ from $\Gamma$ by ``blowing-up'' each vertex into an $n$-clique. More precisely:
\begin{itemize}
\item the vertex set of ${\widetilde\Gamma}$ is $V(\Gamma)\times V(\Delta)$; 
\item there is an edge between $(v,x)$ and $(w,y)$ if and only if either $v=w$ and $x\neq y$, or $v$ and $w$ are adjacent. 
\end{itemize}
The clique map ${\widetilde\Gamma}\to \Gamma$ of order $n$ is given by the first coordinate projection. Let $\pi\colon A({\widetilde\Gamma})\to A(\Gamma)$ be the homomorphism induced by this projection. For an arbitrary basepoint $x_0\in V(\Delta)$, there is a section of the clique map given by $v\mapsto (v, x_0)$. This section extends to a homomorphism $s\colon A(\Gamma)\to A({\widetilde\Gamma})$. Clearly on generators, $\pi\circ s = \mathrm{id}_{A(\Gamma)}$.

\medskip

\textbf{Step 2: Definition of $\varphi\colon V(\Delta)\to V({\widetilde\Gamma}\ext_k)$.} 
Let $f\colon \Delta\to \clr(\Delta)$ be a clique map. For each vertex $K$ of $\Delta$, the vertex $f(K)$ belongs to  $V(\clr(\Delta))$, which in turn is contained in $V(\Gamma\ext_k)$; this enables us to view $f(K)$ as a clique of $\Gamma\ext$. By Lemma~\ref{lem:extension_cliques}, there exists $g\in A(\Gamma)$ such that $g^{-1}f(K)g$ is a clique of $\Gamma$. Thus, $g^{-1}f(K)g\times \{K\}$ is a clique of ${\widetilde\Gamma}$ by construction. We define
\[\varphi(K)\coloneqq s(g)(g^{-1}f(K)g \times \{K\})s(g)^{-1},\]
which is a clique of ${\widetilde\Gamma}\ext$, i.e.\ a vertex of ${\widetilde\Gamma}\ext_k$.

We check that this definition does not depend on the choice of $g$. Indeed, if $h\in A(\Gamma)$ and $h^{-1}f(K)h$ is also a clique of $\Gamma$, then
\begin{equation}\label{eq:1}
\tag{$\bigstar$}
g^{-1}f(K)g = h^{-1}f(K)h = h^{-1}g (g^{-1}f(K)g)g^{-1}h,
\end{equation}
because no two distinct vertices of $\Gamma$ are conjugate. More precisely, every vertex $v$ of $g^{-1}f(K)g$, seen as a standard generator of $A(\Gamma)$, commutes with $h^{-1}g$. By Theorem~\ref{thm:centraliser}, this means that $h^{-1}g$ is an element of
\[\bigcap_{v\in V(g^{-1}f(K)g)} A(\st_\Gamma(v)) = A\left(\bigcap_{v\in V(g^{-1}f(K)g)} \st_\Gamma(v)\right).\]

Hence, for every vertex $v\in V(g^{-1}f(K)g)$ and every letter $v'\in V(\Gamma)$ appearing in a reduced form word representing $h^{-1}g$, the vertices $v$ and $v'$ are equal or adjacent in $\Gamma$. By definition of ${\widetilde\Gamma}$, the vertices $(v,K)$ and $(v',x_0)$ are equal or adjacent in ${\widetilde\Gamma}$. Thus, $(v,K)$ commutes with $s(h^{-1}g)$ in $A({\widetilde\Gamma})$. In other words:
\[s(h^{-1}g)(g^{-1}f(K)g \times \{K\}) s(h^{-1}g)^{-1}= g^{-1}f(K)g \times \{K\}.\]
Conjugating by $s(h)$ and combining with Equation~\eqref{eq:1} yields
\[\varphi(K) = s(h)(g^{-1}f(K)g \times \{K\}) s(h)^{-1}= s(h)(h^{-1}f(K)h\times \{K\})s(h)^{-1},\]
and proves that $\varphi(K)$ does not depend on the choice of $g$.

\medskip

\textbf{Step 3: The map $\varphi$ is the desired graph embedding.} Now we will prove that $\varphi$ extends to the desired embedding of $\Delta$ as an induced subgraph of ${\widetilde\Gamma}\ext_k$ whose vertices are pairwise disjoint cliques.
\medskip

\textit{$\drsh$ Injectivity of $\varphi$ and disjoint clique property.} First, note that if $K\neq L$ are vertices of $\Delta$, every vertex of $\varphi(K)$ (resp. $\varphi(L)$) is conjugate in $A({\widetilde\Gamma})$ to a vertex of ${\widetilde\Gamma}$ whose second coordinate is $K$ (resp. $L$). Since distinct vertices of ${\widetilde\Gamma}$ are not conjugate, $\varphi(K)$ and $\varphi(L)$ are disjoint (non-empty) cliques of ${\widetilde\Gamma}\ext$. In particular, $\varphi$ is injective.

\medskip
\textit{$\drsh$ $\varphi$ preserves adjacency.} Next, if $K$ and $L$ are adjacent vertices of $\Delta$, then $f(K)$ and $f(L)$ are equal or adjacent vertices of $\clr(\Delta)\leq \Gamma\ext_k$: they are contained in a common clique of $\Gamma\ext$. Thus, by Lemma~\ref{lem:extension_cliques}, some element $g^{-1}\in A(\Gamma)$ conjugates both cliques to cliques of $\Gamma$, whose union still induces a clique. By definition of ${\widetilde\Gamma}$, the union of the (disjoint) cliques $g^{-1}f(K)g\times \{K\}$ and $g^{-1}f(L)g\times \{L\}$ of ${\widetilde\Gamma}$ also induces a clique. Conjugating by $s(g)$, the distinct cliques $\varphi(K)$ and $\varphi(L)$ are adjacent in ${\widetilde\Gamma}\ext_k$, proving that $\varphi$ extends to an injective graph morphism.

\medskip
\textit{$\drsh$ $\varphi$ preserves non-adjacency.} Finally, let $K$ and $L$ be distinct vertices of $\Delta$, and write \[\varphi(K)=s(g)(g^{-1}f(K)g \times \{K\})s(g)^{-1}, \quad \varphi(L) = s(h)(h^{-1}f(L)h \times \{L\})s(h)^{-1}.\] Assume $\varphi(K)$ and $\varphi(L)$ are adjacent vertices of ${\widetilde\Gamma}\ext_k$. Then, $\varphi(K)$ and $\varphi(L)$ are cliques of ${\widetilde\Gamma}\ext$ contained in a common clique. By Lemma~\ref{lem:extension_cliques}, some element $k\in A({\widetilde\Gamma})$ conjugates both cliques to cliques of ${\widetilde\Gamma}$, whose union still induces a clique. But, since no distinct vertices of ${\widetilde\Gamma}$ are conjugate, the only cliques of ${\widetilde\Gamma}$ conjugate to $\varphi(K)$ and $\varphi(L)$ are respectively $g^{-1}f(K)g\times \{K\}$ and $h^{-1}f(L)h\times \{L\}$. This has two consequences:
\begin{itemize}
    \item First, this implies that $ks(g)$ commutes in $A({\widetilde\Gamma})$ with all the vertices of $g^{-1}f(K)g \times \{K\}$, and $ks(h)$ commutes in $A({\widetilde\Gamma})$ with all the vertices of $h^{-1}f(L)h\times \{L\}$. Applying the homomorphism $\pi:A(\widetilde\Gamma)\to A(\Gamma)$ defined in Step~1, this means that $\pi(k)g$ commutes in $A(\Gamma)$ with all the vertices of $g^{-1}f(K)g$, and $\pi(k)h$ commutes in $A(\Gamma)$ with all the vertices of $h^{-1}f(L)h$. Therefore \[g^{-1}f(K)g = \pi(k)g (g^{-1}f(K)g) g^{-1}\pi(k)^{-1} = \pi(k) f(K) \pi(k)^{-1},\] and likewise, $h^{-1}f(L)h = \pi(k) f(L) \pi(k)^{-1}$.
    \item Second, it implies that the union of $g^{-1}f(K)g\times \{K\}$ and $h^{-1}f(L)h\times \{L\}$ induces a clique of ${\widetilde\Gamma}$, i.e.~the vertices of these two cliques are pairwise equal or adjacent. By definition of ${\widetilde\Gamma}$, the vertices of the two cliques $gf(K)g^{-1}$ and $hf(L)h^{-1}$ of $\Gamma$ must be pairwise equal or adjacent, i.e.~the union of $gf(K)g^{-1}$ and $hf(L)h^{-1}$ induces a clique of $\Gamma$. By the first consequence, this can be rephrased by saying that the union of $\pi(k)f(K)\pi(k)^{-1}$ and $\pi(k)f(L)\pi(k)^{-1}$ induces a clique of $\Gamma$. Conjugating by $\pi(k)^{-1}$, the union of $f(K)$ and $f(L)$ induces a clique of $\Gamma\ext$, i.e.~$f(K)$ and $f(L)$ are adjacent or equal vertices of $\Gamma\ext_k$. Since $\clr(\Delta)$ is an induced subgraph of $\Gamma\ext_k$, this means that $f(K)$ and $f(L)$ are adjacent or equal in $\clr(\Delta)$. Since $f$ is a clique map, its edge preimages are cliques, and $K$ and $L$ are adjacent in $\Delta$.
\end{itemize}
This proves that the image of the injective homomorphism $\varphi$ is an induced subgraph of ${\widetilde\Gamma}\ext_k$, i.e.~$\Delta\leq {\widetilde\Gamma}\ext_k$.
\end{proof}

The following lemma will only be used in Remark~\ref{rem:finite_unnecessary} in order to extend some of our main results to RAAGs over infinite graphs.

\begin{lem}
\label{lem:clique_graph_expansion}Let $\widetilde \Gamma$ be a clique expansion of countable order of a graph $\Gamma$. Then $\widetilde \Gamma\ext_k$ is a clique expansion of $\Gamma\ext_k$.
\end{lem}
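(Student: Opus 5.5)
The natural strategy is to first show that $\widetilde\Gamma\ext$ is itself a clique expansion of $\Gamma\ext$, and then apply Remark~\ref{rem:clique-map}. That remark says a clique map $F\colon\widetilde\Gamma\ext\to\Gamma\ext$ induces a clique map $F_k\colon\widetilde\Gamma\ext_k\to\Gamma\ext_k$. Note that the lemma only asserts that some clique map exists; the countability hypothesis is presumably there to control the order, or to allow a convenient choice of section.

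Let $f\colon\widetilde\Gamma\to\Gamma$ be the clique map, and let $\pi\colon A(\widetilde\Gamma)\to A(\Gamma)$ be the induced homomorphism. Choosing one vertex in each fibre of $f$ gives a section $\sigma\colon\Gamma\to\widetilde\Gamma$, with $\sigma(\Gamma)\leq\widetilde\Gamma$ and $\sigma(\Gamma)\simeq\Gamma$. It extends to a homomorphism $s\colon A(\Gamma)\to A(\widetilde\Gamma)$ with $\pi\circ s=\mathrm{id}$, exactly as in Step~1 of the proof of Lemma~\ref{lem:disjoint_cliques}. Define $F(gvg^{-1})\coloneqq\pi(g)f(v)\pi(g)^{-1}$ for $g\in A(\widetilde\Gamma)$ and $v\in V(\widetilde\Gamma)$.

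\textbf{Well-definedness.} If $gvg^{-1}=hwh^{-1}$, then $v=w$, because distinct vertices are never conjugate. Hence $h^{-1}g\in Z(v)=A(\st_{\widetilde\Gamma}(v))$ by Theorem~\ref{thm:centraliser}. Since $f$ is a graph map, $\pi$ sends $A(\st_{\widetilde\Gamma}(v))$ into $A(\st_\Gamma(f(v)))$, which centralises $f(v)$. So the two formulas for $F$ agree.

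\textbf{Graph map.} Let $x,y$ be adjacent vertices of $\widetilde\Gamma\ext$. By Lemma~\ref{lem:extension_cliques}, after conjugating we may write $x=v$ and $y=w$ with $v,w$ adjacent in $\widetilde\Gamma$. Then $f(v),f(w)$ are equal or adjacent, so $F$ sends edges to edges or vertices. Conjugation equivariance of $F$ along $\pi$ is built into the definition.

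\textbf{Surjectivity.} A vertex $gug^{-1}$ of $\Gamma\ext$ is the image of $s(g)\sigma(u)s(g)^{-1}$.

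\textbf{Clique condition.} Suppose $F(x)=F(y)$; we must show $\st(x)=\st(y)$ in $\widetilde\Gamma\ext$. Write $x=gvg^{-1}$ and $y=hwh^{-1}$. Then $\pi(g)f(v)\pi(g)^{-1}=\pi(h)f(w)\pi(h)^{-1}$, which gives $f(v)=f(w)$. Hence $v$ and $w$ have the same star in $\widetilde\Gamma$, so $v$ and $w$ are equal or adjacent, and $\pi(h^{-1}g)\in A(\st_\Gamma(f(v)))$.

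This is the main obstacle: we need to lift this centralising information to show that $x$ and $y$ themselves have equal stars. The key fact is that
\[\pi^{-1}\bigl(A(\st_\Gamma(f(v)))\bigr)=A\bigl(f^{-1}(\st_\Gamma(f(v)))\bigr)\cdot\ker\pi,\]
and that $f^{-1}(\st_\Gamma(f(v)))=\st_{\widetilde\Gamma}(v)$ because $f$ is a clique map. The difficulty is controlling $\ker\pi$.

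A cleaner route avoids $\ker\pi$ by working in the HNN or graph-product normal form. The plan is to show that $g^{-1}h$ can be rewritten, modulo $Z(v)$ on the right, as an element of $Z(w)$-cosets times a word whose letters all lie in $\st_{\widetilde\Gamma}(v)=\st_{\widetilde\Gamma}(w)$. Conjugating by such a word preserves the common star of $v$ and $w$, so it carries the star of $v$ onto the star of $hwh^{-1}$.

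If this lifting argument stalls, the fallback is to use the graph-theoretic description of $\widetilde\Gamma\ext$ as an increasing union of iterated doublings along stars, from \cite[Lemma~3.1]{Kim_Koberda}. One checks by induction that doubling $\widetilde\Gamma$ along $\st(v)$ is a clique expansion of the corresponding doubling of $\Gamma$ along $\st(f(v))$. The fibres must be matched along the way, and this is the point where countability of the order may be used: it lets one enumerate the vertices so that every vertex is visited infinitely often. Passing to the direct limit then gives the clique map $\widetilde\Gamma\ext\to\Gamma\ext$, and Remark~\ref{rem:clique-map} concludes.
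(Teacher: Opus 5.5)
\paragraph{The step that fails.} Your map $F(gvg^{-1})=\pi(g)f(v)\pi(g)^{-1}$ is a well-defined, surjective graph map, but it is not a clique map. So the step you single out as the main obstacle cannot be overcome: the clique condition is false for $F$.

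Take $\Gamma$ to be two isolated vertices $a,b$. Take $\widetilde\Gamma$ to be an edge $a_1a_2$ plus an isolated vertex $b$, with $f(a_1)=f(a_2)=a$. Then $k=a_1a_2^{-1}\in\ker\pi$, and $x=b$, $y=kbk^{-1}$ satisfy $F(x)=F(y)=b$. But $x\neq y$, since $k\notin Z(b)=\langle b\rangle$. Both are isolated vertices of $\widetilde\Gamma\ext$, so $\st(x)=\{x\}\neq\{y\}=\st(y)$, and the same holds for $\{x\},\{y\}$ in $\widetilde\Gamma\ext_k$. Your proposed rewriting cannot rescue this, because $g^{-1}h=k$ does not lie in $Z(b)\,A(\st_{\widetilde\Gamma}(b))\,Z(b)=\langle b\rangle$.

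There was also a warning sign: your main argument never uses countability of the order, yet the lemma is false without it. Let the fibre over $a$ be an uncountable clique $C$. Then the vertices $\{cbc^{-1}\}$, $c\in C$, of $\widetilde\Gamma\ext_k$ are uncountably many, with pairwise distinct stars. Any clique map would have to send them injectively into the countable graph $\Gamma\ext_k$, which is impossible. So countability is essential; it is not about controlling the order or choosing a section.

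\paragraph{The missing idea.} The correct identification of vertices is not the one induced by $\pi$. The paper views $A(\widetilde\Gamma)$ as the graph product over $\Gamma$ of the free abelian groups $G_v=A(f^{-1}(v))$. It sends each vertex of $\widetilde\Gamma\ext$ to the unique conjugate $gG_vg^{-1}$ containing it; this is well defined because distinct such conjugates intersect trivially. This yields a clique map onto the extension graph $\widetilde\Gamma^\ast$ of the graph product. The paper then uses $\widetilde\Gamma^\ast\cong\Gamma\ext$ from \cite[Lemma~2.12]{Escalier_Horbez}, applied to $(G_v)$ and infinite cyclic groups. That isomorphism is where countability of the $G_v$ is needed.

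Your $F$ is precisely this clique map followed by $gG_vg^{-1}\mapsto\pi(g)v\pi(g)^{-1}$. That second map is not injective: it identifies $\langle b\rangle$ with $k\langle b\rangle k^{-1}$.

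\paragraph{The doubling fallback.} This is more promising but unproven. Each stage is a clique map, since $F_i^{-1}(\st(F_i(\tilde v)))=\st(\tilde v)$, and compatible clique maps pass to unions. However, all the content now lies in the assertion that both limits are the extension graphs. In particular, you need that the $\Gamma$-side sequence, which is dictated by the $\widetilde\Gamma$-side one, converges to $\Gamma\ext$. This is the combinatorial counterpart of the Escalier--Horbez isomorphism, and it also requires the doubling description for infinite graphs. Finally, no $\mathbb{N}$-indexed enumeration exists when $\Gamma$ is uncountable. The lemma puts no restriction on $\Gamma$ and is used in that generality in Remark~\ref{rem:finite_unnecessary}.
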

\begin{proof}
Let $f_0:\tilde\Gamma\to\Gamma$ be a clique map of countable order. For every $v\in V(\Gamma)$, let $G_v$ be a free abelian group whose rank equals the cardinality of $f_0^{-1}(v)$. Then $A(\widetilde \Gamma)$ is isomorphic to the graph product of $(G_v)$ over $\Gamma$. Note that for each $v\in V(\Gamma)$, all the standard generators of $A(\widetilde\Gamma)$ contained in $G_v$ have the same star in $\widetilde\Gamma$. Let $\widetilde \Gamma^\ast$ be the graph whose vertices are of the form $gG_vg^{-1}$ for $g\in A(\widetilde\Gamma)$ and $v\in V(\Gamma)$, and two vertices are adjacent if and only if they correspond to commuting parabolic subgroups. This graph $\widetilde\Gamma^\ast$ is the \deffont{extension graph of the graph product} of $(G_v)$ over $\Gamma$ (see \cite[Definition~2.10]{Escalier_Horbez}). As in the case of extension graphs for RAAGs, $A(\widetilde\Gamma)$ acts on $\widetilde\Gamma^\ast$ by conjugation. The statement and proof of \cite[Lemma~2.12]{Escalier_Horbez} remain valid if $\Gamma$ is infinite. We apply this lemma with our $(G_v)$, and $(H_v)$ consisting only of infinite cyclic groups: we get that $\widetilde\Gamma^\ast$ is isomorphic to the extension graph of the graph product of $(H_v)$ over $\Gamma$, which has a clear identification with $\Gamma\ext$.

\textit{Observation}: the vertices of $\widetilde\Gamma^\ast$ are parabolic subgroups that pairwise intersect trivially. Indeed, assume otherwise. Then, up to conjugating, two distinct such parabolic subgroups of the form $gG_vg^{-1}$, $G_w$ intersect non-trivially, with $v,w\in V(\Gamma)$, $g\in A(\widetilde\Gamma)$. By \cite[Proposition~3.4]{Antolin_Minasyan}, $v=w$ and the intersection is all of $G_w$. Hence, $gG_vg^{-1} = gG_wg^{-1}\subseteq G_w$, and the inclusion is an equality by \cite[Lemma~3.9]{Antolin_Minasyan}, a contradiction. Therefore, each vertex of $\widetilde\Gamma\ext$, seen as an element of $A(\widetilde\Gamma)$, belongs to a unique vertex of $\widetilde\Gamma^\ast$. This defines an $A(\widetilde\Gamma)$-equivariant map $f\colon V(\widetilde\Gamma\ext)\to V(\widetilde\Gamma^\ast)$.

Now, we claim that $f$ extends to a clique map $\widetilde\Gamma\ext\to \widetilde\Gamma^\ast$. First we must prove that $f$ extends to a graph map, i.e.~maps adjacent vertices to equal or adjacent vertices. Since $f$ is $A(\widetilde\Gamma)$-equivariant and every edge of $\widetilde\Gamma\ext$ is conjugate to an edge of $\widetilde\Gamma$ by Lemma~\ref{lem:extension_cliques}, it suffices to check the case of two adjacent vertices $x,y$ of $\widetilde\Gamma$. Let $G_v, G_w$ be the vertex groups of the graph product containing $x$ and $y$ respectively. If $v=w$, then $f$ maps both $x$ and $y$ to the same vertex $G_v$ of $\widetilde\Gamma^\ast$. If $v\neq w$, since all the vertices in $G_v$ (resp. $G_w$) have the same star in $\widetilde\Gamma$ and $x,y$ are adjacent, $G_v$ and $G_w$ must commute. Thus $f$ maps $x$ and $y$ to adjacent vertices in that case. Hence, $f$ extends to a graph map $f\colon \widetilde\Gamma\ext\to \widetilde\Gamma^\ast$. Since the vertex groups of the graph product are all non-trivial, $f$ is surjective on vertices. Since all pairs of elements of two commuting vertex groups commute, $f$ is surjective on edges. Finally, assume that two vertices $gxg^{-1}$,$hyh^{-1}$ of $\widetilde\Gamma\ext$ have the same image under $f$, with $x,y\in V(\widetilde\Gamma)$, $g,h\in A(\widetilde\Gamma)$. Up to conjugating, this image is a vertex of the form $G_v$, $v\in V(\Gamma)$. This means that $gxg^{-1}$, $hyh^{-1}$ are both contained in the abelian parabolic subgroup $G_v\leq A(\widetilde\Gamma)$. In particular, $x,y\in G_v$, and $gxg^{-1} = x$, $hyh^{-1} = y$ by the observation above. These two vertices are adjacent and have the same star in $\widetilde\Gamma$. By Theorem~\ref{thm:centraliser}, $Z_{A(\widetilde\Gamma)}(x) = Z_{A(\widetilde\Gamma)}(y)$. Thus $x$ and $y$ have the same star in $\widetilde\Gamma\ext$. Therefore, $f$ is a clique map $\widetilde\Gamma\ext\to \widetilde\Gamma^\ast$. Using the isomorphism $\widetilde \Gamma^\ast\simeq\Gamma\ext$ given in the first paragraph of the proof, we get a clique map $\widetilde\Gamma\ext\to \Gamma\ext$. Finally, Remark~\ref{rem:clique-map} shows that there exists a clique map $\widetilde\Gamma\ext_k\to\Gamma\ext_k$.
\end{proof}

\subsection{Algorithmic aspects}\label{sec:algorithmic}

We finish this section by establishing the necessary statement to obtain the algorithmic part of Theorem~\ref{thm:main}. This is based on the following result.

\begin{thm}[Casals-Ruiz {\cite{Casals-Ruiz}}]
\label{thm:algorithmic_extension}There is an algorithm which, given two finite graphs $\Gamma$ and $\Delta$, decides whether $\Delta\leq \Gamma\ext$ holds or not.
\end{thm}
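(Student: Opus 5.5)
The plan is to reduce the question to a finite search, by showing that whenever $\Delta\leq\Gamma\ext$, an embedding can already be found inside an explicitly bounded finite subgraph of $\Gamma\ext$. The algorithm then enumerates all tuples of vertices of that finite subgraph and checks adjacency; adjacency of two conjugates of standard generators is decidable by the solution to the word problem in $A(\Gamma)$, using reduced forms as in Definition~\ref{defn:reduction}. Concretely, a vertex of $\Gamma\ext$ is a pair $(g,v)$ with $g\in A(\Gamma)$ given by a reduced word and $v\in V(\Gamma)$. The goal is an explicit function $N(|V(\Delta)|,|V(\Gamma)|)$ such that, if $\Delta\leq\Gamma\ext$, some embedding sends every vertex of $\Delta$ to some $gvg^{-1}$ with $|g|\leq N$.

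I would organise $\Gamma\ext$ through the Kim--Koberda description as an increasing union of finite graphs $\Gamma=\Gamma_0\subseteq\Gamma_1\subseteq\cdots$. Here $\Gamma_{i+1}$ is obtained from $\Gamma_i$ by doubling along the star of some vertex, that is, adjoining the conjugate $v\Gamma_i v^{-1}$, which is glued to $\Gamma_i$ along $\st_{\Gamma_i}(v)$. Any finite induced $\Delta\leq\Gamma\ext$ lies in some $\Gamma_m$, so it is enough to bound the number of \emph{useful} doublings. The key steps, in order, are as follows.
\begin{enumerate}
\item Record the doubling sequence as a finite tree of copies of $\Gamma$. The tree has one node per translate $g\Gamma g^{-1}$ used, and its edges are labelled by the stars along which copies are glued. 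This uses the covering of $\Gamma\ext$ by the copies $g\Gamma g^{-1}$ from Remark~\ref{rem:extension_colour}, together with Lemma~\ref{lem:extension_cliques}.
\item Prune the tree. A leaf copy containing no vertex of the image of $\Delta$ can be deleted without changing the induced subgraph spanned by the image.
\item Shorten long paths of copies. In a path of copies between two image vertices, each intermediate node is only relevant through which vertices of $\Gamma$ it separates from which. So the relevant data along the path is a word in the finite set of \emph{states}, a state being a subset of $V(\Gamma)$ together with the pattern of adjacency to the finitely many image vertices. A pigeonhole argument on states then lets me excise a repeated segment. Concretely, I would delete the corresponding subword from the conjugators $g_i$, and then check, via the centraliser description of Theorem~\ref{thm:centraliser}, that adjacency and non-adjacency among the image vertices is preserved.
\end{enumerate}
This yields a bound on the depth of the pruned tree, and hence on conjugator lengths, in terms of $|V(\Delta)|$ and $2^{|V(\Gamma)|}$.

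The main obstacle is step 3. Excising a segment must preserve non-adjacency, not only adjacency. Two image vertices far apart in the tree might become commuting after shortening the conjugator between them, for instance if the removed segment was exactly what prevented $g^{-1}h$ from lying in $A(\st_\Gamma(v))A(\st_\Gamma(w))$. My first attempt is to enrich the state with the set of image vertices whose star still ``sees'' the current copy. I would then argue via the normal form $g=g_1g_2$ from \cite[Proposition~2.1]{CCV} (as in the proof of Lemma~\ref{lem:extension_cliques}) that commutation of $gvg^{-1}$ and $hwh^{-1}$ is determined by these finite data. If this fails, a fallback is to appeal to Casals-Ruiz's original route, which encodes the existence of such conjugators as a finite system of equations over the partially commutative group; such systems are decidable. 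Either way, the outcome is a terminating enumeration, which proves the theorem.
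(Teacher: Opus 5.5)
The paper gives no proof of this statement. It is imported from Casals-Ruiz and used only as a black box in Corollary~\ref{cor:algorithmic_cliques}. Judged on its own, your main route has a genuine gap, and the gap sits where all the content lies. Enumerating tuples of conjugates and checking commutation with the word problem only semi-decides a positive answer, so the explicit bound $N$ from Step~3 is the whole proof. Step~3 is not carried out.

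Concretely, for $v,w\in V(\Gamma)$ and $g,h\in A(\Gamma)$:
\begin{itemize}
\item the vertices $gvg^{-1}$ and $hwh^{-1}$ commute if and only if $w\in\st_\Gamma(v)$ and $g^{-1}h\in A(\st_\Gamma(v))\,A(\st_\Gamma(w))$ (this is the argument of Lemma~\ref{lem:extension_cliques}, via \cite[Proposition~2.1]{CCV});
\item they are equal if and only if $v=w$ and $g^{-1}h\in A(\st_\Gamma(v))$.
\end{itemize}
Deleting a segment from the conjugators changes $g_u^{-1}g_w$ at once for every pair of image vertices whose conjugators differ across that segment. Whether an element lies in a product $A(S)A(T)$ depends on the relative order, up to commutation, of the letters of $S\setminus T$ and of $T\setminus S$ in a reduced word. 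This includes dependencies mediated by other letters, some inside the deleted segment and some on either side of it.

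Your states (a subset of $V(\Gamma)$ plus an adjacency pattern) do not record this. A repeated state therefore does not justify the excision, which may, as you suspect, create a commutation or collapse two image vertices. What is missing is a finite-state invariant controlling, for all pairs at once, membership of $g_u^{-1}g_w$ in $A(\st_\Gamma(v_u))A(\st_\Gamma(v_w))$ and in $A(\st_\Gamma(v_u))$, together with a proof that excision preserves it.

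Steps~1 and~2 do not provide the structure such an argument would need. Copies also overlap with non-neighbours in your tree: for commuting non-central generators $a,b$, the copy $ab\Gamma b^{-1}a^{-1}$ meets both $a\Gamma a^{-1}$ and $b\Gamma b^{-1}$. So ``the path of copies between two image vertices'' and ``the subword to delete'' are not well defined. Pruning is harmless but carries no information, since the induced subgraph on the image is intrinsic to $\Gamma\ext$.

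Your fallback is the argument that works, but as stated it is not quite right. Fix one of the finitely many maps $u\mapsto v_u$ from $V(\Delta)$ to $V(\Gamma)$. The existence of an embedding $u\mapsto g_uv_ug_u^{-1}$ is then an existential sentence over $A(\Gamma)$ with constants, in the unknowns $g_u$. It consists of:
\begin{itemize}
\item the equations $[g_uv_ug_u^{-1},g_wv_wg_w^{-1}]=1$ for edges $uw$ of $\Delta$;
\item the inequations $[g_uv_ug_u^{-1},g_wv_wg_w^{-1}]\neq 1$ for non-edges;
\item the inequations $g_uv_ug_u^{-1}\neq g_wv_wg_w^{-1}$ for $u\neq w$.
\end{itemize}
The non-commutation inequations are a real constraint whenever $v_u,v_w$ are adjacent in $\Gamma$, so decidability of systems of \emph{equations} is not enough. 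You need decidability of the existential theory of $A(\Gamma)$ with constants. This is the theorem of Diekert and Muscholl on equations with normalized rational constraints in graph groups; the constraint $z\neq 1$ is of this kind.

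With that input the proof is complete. It is a reduction to a deep decision procedure rather than the ``terminating enumeration'' you announce.
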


We now prove the following corollary.

\begin{cor}
\label{cor:algorithmic_cliques}There is an algorithm which, given two finite graphs $\Gamma$ and $\Delta$, decides whether $\Delta\leq \Gamma\ext_k$ holds or not.
\end{cor}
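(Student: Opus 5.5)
The plan is to reduce the question $\Delta\leq\Gamma\ext_k$ to finitely many instances of the question $\Delta'\leq\Gamma'\ext$, each decidable by Theorem~\ref{thm:algorithmic_extension}. The reduction uses a blown-up graph $\widetilde\Gamma$. Let $n=|V(\Delta)|$, and let $\widetilde\Gamma$ be the clique expansion of order $n$ from Step~1 of the proof of Lemma~\ref{lem:disjoint_cliques}: its vertex set is $V(\Gamma)\times V(\Delta)$, and each vertex of $\Gamma$ is blown up into an $n$-clique. Since $\Delta$ is finite, we may compute its clique reduction $\clr(\Delta)$, and more generally enumerate all graphs $\Delta'$ with at most $n$ vertices.

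The key equivalence I would establish is
\[\Delta\leq\Gamma\ext_k \iff \text{there exists a graph } \Delta' \text{ with } \Delta'\leq\widetilde\Gamma\ext \text{ and a clique map } \Delta'\to\Delta' _0 \text{ with } \Delta\simeq \Delta'_0,\]
or, in a more usable form, $\Delta\leq\Gamma\ext_k$ if and only if some clique expansion $\Delta'$ of $\Delta$ of order at most $n$ embeds as an induced subgraph of $\widetilde\Gamma\ext$. To run the algorithm, I would then enumerate the finitely many clique expansions $\Delta'$ of $\Delta$ of order at most $n$ (up to isomorphism). For each one, I would ask the algorithm of Theorem~\ref{thm:algorithmic_extension} whether $\Delta'\leq\widetilde\Gamma\ext$.

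For the forward direction, start from an embedding $\Delta\leq\Gamma\ext_k$. Each vertex $K$ is a finite clique of $\Gamma\ext$, of size at most the clique number of $\Gamma$. Mimicking Step~2 of Lemma~\ref{lem:disjoint_cliques}, I would replace $K=gCg^{-1}$ (with $C$ a clique of $\Gamma$) by $s(g)(C\times\{K\})s(g)^{-1}$. This produces pairwise disjoint cliques of $\widetilde\Gamma\ext$, and the argument of Step~3 shows they realise $\Delta\leq\widetilde\Gamma\ext_k$. Now let $\Delta'$ be the subgraph of $\widetilde\Gamma\ext$ induced by the union of these cliques. Two vertices of $\Delta'$ lying in cliques $K,L$ are adjacent exactly when $K,L$ are equal or adjacent in $\Delta$. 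Indeed, by Lemma~\ref{lem:extension_cliques} and the non-conjugacy of distinct vertices, adjacency of single vertices across $K$ and $L$ should force the whole cliques to span a clique. I expect checking this to be the main obstacle: one must show that adjacency between one pair of vertices drawn from two disjoint blown-up cliques already implies adjacency of all pairs. My first attempt would use that each clique lies in a single conjugate $s(g)\widetilde\Gamma s(g)^{-1}$ together with the centraliser theorem (Theorem~\ref{thm:centraliser}). If this fails in general, I would instead enlarge the search to all graphs $\Delta'$ whose vertex set is partitioned into cliques indexed by $V(\Delta)$, and test the combinatorial condition on the partition directly. This is still a finite search, since the clique sizes are bounded by the clique number of $\Gamma$.

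For the backward direction, suppose $\Delta'\leq\widetilde\Gamma\ext$ with a clique map $\Delta'\to\Delta$. The preimages of the vertices of $\Delta$ are cliques of $\widetilde\Gamma\ext$, so this gives $\Delta\leq\widetilde\Gamma\ext_k$ (two preimage cliques are adjacent in $\widetilde\Gamma\ext_k$ exactly when their union is a clique). Lemma~\ref{lem:clique_graph_expansion} then gives a clique map $\widetilde\Gamma\ext_k\to\Gamma\ext_k$. Following the argument of Lemma~\ref{lem:square_expansion}, this yields $\clr(\Delta)\leq\Gamma\ext_k$. Since only $\clr(\Delta)$ survives this step, I would run the whole procedure on clique-reduced inputs. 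For general $\Delta$, I would additionally enumerate the cliques of $\Gamma\ext$ of bounded size that could represent the vertices of a clique-map fibre, and decide $\Delta\leq\Gamma\ext_k$ by combining the test for $\clr(\Delta)$ with a check on the sizes of the fibres. Both of these are finite computations on $\Gamma$.
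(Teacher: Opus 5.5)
There is a genuine gap, and it is exactly the step you flagged as the main obstacle. The claim that adjacency of one pair across two disjoint blown-up cliques forces adjacency of all pairs is false. Two cliques $K,L$ are non-adjacent in $\Gamma\ext_k$ as soon as \emph{some} pair $x\in K$, $y\in L$ is not joined by an edge, and blowing up does not change this. In the example of Remark~\ref{rem:meas-not-alg} ($\Gamma$ a pentagon, $\hat v\mapsto\{w_1,w_2\}$, $v_3\mapsto\{w_3\}$), the vertices $(w_2,\hat v)$ and $(w_3,v_3)$ of $\widetilde\Gamma$ are adjacent although $\hat v$ and $v_3$ are not adjacent in $\Delta$.

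No better construction can rescue the criterion either. A clique expansion $\Delta'$ of $\Delta$ always contains $\Delta$ as an induced subgraph (take a section). Moreover, $\widetilde\Gamma\ext$ is a clique expansion of $\Gamma\ext$ (see the proof of Lemma~\ref{lem:clique_graph_expansion}). So for clique-reduced $\Delta$, Lemma~\ref{lem:square_expansion} shows that your criterion forces $\Delta\leq\Gamma\ext$. In the pentagon example, $\Delta$ is clique reduced and contains a triangle, while $\Gamma\ext$ has none (Lemma~\ref{lem:extension_cliques}). Your test therefore answers no, although $\Delta\leq\Gamma\ext_k$.

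Your fallback (graphs partitioned into cliques indexed by $V(\Delta)$, tested in $\widetilde\Gamma\ext$) is sound. However, by Lemmas~\ref{lem:disjoint_cliques}, \ref{lem:clique_graph_expansion} and~\ref{lem:square_expansion}, it decides precisely whether $\clr(\Delta)\leq\Gamma\ext_k$: the blow-up is exactly what erases the difference between $\Delta$ and $\clr(\Delta)$. That suffices for the condition $\clr(\Delta)\leq\Gamma\ext_k$ in Theorem~\ref{thm:main}, but not for the corollary as stated. The $\clr(\Delta)$ test alone cannot decide it: for $\Gamma$ a single edge, $\clr(K_3)$ and $\clr(K_4)$ are both a point, yet $K_3\leq\Gamma\ext_k$ while $K_4\nleq\Gamma\ext_k$ (the latter graph has three vertices). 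Your final step, combining the $\clr(\Delta)$ test with a check on fibre sizes, is not an actual procedure. The cliques realising a fibre must be distinct, pairwise adjacent in $\Gamma\ext_k$, and have identical adjacencies to the images of all other vertices, so they are coupled to the chosen embedding of $\clr(\Delta)$.

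The missing idea is that you need neither the blow-up nor disjointness. Search directly in $\Gamma\ext$, and allow the cliques to overlap by labelling each vertex $x$ of a candidate finite graph $\Delta'$ with the non-empty set of $v\in V(\Delta)$ whose clique $K_v$ contains $x$. Then $\Delta\leq\Gamma\ext_k$ if and only if some such labelled $\Delta'$ satisfies the following:
\begin{itemize}
\item it has at most $\omega|V(\Delta)|$ vertices, where $\omega$ is the clique number of $\Gamma$, equal to that of $\Gamma\ext$ by Lemma~\ref{lem:extension_cliques};
\item the $K_v$ are non-empty, pairwise distinct cliques, with $K_v\cup K_w$ spanning a clique exactly when $v$ and $w$ are adjacent;
\item $\Delta'\leq\Gamma\ext$, which Theorem~\ref{thm:algorithmic_extension} decides.
\end{itemize}
This is the paper's argument.
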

\begin{proof}
First note that, by definition of the clique graph, for any graphs $\Theta$ and $\Delta$ with $\Delta$ finite, we have $\Delta\leq \Theta_k$ if and only if there exists a family $(K_v)_{v\in V(\Delta)}$ of distinct (finite) cliques of $\Theta$ such that distinct vertices $v,w\in V(\Delta)$ are adjacent if and only if $K_v\cup K_w$ induces a clique of $\Theta$. In that case, we can consider the finite subgraph $\Delta'\leq \Theta$ induced by $\displaystyle \bigcup_{v\in V(\Delta)} K_v$, with a label belonging to $2^{V(\Delta)}\setminus\{\emptyset\}$ on each vertex $x$, namely the set of $v\in V(\Delta)$ such that $x$ is in $K_v$.

From that, we extract a definition: a finite graph $\Delta'$ endowed with a map $f\colon V(\Delta')\to 2^{V(\Delta)}\setminus\{\emptyset\}$ \deffont{represents} $\Delta$ if the following hold:
\begin{itemize}
    \item For every $v\in V(\Delta)$, the subgraph of $\Delta'$ induced by the set of vertices $x$ such that $v\in f(x)$ is a non-empty clique $K_v$.
    \item The cliques $K_v$, $v\in V(\Delta)$ are pairwise distinct.
    \item For distinct vertices $v,w\in V(\Delta)$, the union $K_v\cup K_w$ induces a clique of $\Delta'$ if and only if $v$ and $w$ are adjacent in $\Delta$.
\end{itemize}
By our previous observation, $\Delta\leq \Theta_k$ if and only if there exists a representative $(\Delta',f)$ of $\Delta$ with $\Delta'\leq \Theta$. Note that if $\Theta$ has maximal clique size $n$, representatives of $\Delta$ have at most $n|V(\Delta)|$ vertices. Note also that for any graph $\Gamma$, the maximal clique size of $\Gamma\ext$ is the same as the maximal clique size of $\Gamma$, by Lemma~\ref{lem:extension_cliques}.

We can now write the steps of our algorithm, with input two finite graphs $\Gamma$ and $\Delta$, applying the previous characterisation to $\Theta \coloneqq \Gamma\ext$:
\begin{enumerate}
    \item Compute the maximal clique size $n$ of $\Gamma$.
    \item Make a list $\mathcal{L}$ of all the pairs $(\Delta',f)$ where $\Delta'$ is a graph with at most $n|V(\Delta)|$ vertices and $f$ is a function $V(\Delta')\to 2^{V(\Delta)}\setminus\{\emptyset\}$.
    \item For each entry of $\mathcal{L}$, check if it represents $\Delta$. If yes, keep the entry, if not, discard the entry.
    \item For each kept entry $(\Delta',f)$ of $\mathcal{L}$, run the algorithm from Theorem~\ref{thm:algorithmic_extension} on $\Delta'$ and $\Gamma$, to check if $\Delta'$ is an induced subgraph of $\Gamma\ext$. If the answer is yes for one of the entries, then $\Delta\leq \Gamma\ext_k$. If the answer is no for all entries, then $\Delta\nleq \Gamma\ext_k$. \qedhere
\end{enumerate}
\end{proof}

\section{Background on measurable embeddings}\label{section:background_me}

In this section, we review measured groupoids and their relationship to measurable embeddings. General references for the language of measured groupoids include \cite{ADR,kidaintro} or \cite[Section~3]{guirardelhorbez} and the references therein.

\subsection{Measured groupoids and cocycles}

A \deffont{Borel groupoid} over a standard Borel space $X$ is a standard Borel space $\mclG$ equipped with 
\begin{itemize}
    \item two Borel maps $s:\mclG\to X$ and $r:\mclG\to X$ (the \deffont{source} and \deffont{range} maps);
    \item an associative Borel \deffont{composition map} 
    \[\begin{array}{cccc}
    \mclG^{(2)}&\to &\mclG\\
    (\underline{g}_1,\underline{g}_2) &\mapsto & \underline{g}_1\underline{g}_2
    \end{array}\]
    defined on $\mclG^{
(2)} := \{(\underline{g}_1,\underline{g}_2)\in\mclG\times\mclG \mid s(\underline{g}_1)= r(\underline{g}_2)\}$, such that $s(\underline{g}_1\underline{g}_2)=s(\underline{g}_2)$ and $r(\underline{g}_1\underline{g}_2)=r(\underline{g}_1)$ for all $(\underline{g}_1,\underline{g}_2)\in\mclG^{(2)}$;
\item a Borel \deffont{unit map} \[\begin{array}{cccc}
    X&\to &\mclG\\
    x &\mapsto & \underline{e}_x
    \end{array}\] such that $s(\underline{e}_x)=r(\underline{e}_x)=x$, and $\underline{g}\underline{e}_x=\underline{g}$ whenever $s(\underline{g})=x$, and $\underline{e}_x\underline{g}=\underline{g}$ whenever $r(\underline{g})=\underline{e}_x$;
\item a Borel \deffont{inverse map} \[\begin{array}{cccc}
    \mclG&\to &\mclG\\
    \underline{g} &\mapsto & \underline{g}^{-1}
    \end{array}\]
    such that $s(\underline{g}^{-1})=r(\underline{g})$ and $r(\underline{g}^{-1})=s(\underline{g})$ and $\underline{g}^{-1}\underline{g}=\underline{e}_{s(\underline{g})}$ and $\underline{gg}^{-1}=\underline{e}_{r(\underline{g})}$.
\end{itemize}
It is \deffont{discrete} if preimages of the range and source maps are countable.

\medskip

A theorem of Lusin--Novikov (see e.g.\ \cite[Theorem~18.10]{kechris}) ensures that every discrete Borel groupoid is covered by countably many \deffont{bisections}, i.e.\ Borel subsets $B\subseteq\mclG$ such that $s_{|B}$ and $r_{|B}$ are isomorphisms to Borel subsets $s(B)$ and $r(B)$ of $X$. A \deffont{measured groupoid} is a discrete Borel groupoid over a standard measure space $X$, together with a Borel measure $\mu$ on $X$ such that for every bisection $B\subseteq\mclG$, one has $\mu(s(B))=0$ if and only if $\mu(r(B))=0$.

Given a measured groupoid $\mclG$ over $(X,\mu)$ and a Borel subset $U\subseteq X$, the restriction \[\mathcal{G}_{|U}:=\{\underline{g}\in\mclG \mid s(\underline{g}),r(\underline{g})\in U\}\] is naturally a measured groupoid over $(U,\mu_{|U})$. A measured groupoid $\mclG$ over $(X,\mu)$ is of \deffont{infinite type} if for any Borel subset $U\subseteq X$ of positive measure and for almost every $x\in U$, there are infinitely many $\underline{g}\in\mclG_{|U}$ such that $s(\underline{g})=x$.

\medskip
Given a measured groupoid $\mclG$ and a countable group $G$, a measurable map $\rho:\mclG\to G$ is a \deffont{cocycle} if $\rho(\underline{g}_1\underline{g}_2)=\rho(\underline{g}_1)\rho(\underline{g}_2)$ for all $(\underline{g}_1,\underline{g}_2)\in\mclG^{(2)}$. It has \deffont{trivial kernel} if every element $\underline{g}\in\mclG$ such that $\rho(\underline{g})=1$ is a unit, i.e.\ of the form $\underline{e}_x$ for some $x\in X$. Following \cite[Definition~3.21]{guirardelhorbez}, we say that a cocycle $\rho:\mclG\to G$ is of \deffont{action type} if 
\begin{itemize}
    \item it has trivial kernel, and
    \item for every infinite subgroup $H\subseteq G$, the groupoid $\rho^{-1}(H)$ is of infinite type.
\end{itemize}
Note that if $\rho$ has trivial kernel (resp.\ is of action type), then for every positive measure Borel subset $U\subseteq X$, the restriction of $\rho$ to $\mclG_{|U}$ has trivial kernel (resp.\ is of action type).

\begin{ex}\label{cocycles from pmp actions are action-type}
    Let $G$ be a countable group acting by measure-preserving Borel automorphisms on a standard measure space $X$. Then $G\times X$ is naturally a measured groupoid over $X$, by letting $s(g,x)=x$ and $r(g,x)=gx$, defining the composition map by $(g,hx)(h,x)=(gh,x)$, the unit map by $\underline{e}_x=(e,x)$, and the inverse map by $(g,x)^{-1}=(g^{-1},gx)$. This measured groupoid is denoted by $G\ltimes X$. It comes with a natural cocycle $\rho$ towards $G$, by letting $\rho(g,x)=g$. This cocycle always has trivial kernel; when $X$ has finite measure, it is of action type by \cite[Proposition~2.26]{kidaintro}.
\end{ex}

\subsection{Amenability and normalisation}

There is a notion of amenability of a measured groupoid which generalises Zimmer's notion of amenability of a group action \cite[Definition~1.4]{Zimmer}, see e.g.\ \cite[Definition~3.36]{guirardelhorbez}. There is also a notion of normalisation of a measured subgroupoid by another, following \cite{FSZ,kidamemoire}, see e.g.\ \cite[Definition~3.33]{guirardelhorbez}. We refer the reader to the given references for the definitions, and only gather here some facts we will need in the present paper (they will be used exclusively in the proof of Proposition~\ref{prop:parabolic_support}).

\begin{facts}
\label{facts}Let $\mclG$ be a measured groupoid over a standard probability space $X$, and let $\rho:\mclG\to G$ be a cocycle.
\begin{enumerate}
    \item\label{fact-1} If $\rho$ has trivial kernel and $G$ is amenable, then $\mclG$ is amenable (see e.g. \cite[Corollary~3.42]{guirardelhorbez}).
    \item\label{fact-2} If $\rho$ is of action type and $G$ contains a non-abelian free group, then $\mclG$ is non-amenable (see \cite[Lemma~3.20]{kidaannals} or \cite[Lemma~3.43]{guirardelhorbez}).
    \item\label{fact-3} If $K,H\subseteq G$ are subgroups such that $K$ is normalised by $H$, then $\rho^{-1}(K)$ is normalised by $\rho^{-1}(H)$ (see e.g.\ \cite[Lemma~3.35]{guirardelhorbez}).
\end{enumerate}
\end{facts}

Amenability and normalisation are stable under restriction to a Borel subset of $X$.

\subsection{Measure equivalence and measurable embeddings}\label{section meas embeddings}

\medskip
The following definition is at the heart of the present work.
\bigskip

\begin{defn}
    Let $G,H$ be two countable groups. We say that $H$ \deffont{measurably embeds} into $G$ if there exists a standard Borel space $(\Sigma,m)$ with a measure-preserving action of $G\times H$ such that
    \begin{itemize}
        \item the $G$-action on $\Sigma$ is free and admits a measurable fundamental domain of finite positive measure,
        \item the $H$-action on $\Sigma$ is free and admits a measurable fundamental domain.
    \end{itemize}
    The groups $G$ and $H$ are 
    \begin{itemize}
    \item \deffont{measure equivalent} if there exists $(\Sigma,m)$ as above where additionally the $H$-action has a mesurable fundamental domain of finite measure;
    \item \deffont{orbit equivalent} if there exists $(\Sigma,m)$ as above where additionally $G$ and $H$ have a common measurable fundamental domain of finite measure. 
    \end{itemize}
\end{defn}

\begin{rem}\label{rem:oe-me}
Orbit equivalence of $G$ and $H$ is also characterised by the existence of free measure-preserving actions of $G$ and $H$ on a standard probability space $X$ such that for almost every $x\in X$, one has $H\cdot x=G\cdot x$, see \cite{Furman_oe} and \cite[Section~2.1]{gaboriau_examples}.

Clearly from the definitions, every orbit equivalence is a measure equivalence, and every measure equivalence is a measurable embedding.

Moreover, assume there exists a standard probability space $X$ equipped with two measure-preserving actions of $G$ and $H$ such that for almost every $x\in X$, one has $H\cdot x\subseteq G\cdot x$. Then $H$ measurably embeds into $G$ by \cite[Remark~2.36]{DKLMT}.
\end{rem}

\begin{rem}\label{transitivité du plongement ME}
Measure equivalence and orbit equivalence are equivalence relations on the set of all countable groups, see \cite[Section~2]{Furman_me}.

Measurable embedding is a pre-order. In particular, if $G_1$ measurably embeds into $G_2$ and $G_2$ into $G_3$, then $G_1$ measurably embeds into $G_3$. This is shown for instance in \cite[Proposition~2.9]{DKLMT}.
\end{rem}

\begin{rem}\label{alg embedding is meas embedding}
Let $G,H$ be two countable groups. If $H$ embeds as a subgroup in $G$, via an injective homomorphism $f:H\to G$, then $H$ measurably embeds into $G$. Indeed in this case, one can take $\Sigma=G$ with the counting measure, and consider the action of $H\times G$ on $\Sigma$ given by $(h,g)\cdot g'=f(h)g'g^{-1}$. The action of $G$ has a fundamental domain reduced to a single point.
\end{rem}

The use of measured groupoids in the study of measurable embeddings comes from the following lemma.

\begin{lem}[{see \cite[Lemma~2.18]{gurieva}}]
\label{lem:me-embedding}Let $G,H$ be two countable groups, and assume that $H$ measurably embeds into $G$. Then there exists a measured groupoid $\mclG$ over a standard probability space, equipped with an cocycle of action type $\rho_H:\mclG\to H$, and a cocycle with trivial kernel $\rho_G:\mclG\to G$.
\end{lem}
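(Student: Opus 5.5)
We build the groupoid directly from the coupling $(\Sigma,m)$ given by the definition of a measurable embedding. Fix a Borel fundamental domain $Y\subseteq\Sigma$ for the $G$-action, with $0<m(Y)<\infty$; after rescaling, $m(Y)=1$. Identify $Y$ with $G\backslash\Sigma$, with its probability measure. The $H$-action on $\Sigma$ commutes with the $G$-action, so it descends to an action of $H$ on $Y\simeq G\backslash\Sigma$. Concretely, for $h\in H$ and $y\in Y$ there is a unique $g\in G$ with $ghy\in Y$; set $h\bullet y:=ghy$ and $\alpha(h,y):=g$. We take $\mclG:=H\ltimes Y$, the measured groupoid of Example~\ref{cocycles from pmp actions are action-type}.

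The first step is to check that the $H$-action $\bullet$ on $Y$ preserves the measure. The standard argument partitions $Y$ into the countably many Borel pieces $Y_g=\{y: ghy\in Y\}$. On each piece the map $y\mapsto ghy$ is measure-preserving, because the $G\times H$-action preserves $m$. The images of the pieces are pairwise disjoint, since $Y$ is a fundamental domain and the $H$-action is injective modulo $G$. This is the usual proof from Furman's work on ME couplings. Once this holds, Example~\ref{cocycles from pmp actions are action-type} shows that the natural cocycle $\rho_H(h,y)=h$ has trivial kernel, and that it is of action type because $Y$ has finite measure.

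Next we define $\rho_G(h,y):=\alpha(h,y)$. The cocycle identity $\alpha(h_1h_2,y)=\alpha(h_1,h_2\bullet y)\alpha(h_2,y)$ follows from uniqueness in the fundamental domain, using that the $G$- and $H$-actions commute. Measurability of $\alpha$ holds because $\alpha$ is constant on each piece $Y_g$.

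The remaining point, and the one that genuinely uses freeness of the $H$-action on $\Sigma$, is that $\rho_G$ has trivial kernel. Suppose $\alpha(h,y)=1$. Then $hy=h\bullet y\in Y$, but we also need this arrow $(h,y)$ to be a unit, i.e.\ $h=e$. In general this fails: $hy$ may lie in $Y$ with $h\neq e$, and $(h,y)$ is then a non-unit arrow with trivial $G$-label. So we must modify the construction. One option is to replace $\mclG$ by the groupoid over $Y$ generated by both structures, that is, the restriction to $Y$ of $(G\times H)\ltimes\Sigma$, and take the arrows coming from $H$-translates with the pair of labels. A better option is to encode each arrow by a pair $(g,h)$ with $g h y=y'$, i.e.\ to use $\mclG:=\big((G\times H)\ltimes\Sigma\big)_{|Y}$ with the two projection cocycles. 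For this groupoid, $\rho_G$ trivial means the arrow is $(e,h)$ with $hy\in Y$. Trivial kernel of $\rho_G$ would need $H$ to act freely on $G\backslash\Sigma$, which fails in general, so the $G$-labelling must be adjusted. The symmetric choice, which we expect matches the statement, is this: trivial kernel of $\rho_H$ comes from freeness of $G$ on $\Sigma$, since $(g,e)y=y$ forces $g=e$. Kernel-triviality of $\rho_G$ should instead come from choosing $Y$ to be an $H$-fundamental domain intersected appropriately, or from the $H$-freeness on $\Sigma$ via the analogous argument.

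Checking that the action-type property of $\rho_H$ persists is the main obstacle. It requires showing that for each infinite $H'\le H$, $\rho_H^{-1}(H')$ has infinite type. Following \cite[Lemma~2.18]{gurieva}, we would argue by a Poincaré recurrence type argument on the finite-measure space $Y$: an infinite group acting in a measure-preserving way on a probability space returns infinitely often to any positive-measure set.
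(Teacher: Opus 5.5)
\textbf{The gap: trivial kernel of $\rho_G$ is never established.} Most of your setup is right and agrees with the paper's sketch: the finite-measure $G$-fundamental domain $Y$, the induced measure-preserving action $h\bullet y=\alpha(h,y)hy$, the groupoid $H\ltimes Y$ with $\rho_H(h,y)=h$, and $\rho_G:=\alpha$. You also diagnose the problem correctly: on $H\ltimes Y$, the condition $\alpha(h,y)=e$ only says that $hy\in Y$, and this does not force $h=e$. But you never repair it. You list several candidate modifications and end by saying that kernel-triviality ``should'' come from intersecting with an $H$-fundamental domain ``appropriately'', without carrying anything out. Trivial kernel of $\rho_G$ is half of what the lemma asserts, so this is a genuine gap.

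\textbf{The missing step.} Choose an $H$-fundamental domain $D_H$. Up to a null set, $\Sigma$ is covered by the countably many translates $hD_H$, and $m(Y)>0$. So some translate, which is again an $H$-fundamental domain, meets $Y$ in positive measure. Replacing $D_H$ by it, the set $X:=Y\cap D_H$ has positive finite measure. Take $\mathcal{G}:=(H\ltimes Y)_{|X}$ with the renormalised measure, together with the restrictions of $\rho_H$ and $\alpha$. Now suppose $(h,x)\in\mathcal{G}$ satisfies $\alpha(h,x)=e$. Then $hx=h\bullet x\in X\subseteq D_H$, and also $x\in D_H$. Two points of an $H$-fundamental domain lying in the same $H$-orbit are equal, so $hx=x$. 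Freeness of the $H$-action on $\Sigma$ then gives $h=e$. This is exactly the paper's choice $X=D_G\cap D_H$. The paper phrases it as an isomorphism $(H\ltimes D_G)_{|X}\simeq(G\ltimes D_H)_{|X}$, under which $\alpha$ becomes the tautological cocycle of $G\ltimes D_H$. Your second option, $((G\times H)\ltimes\Sigma)_{|Y}$ with the projection cocycles, is also rescued by the same restriction to $X$.

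\textbf{Action type is not the obstacle.} The point you single out as the main difficulty is not one. The cocycle $\rho_H$ is of action type on $H\ltimes Y$ because $Y$ has finite measure; this is Example~\ref{cocycles from pmp actions are action-type}, which is where the recurrence argument you allude to lives. Both trivial kernel and the action-type property pass to restrictions to positive-measure Borel subsets, as noted right after the definition of action type. Indeed, infinite type of $\rho_H^{-1}(H')_{|U}$ for $U\subseteq X$ is just the definition applied on $Y$. So once you restrict to $X$, nothing further needs to be checked for $\rho_H$.
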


We briefly explain how to construct the groupoid $\mclG$ from the measurable embedding, and refer to the proof of \cite[Lemma~2.18]{gurieva} for details. Fix a measurable embedding from $H$ to $G$, given by a coupling $(\Sigma,m)$, and take respective fundamental domains $D_H$ and $D_G$ whose intersection $X$ has positive measure. Identifying $D_H\simeq \Sigma/H$ and $D_G\simeq \Sigma/G$ and using that the actions of $G$ and $H$ commute, we get two measure-preserving actions $G\curvearrowright D_H$ and $H\curvearrowright D_G$. One checks that in restriction to $X$, we have an isomorphism of groupoids 
\[\mclG = (H\ltimes D_G)_{|X} \simeq (G\ltimes D_H)_{|X}~,\]
so $\mclG$ comes equipped with two cocycles that are restrictions of cocycles arising from group actions as in Example~\ref{cocycles from pmp actions are action-type}:
\[\rho_H \colon (H\ltimes D_G)_{|X} \to H, \quad \rho_G \colon (G\ltimes D_H)_{|X} \to G.\]
The cocycle $\rho_G$ has trivial kernel and, since $D_G$ has finite measure, the cocycle $\rho_H$ is of action type.

\subsection{Orbit equivalence and right-angled Artin groups}

In the context of RAAGs, the following theorem was proved in \cite{Horbez_Huang} by combining the Ornstein--Weiss theorem stating that all countably infinite amenable groups are orbit equivalent \cite{OW} with an argument of Gaboriau \cite{gaboriau_examples}.

\begin{thm}[{\cite[Corollary~5]{Horbez_Huang}}]
\label{thm:amenable_graph_product}Let $\Gamma$ be a countable graph, and $(G_v)_{v\in V(\Gamma)}$ be a family of countably infinite amenable groups. Then, the graph product of $(G_v)$ over $\Gamma$ is orbit equivalent to $A(\Gamma)$.
\end{thm}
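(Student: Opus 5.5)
This statement is cited from \cite[Corollary~5]{Horbez_Huang}, so the plan below reconstructs the argument rather than deriving it from earlier results of the paper. It combines the Ornstein--Weiss theorem \cite{OW} with Gaboriau's gluing argument for free products \cite{gaboriau_examples}, adapted to graph products.

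\textbf{Step 1 (vertex-by-vertex orbit equivalences).} Since every $G_v$ is countably infinite and amenable, Ornstein--Weiss gives, for each $v\in V(\Gamma)$, free measure-preserving actions of $G_v$ and of $\mathbb{Z}$ on a standard probability space $X_v$ with the same orbits. Equivalently, there is a measured equivalence relation $\mathcal{R}_v$ on $X_v$ generated both by a free $G_v$-action and by a free $\mathbb{Z}$-action.

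\textbf{Step 2 (assembling over $\Gamma$).} The natural candidate is the product space $X=\prod_{v\in V(\Gamma)}X_v$, which is a standard probability space because $\Gamma$ is countable. Let each $G_v$ act on the $v$-th coordinate only, through its action on $X_v$. Actions attached to distinct vertices commute, so this defines an action of the graph product $G_\Gamma$, and similarly an action of $A(\Gamma)$. By construction, the orbits of $G_v$ and of the $v$-th generator $\langle v\rangle\simeq\mathbb{Z}$ coincide coordinatewise. Hence both actions generate the same equivalence relation on $X$, namely the one generated by the subrelations $\mathcal{R}_v$.

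\textbf{Step 3 (freeness; the main obstacle).} The product action is not free: non-adjacent generators commute on the product space even though they do not commute in the group. Gaboriau's device is to introduce genericity. Replace the product action by an action on a space on which the non-adjacent pieces are in ``general position''. One option is a diagonal product with a free mixing (e.g.\ Bernoulli) action of $A(\Gamma)$. Another is to conjugate each vertex action by a generic measure-preserving automorphism, using the fact that generic automorphisms make a family of relations free. I would attempt the Baire-category approach. In the space of measure-preserving automorphisms, a generic choice of conjugators $(\theta_v)$, subject to $\theta_v$ commuting with $\theta_w$ whenever $v,w$ are adjacent, should yield an action in which the subrelations satisfy the graph-product analogue of free independence. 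The target condition is that any alternating reduced word (in the sense of Definition~\ref{defn:reduction}, with vertex-group letters) acts without fixed points. Being a countable intersection of open dense conditions, this holds generically.

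Granting Step~3, freeness for both groups follows from the same reduced-word criterion, because the normal-form theory of graph products is identical for $G_\Gamma$ and $A(\Gamma)$ and the orbits of the letters match vertexwise. The two free actions then have equal orbits, which is orbit equivalence by Remark~\ref{rem:oe-me}. The hardest point is arranging simultaneous commutation for adjacent vertices together with genericity for non-adjacent ones. I expect to handle it by induction along an enumeration of $V(\Gamma)$, choosing each new conjugator generically within the centraliser of the previously fixed commuting ones.
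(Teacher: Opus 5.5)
The paper does not reprove this statement; it quotes \cite[Corollary~5]{Horbez_Huang}, describing it as Ornstein--Weiss combined with a Gaboriau-type argument. Your Steps~1 and~2 are correct and consistent with those ingredients. Your remark is also right: once both families of vertex actions are free and adjacent ones commute, freeness of the $G_\Gamma$- and $A(\Gamma)$-actions depends only on the vertex relations.

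The gap is Step~3. It carries the whole content of the theorem, and you only sketch it as something that ``should'' hold generically. As formulated it does not work, for three reasons.

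\emph{Commutation.} Requiring $\theta_v\theta_w=\theta_w\theta_v$ for adjacent $v,w$ does not make the conjugated vertex actions $\theta_v\alpha_v\theta_v^{-1}$ and $\theta_w\alpha_w\theta_w^{-1}$ commute (take $\theta_w=\mathrm{id}$ and $\theta_v$ arbitrary). So you do not even obtain actions of $G_\Gamma$ and $A(\Gamma)$, and you would need this simultaneously for the $G_v$- and the $\langle v\rangle$-actions.

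\emph{Density.} The density of the freeness conditions inside the constrained set of conjugators is asserted, not proved, and it is exactly the non-trivial point.

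\emph{Diagonal product.} Your alternative, a diagonal product with a Bernoulli action of $A(\Gamma)$, does make the $A(\Gamma)$-action free. But it destroys the equality of orbits unless you specify how $G_\Gamma$ acts on the new factor.

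The missing idea is to let $G_\Gamma$ act on the extra factor through the orbit equivalence cocycle. This removes any need for genericity.

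\emph{The cocycle.} On $X=\prod_v X_v$ with the coordinatewise actions of Step~2, for $g\in G_v$ define $n(g,x_v)\in\mathbb{Z}$ by $g\cdot x_v=v^{n(g,x_v)}\cdot x_v$. This is unique a.e.\ because $\langle v\rangle$ acts freely on $X_v$. Set $c(g,x)=v^{n(g,x_v)}\in A(\Gamma)$. This extends to a cocycle $c\colon G_\Gamma\times X\to A(\Gamma)$ with $g\cdot x=c(g,x)\cdot x$. Relations inside $G_v$ are respected by uniqueness of $n$. The commutation relations between $G_v$ and $G_w$ for adjacent $v,w$ are respected because $c(g,\cdot)$ for $g\in G_v$ depends only on $x_v$, which $G_w$ does not move, while $v$ and $w$ commute in $A(\Gamma)$.

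\emph{The actions.} Fix a free measure-preserving action of the countable group $A(\Gamma)$ on a standard probability space $Y$ (a Bernoulli shift). Let $A(\Gamma)$ act diagonally on $X\times Y$, and let $G_\Gamma$ act by $g\cdot(x,y)=(g\cdot x,c(g,x)\cdot y)$.

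\emph{Verification.} The orbits agree generator by generator, since $G_v\cdot x_v=\langle v\rangle\cdot x_v$. The $A(\Gamma)$-action is free because its action on $Y$ is. Now let $g=g_n\cdots g_1$ be reduced with $g_i\in G_{v_i}\setminus\{1\}$. Freeness of each $G_{v_i}\curvearrowright X_{v_i}$ gives, for a.e.\ $x$, $c(g,x)=v_n^{a_n}\cdots v_1^{a_1}$ with all $a_i\neq 0$. This is a reduced, hence non-trivial, element of $A(\Gamma)$, so by Fubini $g$ has a null fixed set. Both actions are therefore free with the same orbits, and Remark~\ref{rem:oe-me} concludes.
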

The following corollary is then a direct consequence of Remark~\ref{rem:graph_product_expansion}.
\begin{cor}
\label{cor:ME_clique_reduction}
For every countable graph $\Gamma$, the RAAGs $A(\Gamma)$ and $A(\clr(\Gamma))$ are orbit equivalent.
\end{cor}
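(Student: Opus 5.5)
The plan is to derive the statement directly from Remark~\ref{rem:graph_product_expansion} and Theorem~\ref{thm:amenable_graph_product}.

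First, let $f\colon\Gamma\to\clr(\Gamma)$ be a clique map onto the clique reduction. For each vertex $v$ of $\clr(\Gamma)$, the preimage $f^{-1}(v)$ is a non-empty clique of $\Gamma$, and all its vertices have the same star. Let $G_v := A(f^{-1}(v))$, which is a non-trivial free abelian group. Its rank is the cardinality of $f^{-1}(v)$, which is countable since $\Gamma$ is countable.

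Second, I will check that $A(\Gamma)$ is isomorphic to the graph product of $(G_v)_{v\in V(\clr(\Gamma))}$ over $\clr(\Gamma)$. This is the content of Remark~\ref{rem:graph_product_expansion}, but the check is short. The standard generators in a single fibre commute pairwise, so they generate $G_v$. Take generators in the fibres over distinct vertices $v,w$. If $v,w$ are adjacent in $\clr(\Gamma)$, then any preimages are adjacent in $\Gamma$, because clique preimages of a clique map are cliques; so these generators commute. If $v,w$ are not adjacent, then no preimages are adjacent, since $f$ is a graph map. Comparing presentations therefore gives the isomorphism.

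Third, I apply Theorem~\ref{thm:amenable_graph_product} to this graph product. Two conditions must be checked. The graph $\clr(\Gamma)$ is countable, since it is the image of $\Gamma$. Each $G_v$ is a countably infinite amenable group, being a non-trivial free abelian group of countable rank. The theorem then gives that the graph product, namely $A(\Gamma)$, is orbit equivalent to $A(\clr(\Gamma))$, which is the statement.

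I expect no real obstacle here. The only point needing care is that each vertex group is infinite and countable, which holds because the fibres are non-empty and countable.
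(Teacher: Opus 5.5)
Your proposal is correct and follows the paper's own route: the paper deduces the corollary directly from Remark~\ref{rem:graph_product_expansion}, which writes $A(\Gamma)$ as a graph product over $\clr(\Gamma)$ of non-trivial free abelian groups, combined with Theorem~\ref{thm:amenable_graph_product}. You additionally spell out the presentation check behind that remark and the fact that the vertex groups are countably infinite, both of which the paper leaves implicit.
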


\begin{rem}
The statement and proof given in \cite{Horbez_Huang} assume that the graph $\Gamma$ is finite. Another presentation of the argument, which readily extends to the case of a countable graph, is given in \cite[Section~11]{Escalier_Horbez} under the heading ``Construction of the pairing''.
\end{rem}

\section{From measurable embeddings of RAAGs to graph embeddings}\label{section:me_to_graph}

The goal of this section is to establish the following theorem, which is at the core of the implication (\ref{main-1})$\Rightarrow$(\ref{main-4}) from Theorem~\ref{thm:main}.

\begin{thm}
\label{th meas embedding to graph}Let $\Delta$ be a clique-reduced finite graph and $\Gamma$ a finite graph without induced squares. Assume that $A(\Delta)$ measurably embeds into $A(\Gamma)$. Then $\Delta\leq\Gamma\ext_k$.
\end{thm}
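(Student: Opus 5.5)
The argument follows the outline from the introduction. It has a groupoid part and a combinatorial part.

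\textbf{Groupoid set-up.} By Lemma~\ref{lem:me-embedding}, we obtain a measured groupoid $\mclG$ over a standard probability space $X$, with a cocycle of action type $\rhoL\colon\mclG\to A(\Delta)$ and a cocycle with trivial kernel $\rhoG\colon\mclG\to A(\Gamma)$. For each vertex $u\in V(\Delta)$, consider the subgroupoid $\mclH_u\coloneqq\rhoL^{-1}(\langle u\rangle)$. It is amenable by Facts~\ref{facts}(\ref{fact-1}), since $\rhoL$ has trivial kernel. It is of infinite type since $\rhoL$ is of action type. Using the parabolic support theory of Huang and the third-named author \cite{Horbez_Huang}, and after restricting to a positive measure Borel subset $U\subseteq X$ (the same one for all the finitely many $u$, refining successively), we attach to each $u$ a parabolic subgroup $P_u\leq A(\Gamma)$: the parabolic support of $\rhoG$ restricted to $\mclH_u$.

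\textbf{Properties of the supports.} The next task is to prove the measurable analogues of properties \ref{witness-intro-1}--\ref{witness-intro-3}.
\begin{itemize}
\item[(a)] If $u,v$ are adjacent, then $\mclH_u$ and $\mclH_v$ normalise each other by Facts~\ref{facts}(\ref{fact-3}). Stability of supports under normalisation then gives that $P_u,P_v$ normalise each other.
\item[(b)] If $P_u$ is abelian and $P_v$ normalises it, I would pull back $\rhoG^{-1}(P_u\times P_u^\perp)$ and compare with $\rhoL$. Then I would use that in $A(\Delta)$ the normaliser of $\langle u\rangle$ is $A(\st(u))$ to force $v\in\st(u)$.
\item[(c)] If $\st(u)$ is not a clique, then $\rhoL^{-1}(A(\st(u)))$ contains $\rhoL^{-1}(F_2)$, which is non-amenable by Facts~\ref{facts}(\ref{fact-2}). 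This groupoid normalises $\mclH_u$, so its $\rhoG$-image lies in $P_u\times P_u^\perp$. Non-amenability forces this product to be non-abelian. Since $\Gamma$ has no induced squares, $P_u$ and $P_u^\perp$ cannot both be non-abelian. I then need to argue that $P_u$ is abelian, using amenability of $\mclH_u$ together with a maximality argument on supports. So in this case $P_u$ is abelian and $P_u^\perp$ is non-abelian.
\end{itemize}
I expect this step, establishing (a)--(c) rigorously in the groupoid setting, to be the main technical obstacle, particularly property (b) and the abelianness in (c). I would first try to mimic the algebraic case (Remark~\ref{rem:algebraic_embeddings}), replacing subgroup containment by stable containment of subgroupoids.

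\textbf{Combinatorial step.} Given (a)--(c), define the embedding $\Delta\to\Gamma\ext_k$ as follows.
\begin{itemize}
\item If $\st(u)$ is not a clique, send $u$ to the clique $\xtype(P_u)$ of $\Gamma\ext$.
\item If $\st(u)$ is a clique, then $u$ has a unique such star because $\Delta$ is clique reduced. Its neighbours are few and pairwise adjacent. Here I would choose a fresh clique: take $Q=P_w$ for a suitable neighbour $w$ (or handle isolated vertices separately). Then use Lemma~\ref{lemma:xtype} to pick vertices $v_i\in\xtype(Q^\perp)\setminus\xtype(Z(Q^\perp))$ that are non-adjacent to everything already chosen. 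Send $u$ to the clique $\xtype(Q)\cup\{v_i\}$, or a variant of it, so that adjacency holds exactly with the correct neighbours.
\end{itemize}
It remains to check that this map is injective and that it preserves both adjacency and non-adjacency. Adjacency follows from (a), since commuting abelian parabolics span a clique. Non-adjacency follows from (b). Distinctness comes from clique-reducedness combined with (b), and from the genericity of the choices made via Lemma~\ref{lemma:xtype}. This yields $\Delta\leq\Gamma\ext_k$.
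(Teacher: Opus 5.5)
Your overall plan matches the paper's: parabolic supports from \cite{Horbez_Huang}, properties (a)--(c), non-hat vertices sent to $\xtype(P_u)$, hat vertices placed via Lemma~\ref{lemma:xtype}, and injectivity from clique-reducedness. However, the steps you leave open carry the real content, and the mechanisms you sketch for them would not work as stated.

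\textbf{The groupoid properties.} For (c), amenability of $\mclH_u$ together with minimality of supports cannot make $P_u$ abelian, because supports carry no amenability information. Already algebraically, $\langle g\rangle$ is amenable while $\supp(g)=F_2$ for $g=ab\in F_2$. The conclusion can only come from the non-amenable groupoid $\rhoL^{-1}(A(\st_\Delta(u)))$ that normalises $\mclH_u$. The missing input is a groupoid version of Servatius' centraliser theorem, \cite[Lemma~3.10]{Horbez_Huang}, in the form the paper uses: a groupoid normalising an amenable subgroupoid with support $P$ is amenable as soon as $P^\perp$ is. With it, the paper proves (c) as ``$P_u^\perp$ abelian $\Rightarrow$ $\lk_\Delta(u)$ is a clique''. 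Only afterwards does it get $P_u$ abelian, from square-freeness (Lemma~\ref{lemma nonhat has clique support}). Your dichotomy needs exactly this lemma to exclude the case ``$P_u$ non-abelian, $P_u^\perp$ abelian''.

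For (b), knowing that $\rhoG(\mclH_u)$ is central in a subgroup containing $\rhoG(\mclH_v)$ does not make $\mclH_v$ normalise $\mclH_u$. Both groupoids are defined through $\rhoL$, and such a normalisation is essentially the conclusion you want. So $N_{A(\Delta)}(\langle u\rangle)=A(\st_\Delta(u))$ cannot be brought in directly. The paper instead uses \cite[Lemma~2.26]{gurieva} to rule out $\langle u,v\rangle\cong F_2$.

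\textbf{The hat step.} This also fails as described. Since $\theta(u)$ must span a clique with every $\theta(z)=\xtype(P_z)$, $z\in\lk_\Delta(u)$, you cannot use $P_w$ for a single neighbour together with a new vertex avoiding everything already placed. You need $Q=\langle P_z\rangle_{z\in\lk_\Delta(u)}$, with new vertices in $\xtype(Q^\perp)$. This $Q$ is abelian because neighbours of a hat are non-hats, which uses clique-reducedness.

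But $Q^\perp$ may be abelian, even trivial. In Remark~\ref{rem:meas-not-alg}, with $P_{v_j}=\langle w_j\rangle$, one has $Q=\langle w_1,w_2\rangle$ and $Q^\perp=\{1\}$. Then Lemma~\ref{lemma:xtype} is unavailable, and you must instead show $\lk_\Delta(u)^\perp=\{u\}$ using (b) and set $\theta(u)=\xtype(Q)$.

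When $Q^\perp$ is non-abelian, Lemma~\ref{lemma:xtype} only avoids vertices lying outside $\xtype(Q\times Z(Q^\perp))$. So each previously placed non-neighbour $x$ of $u$ needs a vertex of $\theta(x)$ outside that set, and this is not automatic. The paper obtains it by adding link classes of hats in order of increasing link size. Suppose $\theta(x)\subseteq\xtype(Q\times Z(Q^\perp))$. Then:
\begin{itemize}
\item $x$ is adjacent to all of $\lk_\Delta(u)$, so $\lk_\Delta(u)\subsetneq\lk_\Delta(x)$;
\item by the ordering, $x$ is a non-hat with $\theta(x)=\xtype(P_x)$;
\item hence $P_x$ is abelian and central in $Q\times Q^\perp\supseteq P_u$, contradicting (b).
\end{itemize}
This is precisely where the ordering is used, and your sketch supplies neither it nor the resulting claim.
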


Theorem~\ref{th meas embedding to graph} follows directly from the combination of Propositions~\ref{prop:parabolic_support} and~\ref{prop:witness-to-graph} below.

\subsection{Parabolic witness maps}

Given a measurable embedding from $A(\Delta)$ to $A(\Gamma)$, the first step in our proof of Theorem~\ref{th meas embedding to graph} is to obtain a map at the level of parabolic subgroups, as made precise in the following definition.
\begin{defn}
\label{defn:witness}Given two  graphs $\Delta,\Gamma$, a \deffont{parabolic witness map} from $\Delta$ to $\Gamma$ is a map 
\[\begin{aligned}
V(\Delta) &\to \{\text{non-trivial parabolic subgroups of }A(\Gamma)\}\\
v&\mapsto P_v
\end{aligned}\]
such that the following properties hold for all distinct $u, v \in V(\Delta)$:
\begin{enumerate}[label={(\alph*)}]
    \item\label{witness-a} If $u$ and $v$ are adjacent in $\Delta$, then $P_u$ and $P_v$ normalise each other.
    \item\label{witness-b} If $P_u$ and $P_v$ normalise each other, and one of them is abelian, then $u$ and $v$ are adjacent in $\Delta$.
    \item\label{witness-c} If $P_v^\perp$ is abelian, then $\{v\}^\perp$ is a clique in $\Delta$.
\end{enumerate}
\end{defn}

\begin{rem}\label{rem:algebraic_embeddings}
Observe that, if $A(\Delta)$ embeds as a subgroup into $A(\Gamma)$ via an injective homomorphism $f$, then  setting $P_v\coloneqq \supp(f(v))$ for each $v\in V(\Delta)$, the map $v\mapsto P_v$ is a parabolic witness map.

Indeed, if $u,v$ are distinct vertices of $\Delta$ which commute, then $f(u)$ and $f(v)$ commute. By Theorem~\ref{thm:centraliser}, $f(u)\in  P_v \times P_v^\perp$, hence $P_u\subseteq P_v\times P_v^\perp$, proving that $P_u$ normalises $P_v$ (and vice-versa), thus \ref{witness-a} holds.

Next, if $P_u$ and $P_v$ normalise each other and, say, $P_u=\supp(f(u))$ is abelian, we know that $N_{A(\Gamma)} (P_u) = \supp(f(u))\times \supp(f(u))^\perp=Z_{A(\Gamma)} (f(u))$ by Theorem~\ref{thm:centraliser}. Since $P_v$ normalises $P_u$, we have $\supp(f(v))\leq Z_{A(\Gamma)} (f(u))$, thus $f(v) \in Z_{A(\Gamma)} (f(u))$, proving that $f(u)$ and $f(v)$ commute. Since $f$ is an embedding, $u$ and $v$ commute, and \ref{witness-b} holds.

Finally, if $P_v^\perp$ is abelian, by Theorem~\ref{thm:centraliser} again, $Z_{A(\Gamma)} (f(v))$ is a direct product of abelian groups, thus it is itself abelian. Now $f$ embeds the centraliser $A(\st_\Delta(v))$ of $v$ into the abelian centraliser of $f(v)$, hence $A(\st_\Delta(v))$ must itself be abelian: $\st_\Delta(v)$ is a clique, and \ref{witness-c} holds.
\end{rem}

The following definition will be central in the section.

\begin{defn}
\label{de:hat2}A vertex $v$ of a graph $\Gamma$ is a \deffont{hat} if $\st_\Gamma(v)$ is a clique.
\end{defn}

The following lemma says that when $\Gamma$ is square free, any parabolic witness map from $\Delta$ to $\Gamma$ has the extra property that its values on non-hat vertices are abelian parabolic subgroups. The reader is referred to Definition~\ref{def:xtype} for the extended type $\xtype(P)$ of a parabolic subgroup $P$.

\begin{lem}
\label{lemma nonhat has clique support}Let $\Gamma,\Delta$ be finite graphs, and let \[\begin{aligned}
V(\Delta) &\to \{\text{non-trivial parabolic subgroups of }A(\Gamma)\}\\
v&\mapsto P_v
\end{aligned}\] be a parabolic witness map from $\Delta$ to $\Gamma$. Assume that $\Gamma$ has no induced square and $u\in V(\Delta)$ is not a hat. Then, $P_u$ is abelian (i.e.~$\xtype(P_u)$ is a clique).
\end{lem}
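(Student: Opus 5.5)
The plan is to argue by contraposition, using only property~\ref{witness-c} of Definition~\ref{defn:witness} together with the absence of induced squares in $\Gamma$. Suppose that $P_u$ is non-abelian. We show that $P_u^\perp$ must then be abelian. Property~\ref{witness-c} then gives that $\{u\}^\perp=\lk_\Delta(u)$ is a clique, so $\st_\Delta(u)$ is a clique and $u$ is a hat, which is a contradiction. Properties~\ref{witness-a} and~\ref{witness-b} are not needed.

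The key step is that $P_u^\perp$ is abelian whenever $P_u$ is not. Write $P_u=gA(\Lambda)g^{-1}$, where $\Lambda\leq\Gamma$ is the type of $P_u$. By Definition~\ref{def:support_perp}, $P_u^\perp=gA(\Lambda^\perp)g^{-1}$, so it suffices to argue with $\Lambda$ and $\Lambda^\perp$.
\begin{itemize}
\item Since $P_u$ is non-abelian, $\Lambda$ is not a clique, so it contains two distinct non-adjacent vertices $a,b$.
\item Suppose $\Lambda^\perp$ were not a clique. It would then contain two distinct non-adjacent vertices $c,d$. These are distinct from $a$ and $b$, because $\Lambda^\perp$ is disjoint from $\Lambda$ (no vertex lies in its own link).
\item By definition of $\Lambda^\perp$, each of $c,d$ is adjacent to each of $a,b$. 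Hence $a,c,b,d$ span an induced square in $\Gamma$, contradicting the hypothesis on $\Gamma$.
\end{itemize}
So $\Lambda^\perp$ is a clique, possibly empty, and it is finite because $\Gamma$ is finite. Therefore $A(\Lambda^\perp)$ is free abelian (trivial if $\Lambda^\perp=\emptyset$, which covers the case $P_u=A(\Gamma)$), and so $P_u^\perp$ is abelian.

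To conclude, apply property~\ref{witness-c} to $u$: since $P_u^\perp$ is abelian, $\{u\}^\perp$ is a clique of $\Delta$. For a single vertex, $\{u\}^\perp=\lk_\Delta(u)$. As $u$ is adjacent to every vertex of its link, $\st_\Delta(u)$ is then a clique, i.e.\ $u$ is a hat. This contradicts the assumption, so $P_u$ is abelian. By the remark following Definition~\ref{def:xtype}, abelian finitely generated parabolic subgroups correspond exactly to those whose extended type is a finite clique, which gives the parenthetical reformulation.

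I do not expect a serious obstacle. The only point requiring care is translating ``non-abelian parabolic'' into ``type not a clique'' and identifying the type of $P_u^\perp$ with $\Lambda^\perp$. Both follow directly from Definition~\ref{def:support_perp} and the fact that $A(\Theta)$ is abelian exactly when $\Theta$ is a clique.
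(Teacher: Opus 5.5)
Your proof is correct and follows the same idea as the paper's: item~\ref{witness-c}, used contrapositively, forces $P_u^\perp$ to be non-abelian, and $P_u$ and $P_u^\perp$ cannot both be non-abelian without producing an induced square. The only difference is that you conjugate to the type $\Lambda$ and build the square $a,c,b,d$ directly in $\Gamma$. The paper instead finds the square in $\Gamma\ext$ using extended types and then invokes Corollary~\ref{cor:square_F2xF2}, so your version is marginally more elementary.
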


\begin{proof}
By Item~\ref{witness-c} from the definition of a parabolic witness map (Definition~\ref{defn:witness}),  we know that $P_u^\perp$ is not abelian, thus $\xtype(P_u^\perp)$ contains two non-adjacent vertices $x, x'$ of $\Gamma\ext$. Assuming that $\xtype(P_u)$ contains two non-adjacent vertices $y, y'$ immediately contradicts Corollary~\ref{cor:square_F2xF2}, since $x, y, x', y'$ then forms an induced square in $\Gamma\ext$. Thus, $\xtype(P_u)$ is a clique.
\end{proof}

\subsection{From measurable embeddings to parabolic witness maps}

Our next proposition extends the contents of Remark~\ref{rem:algebraic_embeddings} from algebraic to measurable embeddings. The key tool is a notion of parabolic support for a measured groupoid endowed with a cocycle towards a RAAG, coming from the work of Huang and the third-named author \cite[Section~3.3]{Horbez_Huang}.

\begin{prop}
\label{prop:parabolic_support}Let $\Delta$, $\Gamma$ be finite graphs. Assume that $A(\Delta)$ measurably embeds into $A(\Gamma)$. Then there exists a parabolic witness map from $\Delta$ to $\Gamma$.
\end{prop}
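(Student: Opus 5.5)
We follow the measured-groupoid version of Remark~\ref{rem:algebraic_embeddings}. First, Lemma~\ref{lem:me-embedding} gives a measured groupoid $\mclG$ over a standard probability space $X$, with an action-type cocycle $\rhoL\colon\mclG\to A(\Delta)$ and a trivial-kernel cocycle $\rhoG\colon\mclG\to A(\Gamma)$. For each $v\in V(\Delta)$ we consider the subgroupoid $\mclH_v:=\rhoL^{-1}(\langle v\rangle)$. Since $\rhoL$ is of action type, $\mclH_v$ is of infinite type. It is amenable by Fact~\ref{facts}(\ref{fact-1}), because $\langle v\rangle$ is amenable. By Fact~\ref{facts}(\ref{fact-3}), it is normalised by $\rhoL^{-1}(A(\st_\Delta(v)))$.

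Next we apply the parabolic support theory of \cite[Section~3.3]{Horbez_Huang} to $(\mclH_v,\rhoG)$. Up to a countable Borel partition of $X$, each restriction has a well-defined smallest parabolic subgroup $P$ such that the groupoid is stably contained in $\rhoG^{-1}(P)$. That theory also gives:
\begin{enumerate}
\item The support is invariant under normalisation. If $\mclA$ is normalised by $\mclH$, then $\mclH$ is stably contained in $\rhoG^{-1}(N(P))=\rhoG^{-1}(P\times P^\perp)$.
\item For amenable groupoids, the support is "essentially realised by a cyclic-like element", in a form that controls centralisers.
\end{enumerate}
Since $\Delta$ is finite, we can restrict to a positive-measure Borel subset $U\subseteq X$ on which every $\mclH_v$ ($v\in V(\Delta)$) has a constant support. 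We set $P_v$ to be this support. It is non-trivial: if $P_v=\{1\}$, then the trivial-kernel property would force $\mclH_v$ to be trivial, contradicting infinite type.

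Then we check the three axioms of a parabolic witness map.

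For \ref{witness-a}: if $u,v$ are adjacent, then $\mclH_u$ and $\mclH_v$ normalise each other. Normalisation invariance of supports gives that $P_u$ and $P_v$ normalise each other, exactly as in the algebraic case.

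For \ref{witness-b}: suppose $P_u$ is abelian and $P_v$ normalises $P_u$. Then $\mclH_u$ and $\mclH_v$ are both stably contained in $\rhoG^{-1}(P_u\times P_u^\perp)$, and the first factor is central there. The strategy is to show that $\langle\mclH_u,\mclH_v\rangle$ is amenable whenever $u,v$ are non-adjacent. One route is to use an amenable normal subgroupoid with abelian image to derive amenability of the quotient-like structure. The alternative is contrapositive. If $u,v$ are not adjacent, then $\langle u,v\rangle\cong F_2$, so $\rhoL^{-1}(\langle u,v\rangle)$ is non-amenable by Fact~\ref{facts}(\ref{fact-2}). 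We then want to contradict this using $\rhoG$ and the structure of $P_u$ together with its normaliser, in the spirit of the argument in \cite{Horbez_Huang}.

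For \ref{witness-c}: suppose $P_v^\perp$ is abelian. Then $\rhoL^{-1}(A(\st_\Delta(v)))$, which normalises $\mclH_v$, is stably contained in $\rhoG^{-1}(P_v\times P_v^\perp)$. We want this image to be essentially amenable, i.e.\ the relevant part of $P_v$ to act abelianly. If $\{v\}^\perp$ were not a clique, $A(\st_\Delta(v))$ would contain $F_2$, and Fact~\ref{facts}(\ref{fact-2}) would yield non-amenability, a contradiction.

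The main obstacle is that $P_v$ itself may be non-abelian. In \ref{witness-b} and \ref{witness-c} one must get a genuine amenability conclusion for a groupoid whose image lies in $P\times P^\perp$. Nothing like Servatius' centraliser theorem (Theorem~\ref{thm:centraliser}) is available pointwise. The first thing I would try is the statement from \cite{Horbez_Huang} that an amenable groupoid normalising an infinite-type subgroupoid with support $P$ has its "centralising part" contained in $\rhoG^{-1}$ of a product of cyclic groups times $P^\perp$. This is a groupoid analogue of Theorem~\ref{thm:centraliser}, and it would let us mimic Remark~\ref{rem:algebraic_embeddings} line by line.
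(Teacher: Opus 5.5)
Your setup, the non-triviality of $P_v$, and Item~\ref{witness-a} agree with the paper: you take supports from \cite[Lemma~3.7]{Horbez_Huang}, then use \cite[Lemma~3.8]{Horbez_Huang} together with minimality of the support. However, Items~\ref{witness-b} and~\ref{witness-c} are not actually proved.

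For~\ref{witness-c}, you correctly locate the difficulty: $P_v$ may be non-abelian, so knowing that the normaliser maps into $P_v\times P_v^\perp$ gives no amenability. For instance, an amenable infinite-type subgroupoid whose $\rhoG$-image lies in $\langle ab\rangle\subseteq F_2=\langle a,b\rangle$ has support $F_2$. Your proposed remedy, a groupoid Servatius theorem placing a ``centralising part'' in cyclic groups times $P^\perp$, is not an available statement. Moreover, as you formulate it, it assumes the normalising groupoid is amenable, which is exactly what must be shown. The result that closes the argument is \cite[Lemma~3.10]{Horbez_Huang}: if an \emph{amenable} subgroupoid with parabolic support $P$ is normalised by $\mathcal{H}$, and $P^\perp$ is amenable, then $\mathcal{H}$ is amenable. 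Apply it to $\mclA_v=\rhoL^{-1}(\langle v\rangle)_{|U}$, which is amenable by Fact~\ref{facts}(\ref{fact-1}), and to its normaliser $\rhoL^{-1}(A(\lk_\Delta(v)))_{|U}$. Fact~\ref{facts}(\ref{fact-2}) then finishes as you say.

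For~\ref{witness-b}, neither of your routes is carried out, and neither works as stated. The groupoid generated by $\mclA_u$ and $\mclA_v$ has $\rhoG$-image in $P_u\times P_u^\perp$, which can be non-amenable, so centrality of $\rhoG(\mclA_u)$ gives no amenability by itself. Even if that groupoid were amenable, it is in general strictly smaller than $\rhoL^{-1}(\langle u,v\rangle)_{|U}$, and $\rhoL$ need not be of action type on it, so Fact~\ref{facts}(\ref{fact-2}) yields no contradiction. On the full preimage $\rhoL^{-1}(\langle u,v\rangle)$, in turn, you have no control over $\rhoG$. The missing ingredient is \cite[Lemma~2.26]{gurieva}, which the paper applies with $\omega=\rhoL$, $\rho=\rhoG$, $A=\langle u\rangle$ and $K=\langle v\rangle$. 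From $\rhoG(\mclA_u)\subseteq P_u\subseteq Z(P_u\times P_u^\perp)$ and $\rhoG(\mclA_v)\subseteq P_v\subseteq P_u\times P_u^\perp$, it deduces that $\langle u,v\rangle$ is not a non-abelian free group, so $u$ and $v$ are adjacent. Your proposal lacks a statement of this kind, one that turns ``central image under $\rhoG$'' into a contradiction via the action-type cocycle to the free group $\langle u,v\rangle$.
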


\begin{proof}
Since $A(\Delta)$ measurably embeds into $A(\Gamma)$, Lemma~\ref{lem:me-embedding} gives the existence of a measured groupoid $\mclG$ over a standard probability space $X$, with two cocycles 
\[\rhoL\colon\mclG\to A(\Delta) \quad \text{~and~} \quad \rhoG\colon \mclG \to \AG,\] such that $\rhoL$ is of action type and $\rhoG$ has trivial kernel. 

We now apply \cite[Lemma~3.7]{Horbez_Huang}, with $\mathbb{P}$ the set of all parabolic subgroups of $A(\Gamma)$, to define the parabolic support (with respect to $\rhoG$) of each of the finitely many subgroupoids $\rhoL^{-1}(\gen v)$, with $v$ varying in the finite set $V(\Delta)$. More precisely, \cite[Lemma~3.7]{Horbez_Huang} gives the existence a positive measure Borel subset $U$ of $X$, such that for every $v\in V(\Delta)$, there exists a parabolic subgroup $P_v$ of $A(\Gamma)$ with the following properties: 
\begin{enumerate}
    \item\label{support1} one has $\rhoG(\rhoL^{-1}(\gen v)_{|U})\subseteq P_v$;
    \item\label{support2} for every parabolic subgroup $Q$ of $\AG$, and every Borel subset $V\subseteq U$ of positive measure, if $\rhoG(\rhoL^{-1}(\gen v)_{|V})\subseteq Q$ then $P_v\subseteq Q$.
\end{enumerate}
For every $v\in V(\Delta)$, we now set \[\mclA_v := \rhoL^{-1}(\gen v)_{|U}.\]
Let us show that $v\mapsto P_v$ is a parabolic witness map.

We first check that $P_v$ is non-trivial for any vertex $v$ of $\Delta$. The subgroupoid $\mclA_v$ is of infinite type because the resctriction of $\rho_{\Delta}$ to ${\mclG_{|U}}$ is of action type. The cocycle $\rho_{\Gamma}$ having trivial kernel, $\rho_{\Gamma}(\mclA_v)$ is an infinite subset of $P_v$. This forces $P_v$ to be infinite.

To prove Item~\ref{witness-a}, fix two adjacent vertices $u$ and $v$ in $\Delta$. Since $\langle u\rangle$ and $\langle v\rangle$ commute, the subgroupoids $\mclA_u$ and $\mclA_v$ normalise each other (Facts~\ref{facts}(\ref{fact-3})). By \cite[Lemma~3.8]{Horbez_Huang}, up to replacing $U$ by a conull Borel subset, we have $\rhoG(\mclA_u)\subseteq P_v\times P_v^\perp$ and $\rhoG(\mclA_v)\subseteq P_u\times P_u^\perp$. By minimality of the parabolic support (Property~(\ref{support2}) above), $P_u\subseteq P_v\times P_v^\perp$ and $P_v\subseteq P_u\times P_u^\perp$, hence $P_u$ and $P_v$ normalise each other.

Let us prove Item~\ref{witness-b}. Take two distinct vertices $u$ and $v$ of $\Delta$. Assume that $P_u$ is abelian and $P_u$ and $P_v$ normalise each other. Then, 
$$\rhoG(\mclA_u)\subseteq P_u\subseteq Z(P_u\times P_u^\perp)$$
and
$$\rhoG(\mclA_v)\subseteq P_v\subseteq P_u\times P_u^\perp.$$
It thus follows from \cite[Lemma~2.26]{gurieva}, applied with $\w=\rhoL$ and $\rho=\rhoG$, and with $A=\gen{u}$ and $K=\gen{v}$, that $\gen{u,v}$ cannot be a non-abelian free group. So necessarily, $u$ and $v$ are adjacent in $\Delta$.

Finally, to prove Item~\ref{witness-c}, fix a vertex $v$ of $\Delta$ such that $P_v^\perp$ is abelian. Let
\[\mclH_v := \rhoL^{-1}\left(A(\lk_\Delta(v))\right)_{|U}.\]
The group $\langle v\rangle$ is amenable and the cocycle $\rho_\Delta$ has trivial kernel, hence the subgroupoid $\mclA_v$ is amenable (Facts~\ref{facts}(\ref{fact-1})). As $\langle v\rangle$ is normalised by $A(\lk_\Delta(v))$, the subgroupoid $\mclA_v$ is normalised by $\mclH_v$ (Facts~\ref{facts}(\ref{fact-3})). By \cite[Lemma~3.10]{Horbez_Huang}, the amenability of $P_v^\perp$ forces $\mclH_v$ to be amenable.  If $A(\lk_\Delta (v))$ were non-abelian, it would contain a non-abelian free group; as $\rhoL$ is of action type, this would imply that $\mclH_v$ is non-amenable (Facts~\ref{facts}(\ref{fact-2})), a contradiction. Thus, $A(\lk_\Delta(v))$ is abelian, i.e.\ $\lk_\Delta(v)$ is a clique.
\end{proof}

\subsection{From parabolic witness maps to graph embeddings: a simple case}

In this section, under extra assumptions on the graphs $\Delta$ and $\Gamma$, given a parabolic witness map from $\Delta$ to $\Gamma$, we build an embedding $\Delta\leq\Gamma\ext$, without having to pass to the clique graph $\Gamma\ext_k$. The content of this section is not needed for the proof of Theorem~\ref{th meas embedding to graph} and for our main result, Theorem~\ref{thm:main}. It serves as a warm-up for the argument in the next section, and will be used in the proof of Theorem~\ref{thm:meas-algebraic}, generalising Theorem~\ref{thm:intro-meas-algebraic} from the introduction.

\begin{prop}
\label{prop:witness-to-graph-warmup}Let $\Delta$ be a finite clique-reduced graph where no link of a vertex is a clique with at least two vertices. Let $\Gamma$ be a graph without induced triangles or squares. Assume that there exists a parabolic witness map from $\Delta$ to $\Gamma$. Then $\Delta\leq \Gamma\ext$.
\end{prop}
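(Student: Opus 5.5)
The plan is to send each vertex of $\Delta$ to a single vertex of $\Gamma\ext$. Non-hat vertices will be placed directly using the witness map; hat vertices will then be placed by a genericity argument based on Lemma~\ref{lemma:xtype}. Fix the parabolic witness map $v\mapsto P_v$.

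\emph{Non-hat vertices.} Let $u\in V(\Delta)$ be a non-hat. By Lemma~\ref{lemma nonhat has clique support}, $P_u$ is abelian. Since $\Gamma$ has no triangles, its type is a single vertex or a single edge of $\Gamma$. By Item~\ref{witness-c}, $P_u^\perp$ is non-abelian. If the type of $P_u$ were an edge $\{a,b\}$, then $\{a,b\}^\perp$ would be empty because $\Gamma$ is triangle-free, so $P_u^\perp$ would be trivial, a contradiction. Hence $P_u=\langle x_u\rangle$ for a vertex $x_u$ of $\Gamma\ext$, and I set $u\mapsto x_u$. For cyclic parabolics, Theorem~\ref{thm:centraliser} gives $N(P_v)=P_v\times P_v^\perp=Z(x_v)$. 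Therefore ``$P_u,P_v$ normalise each other'' is equivalent to ``$x_u,x_v$ commute'', and Items~\ref{witness-a} and~\ref{witness-b} give that $u,v$ are adjacent in $\Delta$ iff $x_u,x_v$ are equal or adjacent. Injectivity: suppose $x_u=x_v$. Then $u,v$ are adjacent. Moreover, for any neighbour $w\neq v$ of $u$, $P_w$ normalises the abelian $P_u=P_v$, so $w$ is adjacent to $v$ by~\ref{witness-b}. Thus $\st_\Delta(u)=\st_\Delta(v)$, contradicting clique-reducedness. So the non-hat vertices embed as an induced subgraph of $\Gamma\ext$.

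\emph{Hat vertices.} A hat $u$ has a clique link, so by hypothesis $\lk_\Delta(u)$ has at most one vertex. Suppose $\lk_\Delta(u)=\{w\}$. Then $w$ is not a hat: otherwise $\Delta\supseteq$ an edge component with $\st(u)=\st(w)$, contradicting clique-reducedness. I treat the non-hats $w$ one at a time. For each such $w$, I apply Lemma~\ref{lemma:xtype} with $Q=P_w$ (cyclic, with $Q^\perp$ non-abelian), and with $n$ equal to the number of hats attached to $w$. The finite avoidance family consists of all vertices already placed other than $x_w$. Since the type of $Q^\perp$ is the link of a vertex in a triangle-free graph, it is edgeless with at least two vertices, so $Z(Q^\perp)$ is trivial. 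Hence $\xtype(Q\times Z(Q^\perp))=\{x_w\}$, and the avoidance family is admissible. The lemma returns pairwise distinct, pairwise non-adjacent vertices of $\xtype(Q^\perp)$, hence adjacent to $x_w$, and neither equal nor adjacent to anything placed before. I assign these to the hats attached to $w$. Induced-ness is preserved at each step.

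\emph{Isolated vertices (expected main obstacle).} It remains to place isolated vertices of $\Delta$: I need vertices of $\Gamma\ext$ far from the finite set already placed. If $\Gamma$ is disconnected, Lemma~\ref{lem:extension_properties} gives infinitely many components of $\Gamma\ext$, and I use fresh components. If $\Gamma$ is connected and not a join, $\Gamma\ext$ has infinite diameter, and I pick vertices pairwise at distance $\geq 2$ and at distance $\geq 2$ from the placed finite set. The remaining case is that $\Gamma$ is a star or a single vertex (for triangle- and square-free joins). Here I would argue directly from the witness map. In that case $\Gamma\ext$ is the join of the centre $c$ with an infinite edgeless set. Every non-hat must map to $c$, since only $c$ has non-abelian perp, so there is at most one non-hat. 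If there is one non-hat and also an isolated hat $u$, I would try to derive a contradiction from Item~\ref{witness-b}: every parabolic contained in $A(\Gamma)=\langle c\rangle\times F$ normalises $\langle c\rangle$. If there is no non-hat, $\Delta$ is edgeless; then I would use the free factor when it exists, and otherwise Item~\ref{witness-b} to bound $|V(\Delta)|$ when $A(\Gamma)$ is abelian. This case analysis is where I expect the real difficulty.
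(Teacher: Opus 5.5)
Your proposal is correct and follows the paper's proof. Non-hats are sent to the generator of their necessarily cyclic support, injectivity comes from clique-reducedness via Items~\ref{witness-a} and~\ref{witness-b}, and hats with one neighbour $w$ are placed by applying Lemma~\ref{lemma:xtype} to $Q=P_w$. The only divergence is isolated vertices, the case you flagged as the main obstacle: the paper avoids your case split on the shape of $\Gamma$ by applying Lemma~\ref{lemma:xtype} with $Q=\{1\}$ (so $Q^\perp=A(\Gamma)$), after checking that no already-placed vertex is central in $A(\Gamma)$ (for non-hats this is Item~\ref{witness-b}), which is the same observation as in your star case; your sketch there already goes through, since a central $\langle c\rangle$ and any $P_u$ normalise each other.
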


\begin{rem}
When the conclusion of Proposition~\ref{prop:witness-to-graph-warmup} holds, Theorem~\ref{thm:main_Kim_Koberda} provides an algebraic embedding of $A(\Delta)$ into $A(\Gamma)$ right away.

We also mention that under the extra assumption that $\Delta$ has no vertices of valence $0$ or $1$, the proof simplifies further, and Step~2 is not needed.
\end{rem}

\begin{proof}
Note that if $A(\Gamma)=\{1\}$, then unless $A(\Delta)=\{1\}$ there cannot be any parabolic witness map from $\Delta$ to $\Gamma$. If $A(\Gamma)$ is infinite abelian, then Item~\ref{witness-b} from Definition~\ref{defn:witness} ensures that $\Delta$ is a clique; being clique reduced it must be empty or a vertex and the conclusion holds. From now on we assume that $A(\Gamma)$ is non-abelian.

Let $\widehat\Delta\leq\Delta$ be the subgraph of $\Delta$ induced by vertices of valence $0$ or $1$. Our assumption on $\Delta$ ensures that these are exactly the hat vertices of $\Delta$ in the sense of Definition~\ref{de:hat2}. Denote by $v\mapsto P_v$ a parabolic witness map from $\Delta$ to $\Gamma$.

\medskip

\textbf{Step 0:} We show that for every $v\in V(\Delta)\setminus V(\widehat{\Delta})$, the parabolic subgroup $P_v$ is cyclic, and $P_v^\perp$ is a non-abelian free group. If in addition $v$ is not central in $A(\Delta)$, then $P_v$ is not central in $A(\Gamma)$.

Indeed, let $v\in V(\Delta)\setminus V(\widehat\Delta)$. Since $v$ is not a hat, Item \ref{witness-c} from the definition of a parabolic witness map (Definition~\ref{defn:witness}) shows that $P_v^\perp$ is non-abelian. Then, the graph $\Gamma$ having no triangles nor squares forces $P_v$ to be a cyclic parabolic subgroup, and $P_v^\perp$ to be a non-abelian free group. Finally, if $v$ is not central in $A(\Delta)$, we can find $u\in V(\Delta)$ that is distinct from and not adjacent to $v$. It then follows from Item~\ref{witness-b} from Definition~\ref{defn:witness} that $P_v$ does not commute to $P_u$, in particular $P_v$ is not central in $A(\Gamma)$.

\medskip

\textbf{Step 1:} By Step~0, the map
\[\theta\colon \begin{array}{ccc}
V(\Delta)\setminus V(\widehat\Delta)&\to &V(\Gamma\ext)\\
v&\mapsto &\xtype(P_v)
\end{array}\]
is well defined. We claim that it extends to an embedding $\Delta\setminus\widehat\Delta\leq\Gamma\ext$ (here we write $\Delta\setminus\widehat{\Delta}$ as a shortcut for the subgraph of $\Delta$ induced by vertices not in $\widehat{\Delta}$).

Indeed, Items~\ref{witness-a} and~\ref{witness-b} from Definition~\ref{defn:witness} mean that for any two vertices $u,v\in V(\Delta)\setminus V(\widehat \Delta)$, $u$ and $v$ are adjacent if and only if $\xtype(P_u)$ and $\xtype(P_v)$ are adjacent. 

We thus only need to show that the map sending $v\in V(\Delta)$ to $\xtype(P_v)\in V(\G\ext)$ is injective. It is indeed the case: for any two distinct $u,v\in V(\Delta)$, there exists $w\in V(\Delta)\setminus\{u,v\}$ adjacent to one and not to the other because $\Delta$ is clique reduced. Then, $\xtype (P_w)$ is adjacent to exactly one of $\xtype(P_u)$ and $\xtype(P_v)$, which means in particular that $\xtype(P_u)$ and $\xtype(P_v)$ are distinct vertices in $\Gamma\ext$. 

\medskip

\textbf{Step 2:} We extend the above embedding to an embedding $\Delta\leq\Gamma\ext$. 

To this end, consider a chain of induced subgraphs
\[\Delta\setminus\widehat\Delta=\Delta_0\subseteq\Delta_1\subseteq\dots\subseteq\Delta_\ell=\Delta,\] such that
for every $i\in\{1,\dots,\ell\}$, the difference $V(\Delta_i)\setminus V(\Delta_{i-1})$ consists of exactly one link-equivalence class $[v_i]_{\lk}$ of vertices of $\Delta$ (of valence $0$ or $1$). In Step~1 we have defined $\theta$ on $\Delta_0$. We now inductively extend it to a map $\theta\colon V(\Delta_i)\to V(\Gamma\ext)$ that induces an embedding $\Delta_i\leq\Gamma\ext$, and such that for every $w\in V(\Delta_i)\setminus V(\Delta_0)$, the element $\theta(w)$ is not central.

Let $i\in\{1,\dots,\ell\}$, and write $[v_i]_{\lk}=\{v_{i,1},\dots,v_{i,n}\}$. Let $Q$ be the subgroup of $A(\Gamma)$ generated by all $P_w$ with $w\in \lk_\Delta(v_i)$. We claim that $Q$ is finitely generated abelian, $Q^\perp$ is non-abelian, and no $\theta(u)$ with $u\in V(\Delta_{i-1})\setminus\lk_\Delta(v_i)$ is contained in $Q\times Z(Q^\perp)$. Indeed:
\begin{itemize}
\item If $v_i$ has valence $0$, then $Q=\{1\}$, so $Q^\perp=A(\Gamma)$ is non-abelian. Yet no $\theta(u)$ with $u\in V(\Delta_{i-1})$ is contained in $Z(A(\Gamma))$: this follows from Step~0 if $u\in V(\Delta_0)$ (noting that $u$ is not central in $A(\Delta)$ because it does not commute with $v_i$), and from our induction hypothesis otherwise.
\item If $v_i$ has valence $1$, letting $w$ be the unique vertex in $\lk_\Delta(v_i)$, we have $Q=P_w$. Since $\Delta$ is clique reduced, no connected component of $\Delta$ is an edge, so $w\in V(\Delta_0)$. In particular, Step~0 ensures that $Q$ is a cyclic parabolic subgroup, and $Q^\perp$ is a non-abelian free group, so $Q\times Z(Q^\perp)=Q=P_w$. It follows from the injectivity of $\theta$ on $V(\Delta_{i-1})$ that  no $\theta(u)$ with $u\in V(\Delta_{i-1})\setminus\lk_\Delta(v_i)$ is contained in the cyclic group $Q\times Z(Q^\perp)$.
\end{itemize}
Lemma~\ref{lemma:xtype} thus enables to find $n$ vertices $x_1,\dots,x_n\in V(\xtype(Q^\perp)\setminus\xtype(Z(Q^\perp)))$ that are pairwise non-adjacent, and distinct from and not adjacent to any of the vertices $\theta(u)$ with $u\in V(\Delta_{i-1})\setminus\lk_\Delta(v_i)$. Setting $\theta(v_{i,j}):=x_j$ completes the induction. 
\end{proof}

\subsection{From parabolic witness maps to graph embeddings: the general case}

In order to complete our proof of Theorem~\ref{th meas embedding to graph}, we are left with showing the following proposition.

\begin{prop}
\label{prop:witness-to-graph}Let $\Delta$ be a finite clique-reduced graph and $\Gamma$ a graph without any induced square. Assume that there exists a parabolic witness map from $\Delta$ to $\Gamma$. Then $\Delta\leq\Gamma\ext_k$. 
\end{prop}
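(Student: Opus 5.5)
Write $v\mapsto P_v$ for the parabolic witness map. The plan mirrors the proof of Proposition~\ref{prop:witness-to-graph-warmup}. First treat the non-hat vertices, which are sent to cliques of $\Gamma\ext$. Then add the hat vertices one link-equivalence class at a time, using Lemma~\ref{lemma:xtype} to find fresh vertices far from everything already placed. The degenerate cases ($A(\Gamma)$ trivial or abelian) are dispatched as in the warm-up: Item~\ref{witness-b} forces $\Delta$ to be a clique, hence a single vertex since it is clique reduced.

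\textbf{Step 1: non-hat vertices.} Let $\widehat\Delta$ be the subgraph induced by hat vertices. By Lemma~\ref{lemma nonhat has clique support}, for each $v\notin\widehat\Delta$ the group $P_v$ is abelian, so $\theta(v):=\xtype(P_v)$ is a finite non-empty clique of $\Gamma\ext$, i.e.\ a vertex of $\Gamma\ext_k$. For two abelian parabolics $P_u,P_v$, mutual normalisation is equivalent to $P_u\subseteq P_v\times P_v^\perp$. That in turn says that $\xtype(P_u)\cup\xtype(P_v)$ spans a clique, since $N(P_v)=P_v\times P_v^\perp$ and the centraliser theorem applies. Items~\ref{witness-a} and~\ref{witness-b} therefore give: $u\sim v$ in $\Delta$ if and only if $\theta(u),\theta(v)$ are adjacent or equal in $\Gamma\ext_k$. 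Injectivity follows from clique-reducedness as in the warm-up. If $\theta(u)=\theta(v)$, pick $w$ in the symmetric difference of the stars. If $w$ is non-hat, we get a contradiction directly. If $w$ is a hat, I would need to rechoose; to avoid this I would use a weaker comparison (stars within $\Delta\setminus\widehat\Delta$) and accept that some distinct non-hats may share image. In that case I would break ties afterwards by enlarging cliques: replace $\theta(v)$ by a clique containing it that stays inside $N$. I expect this tie-breaking to be delicate.

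\textbf{Step 2: hats.} A hat $v$ has $\lk(v)$ a clique $L$. Order the link-equivalence classes of hats and add them inductively. For a class $[v_i]_{\lk}=\{v_{i,1},\dots,v_{i,n}\}$ with link $L$, consider two subcases.

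\emph{Case (i): $L$ contains a non-hat vertex.} Its star is not a clique, so the span of the cliques $\theta(w)$, $w\in L\setminus\widehat\Delta$, is a clique $C$. Let $Q$ be the corresponding abelian parabolic. Then $Q^\perp$ is non-abelian, using Item~\ref{witness-c} for a non-hat $w\in L$ together with the square-free property as in Lemma~\ref{lemma:xtype}. Lemma~\ref{lemma:xtype} supplies pairwise non-adjacent vertices $x_1,\dots,x_n\in\xtype(Q^\perp)\setminus\xtype(Z(Q^\perp))$, far from all previously placed cliques. We set $\theta(v_{i,j})=\{x_j\}\cup C'$, where $C'$ is a sub-clique chosen so that adjacency to the images of $L$ holds and non-adjacency to other vertices is ensured by the $x_j$. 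Because $x_j$ is non-adjacent to every vertex of the other placed cliques (we feed all their vertices as the family $(x_u)$), non-adjacency in $\Gamma\ext_k$ is automatic.

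\emph{Case (ii): every vertex of $L$ is a hat.} Then $L\cup[v_i]$ spans a connected component that is a clique, hence a single vertex by clique-reducedness. Such isolated vertices are placed using $Q=\{1\}$, as in the warm-up.

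The main obstacle is Case (i): producing images of hats that are adjacent in $\Gamma\ext_k$ precisely to the images of $L$. The relevant part of $L$ lies in $Z(Q^\perp)$-type directions, and we must verify the hypothesis of Lemma~\ref{lemma:xtype} that previously placed vertices avoid $\xtype(Q\times Z(Q^\perp))$. Where they do not, I would first modify $\theta$ on $L$ by absorbing $\xtype(Z(Q^\perp))$ into the cliques, or treat those vertices as part of $C$.
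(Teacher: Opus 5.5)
Your architecture (non-hats sent to $\xtype(P_v)$, hats added one link class at a time via Lemma~\ref{lemma:xtype}) is the paper's, but Case~(i) rests on a false claim. Item~\ref{witness-c} applied to a non-hat $w\in L$ only says that $P_w^\perp$ is non-abelian. The group $Q^\perp$ is contained in every such $P_w^\perp$, and it can perfectly well be abelian. Take the pentagons of Remark~\ref{rem:meas-not-alg}: the map $v_j\mapsto\langle w_j\rangle$, $\hat v\mapsto\langle w_1,w_2\rangle$ is a parabolic witness map. For the hat $\hat v$ one gets $Q=\langle w_1,w_2\rangle$ and $Q^\perp=\{1\}$, so Lemma~\ref{lemma:xtype} provides no vertex $x_j$ at all. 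The missing idea is a separate case for $Q^\perp$ abelian:
\begin{itemize}
\item by Item~\ref{witness-a}, every $u\in\lk_\Delta(v_i)^\perp$ has $P_u$ normalising each $P_z$, $z\in L$, hence $P_u\subseteq Q\times Q^\perp$, which is now abelian;
\item Item~\ref{witness-b} then forces $\lk_\Delta(v_i)^\perp=\{v_i\}$, so the link class is a singleton;
\item one sets $\theta(v_i):=\xtype(Q)$, which is the embedding $\hat v\mapsto[w_1,w_2]$ of Remark~\ref{rem:meas-not-alg}. The clique condition against earlier vertices follows from the induction hypothesis together with $\lk_\Delta(v_i)^\perp=\{v_i\}$.
\end{itemize}
Note also that in a clique-reduced graph every neighbour $z$ of a hat $v$ is a non-hat, since $\st_\Delta(v)\subsetneq\st_\Delta(z)$. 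So $L$ consists of non-hats, your Case~(ii) only concerns isolated vertices, and in the non-abelian case $\theta(v_{i,j}):=\{x_j\}$ already works without any $C'$.

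Second, when $Q^\perp$ is non-abelian you leave the hypothesis of Lemma~\ref{lemma:xtype} unverified. Feeding \emph{all} vertices of the earlier cliques is not legitimate: an earlier $\theta(u)$ with $u\notin\st_\Delta(v_i)$ may contain vertices of $\xtype(Q)$, for example an earlier hat whose link meets $L$. What is needed is one vertex $x_u\in\theta(u)\setminus\xtype(Q\times Z(Q^\perp))$ for each such $u$. This is where the order of the link classes matters: the paper adds them by non-decreasing $|\lk_\Delta(v_i)|$. Suppose $\theta(u)\subseteq\xtype(Q\times Z(Q^\perp))$. Then:
\begin{itemize}
\item the induction hypothesis makes $u$ adjacent to all of $L$, so $\lk_\Delta(v_i)\subsetneq\lk_\Delta(u)$;
\item by the ordering, $u$ is not a hat, so $\theta(u)=\xtype(P_u)$;
\item hence $P_u\subseteq Q\times Z(Q^\perp)$ is abelian and centralised by $P_{v_i}\subseteq Q\times Q^\perp$, contradicting Item~\ref{witness-b}.
\end{itemize}
Your proposed repairs (absorbing $\xtype(Z(Q^\perp))$ into the images of $L$, enlarging cliques to break ties) would alter adjacencies already established, and they are not needed. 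Likewise, do not try to get injectivity in Step~1. Instead, maintain throughout the induction the property that $\theta(u)\cup\theta(v)$ spans a clique if and only if $u,v$ are adjacent or equal. Only at the end use clique-reducedness, which then works whether the separating vertex $w$ is a hat or not.
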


\begin{proof}
Denote by $\widehat \Delta$ the subgraph of $\Delta$ induced by hat vertices. 

\medbreak

\textbf{Step 1: The map on non-hat vertices.} Lemma~\ref{lemma nonhat has clique support} enables us to define a map
\[\theta \colon\begin{array}{ccc}
     V(\Delta)\setminus V(\widehat \Delta)  & \to & V(\Gamma\ext_k) \\
     v&\mapsto & \xtype(P_v)
\end{array}\]
We will now prove that $\theta$ has the following property:
\begin{enumerate}[label = ($\bigstar$)$_0$]
\item \label{condition1} For any two vertices $u,v\in V(\Delta)\setminus V(\widehat\Delta)$, the union $\theta(u)\cup\theta(v)$ spans a clique if and only if $u$ and $v$ are adjacent or equal.
\end{enumerate}

First, let $u,v\in V(\Delta)\setminus V(\widehat\Delta)$ be distinct vertices such that $\xtype(P_u)\cup \xtype(P_v)$ spans a clique in $\G\ext_k$. Then $P_u$ and $P_v$ are abelian and normalise each other. By Item~\ref{witness-b} of Definition~\ref{defn:witness}, $u$ and $v$ are adjacent. 

Conversely, let $u,v\in V(\Delta)\setminus V(\widehat\Delta)$ be two adjacent vertices. By Item~\ref{witness-a} of Definition~\ref{defn:witness}, $P_u\subseteq P_v\times P_v^\perp.$
But $P_u$ is abelian, so $P_u\cap P_v^\perp$ is abelian. Thus $\gen{P_v, P_u}$ is abelian, which means that $\xtype(P_v)\cup\xtype(P_u)$ spans a clique. In other words, $\xtype(P_v)$ and $\xtype(P_u)$ are adjacent or equal in $\G\ext_k$.
\medbreak
\textbf{Step 2: Extension to hat vertices.} 
We now extend the map $\theta$ to the whole graph $\Delta$, adding all link-equivalence classes of hat vertices one at a time, by increasing size of their links. More precisely, consider a chain of induced subgraphs
\[\Delta\setminus\widehat\Delta=\Delta_0\subseteq\Delta_1\subseteq\dots\subseteq\Delta_\ell=\Delta,\] satisfying the following two conditions:
\begin{itemize}
\item for every $i\in\{1,\dots,\ell\}$, $V(\Delta_i)\setminus V(\Delta_{i-1})$ consists of exactly one link-equivalence class $[v_i]_{\lk}$ of vertices of $\Delta$;
\item if $i\le j$, then $|\lk_\Delta(v_i)|\le |\lk_\Delta(v_j)|$.
\end{itemize}
We now inductively extend $\theta$ to a map $\theta\colon V(\Delta_i)\to V(\Gamma\ext_k)$ satisfying the following property, extending Condition~\ref{condition1} which only gives the case $i=0$:

\begin{enumerate}[label = ($\bigstar$)$_i$, ref = ($\bigstar$)]
\item\label{condition} For any two vertices $u,v\in V(\Delta_i)$, the union $\theta(u)\cup\theta(v)$ spans a clique if and only if $u$ and $v$ are adjacent or equal.
\end{enumerate}

The map $\theta$ has already been defined on $\Delta_0$ in Step~1. We now let $i\in\{1,\dots,\ell\}$, and assume that $\theta$ has been defined on $\Delta_{i-1}$. We now aim to define $\theta(w)$ for every $w\in [v_i]_{\lk}$.

Let us show that there are only non-hat vertices in $\lk_\Delta(v_i)$. For $z\in\lk_\Delta (v_i)$, we have $\st_\Delta (v_i)\subseteq \st_\Delta (z)$ because $\st_\Delta(v_i)$ is a clique ($v_i$ is a hat). The graph $\Delta$ is clique reduced so $\st_\Delta (v_i)\subsetneq \st_\Delta (z)$. Thus, there exists a vertex $u\in\st_\Delta(z)\setminus\st_\Delta (v_i)$. Then, $\lk_\Delta(z)$ contains $u$ and $v_i$ that are not adjacent, so $z$ is not a hat.

As a consequence of our definition of $\theta$ on $\Delta_0$ in Step~1, $\theta(z)=\xtype(P_z)$ for any $z\in\lk_\Delta(v_i)$. All $z\in \lk_\Delta(v_i)$ are pairwise adjacent and in $\Delta_0$, thus by Condition~\ref{condition1}, the union of all $\theta(z)$, $z\in \lk_\Delta(v_i)$ induces a clique of $\Gamma\ext$. Therefore, the subgroup \[Q:=\gen{P_z}_{z\in \lk_\Delta(v_i)}\]
is a parabolic abelian subgroup of $\AG$.

\begin{claim}\label{Pu dans KKperp}
For every $u\in\lk_\Delta( v_i)^\perp$, one has $P_u\subseteq Q\times Q^\perp$.
\end{claim}
\begin{subproof}
Let $u\in\lk_\Delta( v_i)^\perp$. Item~\ref{witness-a} from the definition of a parabolic witness map (Definition~\ref{defn:witness}) ensures that for every $z\in\lk_\Delta( v_i)$, $P_u$ normalises $P_z$. Therefore $P_u$ normalises $Q$, i.e.\ $P_u\subseteq Q\times Q^\perp$.  
\end{subproof}

We will now define $\theta$ on $[v_i]_{\lk}$, distinguishing whether $Q^\perp$ is abelian or not.

\medbreak

$\drsh$ \textbf{Case A:} $Q^\perp$ is abelian. 

For every $u\in\lk_\Delta(v_i)^\perp$, Claim~\ref{Pu dans KKperp} gives $P_u\subseteq Q\times Q^\perp$. In particular, $P_{v_i}\subseteq Q\times Q^\perp$. But this means that $\gen {P_u,P_{v_i}}\subseteq Q\times Q^\perp$ which makes $\gen {P_u,P_{v_i}}$ abelian. By Item~\ref{witness-b} of the definition of a parabolic witness map, $u$ and $v_i$ are equal or adjacent. In fact they are equal because $u\notin \lk_\Delta(v_i)$. 
As this is true for every $u\in\lk (v_i)^\perp$, we deduce that \[\lk_\Delta(v_i)^\perp=\{v_i\}=[v_i]_{\lk}~,\]
    so we only need to extend $\theta$ to $v_i$.
    Let us show that letting $\theta(v_i)=\xtype(Q)$ fulfills Condition~\ref{condition}$_{i}$. Since \ref{condition}$_{i-1}$ already holds by induction, it is enough to check \ref{condition}$_{i}$ with $v=v_i$ and $u\in V(\Delta_{i-1})$.
    
    Assume first that $u$ is adjacent to $v_i$. Then, $u\in\lk_\Delta(v_i)$ so $\theta(u)=\xtype(P_u)$ is contained in $\xtype(Q)$, so $\xtype(Q)\cup \theta(u)$ spans a clique.
    
    Conversely, assuming that $\xtype(Q)\cup \theta(u)$ spans a clique, we aim to prove that $u\in\lk_\Delta(v_i)$. Our assumption ensures that $\theta(z)\cup \theta(u)$ spans a clique for any $z\in\lk_\Delta(v_i)$ because $\theta(z)\subseteq \xtype(Q)$. As both $z$ and $u$ are in $\Delta_{i-1}$ which satisfies Condition~\ref{condition}$_{i-1}$, we get that $u$ and $z$ are adjacent or equal. In particular, if $u\notin\lk_\Delta (v_i)$, then $u$ is adjacent to every $z\in\lk_\Delta( v_i)$, which means that $u\in\lk_\Delta(v_i) ^\perp=\{v_i\}$, a contradiction. Thus, $u\in\lk_\Delta(v_i)$, as desired.
    
\medbreak
$\drsh$ \textbf{Case B:} $Q^\perp$ is not abelian. 

Write $[v_i]_{\lk}=\{v_{i,1},\dots,v_{i,n}\}$. We start with the following claim.

\begin{claim}\label{claim theta u not in the centre}
For every $u\in V(\Delta_{i-1})\setminus \lk_\Delta(v_i)$, one has $\theta(u)\not\subseteq \xtype(Q\times Z(Q^\perp))$.
\end{claim}
\begin{subproof}
Let $u\in V(\Delta_{i-1})\setminus\lk_\Delta(v_i)$, and assume for the sake of contradiction that \[\theta(u)\subseteq \xtype(Q\times Z(Q^\perp)).\] For any $z\in\lk_\Delta(v_i)$, we have \[\theta(z)\subseteq \xtype(Q)\subseteq \xtype(Q\times Z(Q^\perp)),\]

so Condition~\ref{condition}$_{i-1}$ (given by our induction hypothesis) ensures that $u$ and $z$ are adjacent or equal. But $u\neq z$ because $u\notin\lk_\Delta(v_i)$, so $z\in\lk_\Delta(u)$. Therefore $\lk_\Delta(v_i) \subseteq \lk_\Delta(u)$.
Furthermore, $u$ and $v_i$ cannot be link equivalent because $u\in V(\Delta_{i-1})$ while $v_i\in V(\Delta_i)$, so we get $\lk_\Delta(v_i)\subsetneq \lk_\Delta (u)$. And our choice of the subgraphs $\Delta_i$ ensures that $u$ is not a hat, otherwise we would have $|\lk_\Delta(u)|\leq |\lk_\Delta(v_i)|$. Thus $\theta(u)=\xtype(P_u)$ by our definition of $\theta$ on $\Delta_0$. This gives us
        \[P_u\subseteq Q\times Z(Q^\perp).\]
        In addition, Claim~\ref{Pu dans KKperp} ensures that $P_{v_i}\subseteq Q\times Q^\perp$.
Therefore, $P_u$ is abelian and $P_u$ and $P_{v_i}$ normalise each other. This is in contradiction with Item~\ref{witness-b} of the definition of a parabolic witness map because $u$ and $v_i$ are not adjacent.
\end{subproof}

For every $u\in V(\Delta_{i-1})\setminus \lk_\Delta(v_i)$, Claim~\ref{claim theta u not in the centre} provides us with a vertex $x_u\in V(\G\ext)$ such that \[x_u\in V(\theta(u))\setminus V\left(\xtype\left(Q\times Z(Q^\perp)\right)\right).\]

Lemma~\ref{lemma:xtype} provides us with vertices $w_1,\dots,w_n$ of $\xtype(Q^\perp)$ such that 
\begin{itemize}
\item for $1\leq i<j\leq n$, the vertices $w_i$ and $w_j$ are distinct and non-adjacent; 
\item for $1\leq i\leq n$ and $u\in V(\Delta_{i-1})\setminus \lk_\Delta(v_i)$, the vertices $w_i$ and $x_u$ are distinct and non-adjacent. 
\end{itemize}
For every $j\in\{1,\dots,n\}$, we set $\theta(v_{i,j}):=\{w_j\}$. We now check that Condition~\ref{condition}$_{i}$ still holds for this extension of $\theta$.

We first check \ref{condition}$_{i}$ when $u,v$ both belong to $[v_i]_{\lk}$, i.e.\ $u=v_{i,j}$ and $v=v_{i,\ell}$ with $j,\ell\in\{1,\dots,n\}$. In this case $u$ and $v$ are non-adjacent, and the vertices $w_j$ and $w_\ell$ are distinct and non-adjacent, so $\theta(u)$ and $\theta(v)$ are non-adjacent. 

We are finally left with checking \ref{condition}$_{i}$ when $v=v_{i,j}$ for some $j\in\{1,\dots,n\}$ and $u\in V(\Delta_{i-1})$. For this:
\begin{itemize}
\item Assume first that $u$ is adjacent to $v_{i,j}$. Then, $\theta(u)\subseteq\xtype(Q)$ and $\theta(v_{i,j}) \subseteq\xtype(Q^\perp)$ and is reduced to one vertex. Thus, $\theta(u)\cup\theta(v_{i,j})$ spans a clique.
\item Conversely, assume that $u$ is not adjacent to $v_{i,j}$. 
Then the vertices $x_u$ and $w_j$ are distinct and non-adjacent, so $\theta(u)\cup\theta(v_{i,j})$ does not span a clique.
\end{itemize}

\medbreak
\textbf{Step 3: Injectivity.} The final step of the proof is to check that the map $\theta\colon V(\Delta)\to V(\Gamma\ext_k)$ we have constructed in Step~2 (for $i=\ell$) is injective. If $u$ and $v$ are distinct, then $\st_\Delta (u)\neq\st_\Delta (v)$ because $\Delta$ is clique reduced. Up to swapping $u$ and $v$, there exists a vertex $w$ of $\Delta$ that is
\begin{itemize}
\item adjacent to $u$,
\item distinct from $v$ (and from $u$),
\item not adjacent to $v$.
\end{itemize}
Then, by Condition~\ref{condition}$_{\ell}$, $\theta(u)$ and $\theta(w)$ are adjacent or equal, while $\theta(v)$ and $\theta(w)$ are distinct and non-adjacent. Therefore, $\theta(u)\neq \theta(v)$.
\end{proof}

\section{From graph embeddings to algebraic embeddings of RAAGs}
\label{section:graph_to_algebraic}
The goal of this section is to prove the following theorem.

\begin{thm}
\label{thm:graph_to_algebraic}Let $\Gamma$ be a graph with no induced squares, and let $\Delta\leq \Gamma\ext_k$ be a finite graph whose vertices correspond to pairwise disjoint cliques of $\Gamma\ext$. Then $A(\Delta)\leq A(\Gamma)$.
\end{thm}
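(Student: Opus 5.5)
The plan is to reduce to the one-vertex-clique case, Theorem~\ref{thm:powers_Kim_Koberda}, through an intermediate RAAG, as the introduction announces. Write $V(\Delta)=\{K_1,\dots,K_n\}$, where the $K_i$ are pairwise disjoint cliques of $\Gamma\ext$. For each $i$, choose $g_i\in P_{K_i}$ with full support, for instance the product of the vertices of $K_i$, which commute. We aim to show that for some $M>0$ the assignment $K_i\mapsto g_i^M$ extends to an injective homomorphism $A(\Delta)\to A(\Gamma)$. The homomorphism is well defined: if $K_i,K_j$ are adjacent in $\Delta$, then $K_i\cup K_j$ lies in a clique of $\Gamma\ext$, so all vertices involved pairwise commute and $g_i,g_j$ commute. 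The whole difficulty is injectivity.

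We introduce the finite graph $\widetilde\Delta\leq\Gamma\ext$ induced by $\bigcup_i V(K_i)$. By Theorem~\ref{thm:powers_Kim_Koberda}, there is $M$ such that $x\mapsto x^M$ for $x\in V(\widetilde\Delta)$ extends to an embedding $\iota\colon A(\widetilde\Delta)\hookrightarrow A(\Gamma)$. Since the cliques are disjoint, each vertex $x$ of $\widetilde\Delta$ belongs to a unique $K_i$. The map $K_i\mapsto \prod_{x\in K_i}x$ therefore defines a homomorphism $\psi\colon A(\Delta)\to A(\widetilde\Delta)$, and $\iota\circ\psi$ sends $K_i$ to $g_i^M$ because the factors commute. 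It remains to prove that $\psi$ is injective. This is now a purely combinatorial statement about the finite graph $\widetilde\Delta$: it has no induced squares, since it is an induced subgraph of $\Gamma\ext$ and Corollary~\ref{cor:square_F2xF2} applies. It is partitioned into cliques $K_i$, and $K_i,K_j$ are adjacent in $\Delta$ exactly when $K_i\cup K_j$ spans a clique, because $\Delta\leq\Gamma\ext_k$ is induced.

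To prove injectivity of $\psi$, we proceed by induction on $|V(\widetilde\Delta)|$, using the HNN splitting $A(\widetilde\Delta)=A(\widetilde\Delta\setminus\{x\})\ast_{A(\lk(x))}$ for a chosen vertex $x$. If every clique has one element, then $\psi$ is an isomorphism. Otherwise, we pick $x\in K_j$ with $|K_j|\ge2$. Deleting $x$ leaves a graph of the same type, so by induction $\psi'\colon A(\Delta)\to A(\widetilde\Delta\setminus\{x\})$ is injective. Now $\psi$ is obtained from $\psi'$ by multiplying the image of $K_j$ by the stable-letter-type element $x$. An alternative route is a retraction-type argument: the homomorphism $A(\widetilde\Delta)\to A(\widetilde\Delta\setminus\{x\})$ sending $x$ to $1$ composed with $\psi$ gives $\psi'$. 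Injectivity of $\psi'$ then immediately yields injectivity of $\psi$, since $\psi'=r\circ\psi$ whenever $r$ is a homomorphism.

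We must check that $r$ (killing $x$) is well defined on $A(\widetilde\Delta)$. It is, since killing a generator of a RAAG always yields the RAAG of the induced subgraph. Then $r\circ\psi(K_i)=\psi'(K_i)$ for all $i$, provided $K_j\setminus\{x\}\neq\emptyset$, which holds because $|K_j|\ge2$. The induction should therefore go through easily, reducing to all cliques being singletons. The real content, and the expected obstacle, is then not the combinatorics but the hidden subtlety in the last step. After shrinking $K_j$, the graph $\Delta$ must still embed as an induced subgraph with the modified cliques. Adjacency is preserved because subsets of cliques remain cliques. Non-adjacency may fail, however: $K_i\cup K_j$ might fail to be a clique only because of $x$.

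The heart of the proof is therefore choosing which vertex $x$ to delete, so that non-adjacency is witnessed by other vertices. This is where the square-free hypothesis should enter. Suppose that both $x$ and another vertex $y\in K_j$ are the sole witnesses of non-adjacency to different cliques $K_i,K_l$. The non-adjacent pairs, together with commutation within cliques, would then produce an induced square. If this argument fails, the fallback is the HNN/reduced-form argument the introduction mentions, namely Lemma~\ref{lem:intersection_2_convex} and Lemma~\ref{lem:inductive_step}: adjoin the vertices of $\Delta$ one at a time, and show by a Britton's-lemma analysis that $A(\Delta)$ splits compatibly as an HNN extension along the link.
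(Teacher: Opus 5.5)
Your first reduction is sound, and it is in fact how the paper finishes: by Theorem~\ref{thm:powers_Kim_Koberda} applied to the finite graph $\widetilde\Delta\leq\Gamma\ext$ spanned by the cliques, it suffices to show that $\psi\colon A(\Delta)\to A(\widetilde\Delta)$ is injective. The gap is in your proof of that injectivity.

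\textbf{The retraction argument fails.} The retraction trick itself is valid: if $r\circ\psi=\psi'$ is injective, then so is $\psi$. But it needs a vertex $x$ in some $K_j$ with $|K_j|\ge 2$ whose deletion preserves every non-adjacency of $\Delta$, and your square argument does not guarantee one exists. Suppose $x$ is the only vertex of $K_j$ with a non-neighbour $a\in K_i$, and $y$ is the only one with a non-neighbour $b\in K_l$. Then $a,y,x,b$ is an induced path, and it closes into an induced square only if $a$ and $b$ are adjacent. When they are not, the claim fails outright.

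Concretely, take $\Gamma$ (hence also $\widetilde\Delta$) to be the path with consecutive vertices $a,y,x,b$. It has no induced square. Use the cliques $K_1=\{a\}$, $K_2=\{x,y\}$, $K_3=\{b\}$.
\begin{itemize}
\item No union $K_i\cup K_l$ is a clique, so $\Delta$ has no edges and $A(\Delta)\cong F_3$.
\item The only clique of size at least two is $K_2$.
\item Killing $x$ sends $K_1,K_2$ to the commuting elements $a,y$.
\item Killing $y$ sends $K_2,K_3$ to the commuting elements $x,b$.
\item Killing $a$ or $b$ kills a generator outright.
\end{itemize}
So $r\circ\psi$ is never injective, and no vertex-killing retraction can certify injectivity of $\psi$, even though $\psi$ is injective by the theorem. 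The reduction to the all-singleton case is therefore unavailable in general.

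\textbf{The fallback is only a pointer.} As phrased, it also misses the actual obstacle. The Britton's-lemma step (Lemma~\ref{lem:inductive_step}) can only adjoin a vertex of $\Delta$ whose clique is a single vertex $\{v\}$. This is because its image must be a power of the stable letter of $A(\Gamma)=A(\Gamma\setminus\{v\})\ast_{A(\lk_\Gamma(v))}$, and the image of a clique with several vertices is not of that form.

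The paper handles this by induction on $|V(\Delta)|$ (Lemma~\ref{lem:subgraph_disjoint_injective}):
\begin{itemize}
\item For a vertex $K_0=\{k_1,\dots,k_m\}$, it passes to the intermediate graph obtained by replacing $K_0$ with the singletons $\{k_1\},\dots,\{k_m\}$.
\item It checks that $\Delta$ embeds in the clique graph of this intermediate graph as a separated, square-avoiding subgraph.
\item It factors the induced homomorphism through the corresponding RAAG.
\item It shows each factor is injective by adjoining one-element cliques one at a time.
\end{itemize}

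Moreover, the hypothesis of Lemma~\ref{lem:inductive_step} that $f'(A(\Delta'))\cap A(\lk_\Gamma(v))=f'(A(\Delta'_{\mid\lk_\Gamma(v)}))$ is the genuinely hard input. It comes from the reduction (disc-diagram) argument of Lemma~\ref{lem:intersection_2_convex}, which needs the absence of $\Delta$-spikes. This is where square-freeness is really used (Remark~\ref{rem:proper_implies_no_spike}). Without these ingredients, injectivity of $\psi$ is not established.
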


More precisely, we prove the following rather explicit embedding theorem, which is more general in the sense that it allows $\Gamma$ to contain some induced squares. Each vertex $K$ of $\Gamma\ext_k$ corresponds to a finite clique of $\Gamma\ext$ which in turn is the extended type of a unique abelian parabolic subgroup of $A(\Gamma)$ which we denote $P_K$. In other words, $P_K$ is generated by the set of vertices of $K$ in $\Gamma\ext$, seen as elements of $A(\Gamma)$.

\begin{thm}
\label{thm:graph_to_algebraic_explicit}Let $\Gamma$ be a graph and let $\Delta\leq \Gamma\ext_k$ be a finite graph. Assume the following:
\begin{enumerate}
    \item \label{item:explicit_1}The vertices of $\Delta$ correspond to pairwise disjoint cliques of $\Gamma\ext$.
    \item \label{item:explicit_2}For every induced square $S$ of $\Gamma$, at least one of the following holds:
    \begin{itemize}
    \item some vertex of $S$ is not in any clique corresponding to a vertex of $\Delta$;
    \item no edge of $S$ belongs to a clique corresponding to a vertex of $\Delta$.
    \end{itemize}
\end{enumerate}

Then there exists an integer $M>0$ with the following property: for any family $(g_K)_{K\in V(\Delta)}$ of elements of $A(\Gamma)$ such that $\supp(g_K)=P_K$ for all $K\in V(\Delta)$, the map
%do not delete this line skip

\[\begin{aligned}
V(\Delta)&\to A(\Gamma)\\
K&\mapsto g_K^{M}
\end{aligned}\]
extends to an embedding $A(\Delta)\hookrightarrow A(\Gamma)$.
\end{thm}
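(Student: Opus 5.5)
The plan follows the introduction's sketch: two embeddings composed through an intermediate RAAG $A(\widetilde\Delta)$, reducing to a one-vertex-clique statement in the spirit of Theorem~\ref{thm:powers_Kim_Koberda}.

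\textbf{Intermediate graph and first map.} Each vertex $K\in V(\Delta)$ is a clique of $\Gamma\ext$; write $K=\{x_{K,1},\dots,x_{K,m_K}\}$. Let $\widetilde\Delta$ be the subgraph of $\Gamma\ext$ induced by $\bigcup_{K}V(K)$. It is finite, and by assumption~(\ref{item:explicit_1}) it is a disjoint union of the cliques $K$. Consider the homomorphism $A(\Delta)\to A(\widetilde\Delta)$ sending $K\mapsto \prod_j x_{K,j}$. Each $x_{K,j}$ lies in exactly one $K$, and $\Delta$ is induced in $\Gamma\ext_k$, so $K,L$ are adjacent in $\Delta$ exactly when $K\cup L$ spans a clique, that is, when every $x_{K,j}$ commutes with every $x_{L,l}$.

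Injectivity of this map should follow by an HNN/normal form argument, or by viewing $A(\widetilde\Delta)$ as a graph product over $\Delta$-like data. Concretely, I would apply the inductive scheme described below to single-vertex cliques: add vertices of $\widetilde\Delta$ one at a time and use Britton's lemma in $A(\widetilde\Delta)=A(\widetilde\Delta\setminus\{v\})\ast_{A(\lk(v))}$.

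\textbf{Second map.} Next I want $A(\widetilde\Delta)\to A(\Gamma)$ with $x\mapsto$ suitable powers to be injective. By Theorem~\ref{thm:powers_Kim_Koberda} this holds for $x\mapsto x^M$. The difficulty is that we need the prescribed elements $g_K$ of full support $P_K$, not products of powers of the generators. Since $P_K\cong\mathbb{Z}^{m_K}$ is abelian, $g_K=\prod_j x_{K,j}^{a_{K,j}}$ with all $a_{K,j}\neq0$. So I would prove a strengthened version of the Kim--Koberda statement: there is $M$ such that for all nonzero exponents $a$, the map $x_{K,j}\mapsto x_{K,j}^{Ma_{K,j}}$ embeds $A(\widetilde\Delta)$. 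Composing the two maps then sends $K\mapsto g_K^M$, as required.

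\textbf{Main obstacle.} The hard part is proving that strengthened embedding uniformly in the exponents. This is where assumption~(\ref{item:explicit_2}) on squares enters. The exponent-independent statement can fail when an induced square has edges inside the cliques, because the squares give $F_2\times F_2$ flexibility.

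My approach is induction on $|V(\widetilde\Delta)|$ (equivalently, on the number of vertices of $\Gamma$ used), via the splitting $A(\Gamma)=A(\Gamma\setminus\{v\})\ast_{A(\lk_\Gamma(v))}$ as in Lemma~\ref{lem:inductive_step}. Take a reduced word in the images and show it cannot be trivial. Any cancellation pattern gives a disc-diagram-like configuration, which I would analyse through reduced forms and the intersection lemma of convex subgroups (Lemma~\ref{lem:intersection_2_convex}). Such a configuration forces a letter of some $g_K^M$ to commute past a whole block $g_L^M$ with $K,L$ non-adjacent. That in turn produces two non-adjacent vertices of $K\cup L$ both commuting with a third and fourth, that is, an induced square of $\Gamma\ext$ and hence of $\Gamma$ (Corollary~\ref{cor:square_F2xF2}), containing an edge inside a clique of $\Delta$ with all vertices in cliques. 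This is excluded by~(\ref{item:explicit_2}).

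The choice of $M$ is taken larger than the Kim--Koberda constant, which comes from a ping-pong/translation-length bound on the finitely many conjugators, so that powers exceed any boundary overlap. I expect the bookkeeping in this cancellation analysis to be the main work.
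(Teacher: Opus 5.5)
Your overall architecture matches the paper's. Your $\widetilde\Delta$ is exactly the finite graph $\Gamma'\leq\Gamma\ext$ used there, and the second map comes from Theorem~\ref{thm:powers_Kim_Koberda}. However, you have put the difficulty in the wrong map.

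Uniformity in the exponents for $A(\widetilde\Delta)\to A(\Gamma)$ is free. For nonzero integers $a_x$, the endomorphism $x\mapsto x^{a_x}$ of $A(\widetilde\Delta)$ is injective, because a reduced syllable word stays reduced when its exponents are multiplied by nonzero integers. Composing it with the Kim--Koberda embedding $x\mapsto x^M$ gives $x\mapsto x^{Ma_x}$. This needs no square hypothesis and no enlargement of $M$. The paper is even simpler: it puts the exponents into the first map, $K\mapsto h_K=\prod_j x_{K,j}^{a_{K,j}}\in A(\Gamma')$, and uses $x\mapsto x^M$ unchanged.

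Also, a Britton analysis in $A(\Gamma)$ over $\lk_\Gamma(v)$ does not fit your setting. The vertices of $\widetilde\Delta$ are conjugates of standard generators of $A(\Gamma)$, not standard generators, which is exactly why one moves to $A(\widetilde\Delta)$ first.

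The genuine gap is the first map $A(\Delta)\to A(\widetilde\Delta)$, $K\mapsto\prod_j x_{K,j}$. It carries all the content, and it is where assumption~(2) is indispensable. In the square example of Remark~\ref{rem:necessary_assumptions_algebraic}, $\widetilde\Delta=\Gamma$. Writing $K_a,K_b,K_{cd}$ for the vertices $\{a\},\{b\},\{c,d\}$ of $\Delta$, this map kills the nontrivial element $[K_a,[K_{cd},K_b]]$, since $[a,[cd,b]]=1$.

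Your sketch for this map never invokes~(2), and it does not work as stated.
\begin{itemize}
\item $\widetilde\Delta$ is not a clique expansion of $\Delta$, because non-adjacent cliques may be partially joined. So there is no graph-product shortcut.
\item Adding vertices of $\widetilde\Delta$ one at a time defeats Britton's lemma as soon as a clique has two vertices. If $v$ completes $K=\{v,u,\dots\}$, then $K\mapsto vh$ with $1\neq h\in A(\lk(v))$. Any vertex $L$ of $\Delta$ whose clique lies in $\lk(v)$ but is not joined to $u$ then has image in the edge group, although $L\notin A(\lk_\Delta(K))$.
\item For instance, let $\widetilde\Delta$ have edges $uv$ and $v\ell$, with $K=\{u,v\}$ and $L=\{\ell\}$. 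Then $KLK^{-1}\mapsto v(u\ell u^{-1})v^{-1}$, which pinches to $u\ell u^{-1}$. This lies outside the image $\langle\ell\rangle$ of the previous stage, so the induction stalls.
\end{itemize}

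What is missing is the content of Lemma~\ref{lem:subgraph_disjoint_injective}. There one inducts on $|V(\Delta)|$ and factors through a further intermediate graph in which only one clique $K_0$ is split into singletons. This way every HNN step of Lemma~\ref{lem:inductive_step} adjoins a one-element clique. That step needs the edge-group condition $f'(A(\Delta'))\cap A(\lk(v))=f'(A(\Delta'_{\mid\lk(v)}))$, which comes from the no-spike Lemma~\ref{lem:intersection_2_convex}. Ruling out spikes is precisely where the square hypothesis enters.
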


\begin{rem}
\label{rem:necessary_assumptions_algebraic}
Note that Assumption~(\ref{item:explicit_2}) clearly holds as soon as $\Gamma$ has no induced squares. Therefore, Theorem~\ref{thm:graph_to_algebraic_explicit} implies Theorem~\ref{thm:graph_to_algebraic}.

Assumption~(\ref{item:explicit_1}) cannot be weakened to assume only that there is no inclusion between the cliques. Indeed, let $\Gamma = C_6$ be a cycle of length $6$ with vertices $a$, $b$, $c$, $d$, $e$, $f$ in order, and let $\Delta\leq \Gamma\ext_k$ be the subgraph induced by the six two-vertex cliques that are the edges of $\Gamma\leq \Gamma\ext$. Since $\Gamma$ has no $3$-clique, $\Delta$ is an edgeless graph on six vertices, i.e.~$A(\Delta)\simeq F_6$. However, if we choose $g_K$ to be simply the product of the elements of $K$, for all $M> 0$ we get a relation in $A(\Gamma)$ of the form $(ab)^M (bc)^{-M} (cd)^M (de)^{-M} (ef)^M (fa)^{-M} = 1$, which does not come from relations of the free group $A(\Delta)$ (other, more complicated, such relations can be found for any choice of the $g_K$).

Assumption~(\ref{item:explicit_2}) cannot be dropped either. For example, let $A(\Gamma)=F_2\times F_2$, so that $\Gamma$ is a square with vertices $a$, $b$, $c$, $d$ in order. Let $\Delta\leq \Gamma\ext_k$ be the subgraph induced by the three disjoint cliques $\{a\}$, $\{b\}$, $\{c,d\}$. Among these three cliques, only $\{a\}$ and $\{b\}$ are adjacent in $\Gamma\ext_k$, so $A(\Delta)\simeq\mathbb{Z}^2\ast\mathbb{Z}$. For all $M> 0$ we get a relation in $A(\Gamma)$ of the form $[g_{\{a\}}^M,[g_{\{c,d\}}^M,g_{\{b\}}^M]]=1$, which does not come from relations of the free product $A(\Delta)$.
\end{rem}
\medskip

We now turn to the proof of Theorem~\ref{thm:graph_to_algebraic_explicit}. To simplify the statements of intermediate lemmas, we introduce the following definitions.

\begin{defn}
Let $\Gamma$ be a graph, and let $\Delta\leq \Gamma\ext_k$. A \deffont{$\Delta$-supported family} is a family $(g_K)_{K\in V(\Delta)}$ of elements of $A(\Gamma)$ such that $\supp(g_K)=P_K$ for each $K\in V(\Delta)$.

Every $\Delta$-supported family, seen as a map $V(\Delta)\to A(\Gamma)$ has an \deffont{induced homomorphism} $A(\Delta)\to A(\Gamma)$, sending $K$ to $g_K$: this indeed defines a homomorphism because if $K,L$ are adjacent vertices of $\Delta$, then $P_K, P_L$ are contained in a common abelian parabolic subgroup, hence $g_K$ and $g_L$ commute.

Say $\Delta$ is \deffont{separated} (in $\Gamma\ext_k$) if the vertices of $\Delta$ correspond to pairwise disjoint cliques of $\Gamma\ext$. Say $\Delta$ \deffont{avoids squares} (in $\Gamma\ext_k$) if there is no induced square $S$ of $\Gamma\ext$ such that each of the vertices of $S$ and at least one edge of $S$ belong to (possibly distinct) cliques that correspond to vertices of $\Delta$.
\end{defn}

Note that $\Delta$ always avoids squares if $\Gamma$ has no induced square (in that case, $\Gamma\ext$ has no induced square by Corollary~\ref{cor:square_F2xF2}). Note also that if $\Delta$ is separated and avoids squares in $\Gamma\ext_k$, then the same goes for all induced subgraphs of $\Delta$.

Using this terminology, we can restate Theorem~\ref{thm:graph_to_algebraic_explicit} as follows:
\begin{manualtheorem}{\ref{thm:graph_to_algebraic_explicit}}[Restatement]
Let $\Gamma$ be a graph, and let $\Delta\leq \Gamma\ext_k$ be finite, separated, and avoiding squares. There exists $M>0$ such that for every $\Delta$-supported family $(g_K)_{K\in V(\Delta)}$, the homomorphism induced by the $\Delta$-supported family $(g_K^M)$ is injective.
\end{manualtheorem}

We first prove Theorem~\ref{thm:graph_to_algebraic_explicit} under the weaker assumption $\Delta\leq \Gamma_k$ (Lemma~\ref{lem:subgraph_disjoint_injective}). The proof relies on an inductive argument one of whose major steps is given by Lemma~\ref{lem:inductive_step}. Lemma~\ref{lem:intersection_2_convex} is a technical result used to check the assumptions of Lemma~\ref{lem:inductive_step}. Finally we invoke Theorem~\ref{thm:powers_Kim_Koberda} to pass from $\Delta\leq \Gamma_k$ to $\Delta\leq\Gamma\ext_k$.

\begin{defn}
Let $\Gamma$ be a graph, let $\Delta\leq \Gamma_k$, and let $\Theta\leq \Gamma$. Denote by $\Delta_{\mid \Theta}$ the subgraph of $\Delta$ induced by all cliques $K\in V(\Delta)$ of $\Gamma$ which are entirely contained in $\Theta$.

A \deffont{$\Delta$-spike} of $\Theta$ in $\Gamma$ is a triple of pairwise distinct vertices $v_0, v_1, v_2$ of $\Gamma$ such that
\begin{itemize}
    \item $v_1$ and $v_2$ are non-adjacent vertices of $\Theta$;
    \item $v_0\notin V(\Theta)$, and $v_0$ is adjacent to $v_1$ and $v_2$;
    \item the edge joining $v_0$ and $v_1$ belongs to a clique corresponding to a vertex of $\Delta$;
    \item $v_2$ belongs to a clique corresponding to a vertex of $\Delta$.
\end{itemize}
\end{defn}

The following lemma is the main technical ingredient of the proof of Theorem~\ref{thm:graph_to_algebraic_explicit}.

\begin{lem}
\label{lem:intersection_2_convex}Let $\Gamma$ be a graph and $\Delta\leq \Gamma_k\leq \Gamma\ext_k$ be separated. Let $f\colon A(\Delta)\to A(\Gamma)$ be the homomorphism induced by some $\Delta$-supported family.

Let $\Theta\leq\Gamma$. If $\Theta$ has no $\Delta$-spike in $\Gamma$, then $f(A(\Delta))\cap A(\Theta) = f(A(\Delta_{\mid \Theta}))$.
\end{lem}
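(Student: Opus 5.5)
The inclusion $f(A(\Delta_{\mid\Theta}))\subseteq f(A(\Delta))\cap A(\Theta)$ is immediate. Indeed, if $K\subseteq\Theta$ is a clique of $\Gamma$, then $P_K=A(K)\leq A(\Theta)$, so $g_K\in A(\Theta)$. The content of the lemma is the reverse inclusion. I would prove it by a disc-diagram style argument on words.

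\textbf{Setting up the words.} Take $g\in A(\Delta)$ with $f(g)\in A(\Theta)$. Write $g$ as a word $K_1^{\epsilon_1}\cdots K_m^{\epsilon_m}$ in the generators $V(\Delta)$. For each $K_j$, fix a reduced word $w_{K_j}$ for $g_{K_j}$; it uses exactly the letters of $K_j$, since $\supp(g_K)=P_K=A(K)$. Concatenating gives a word $W$ representing $f(g)$. By Hermiller--Meier there is a reduction of $W$ to a reduced form $w$, and $w$ uses only letters of $\Theta$ because $f(g)\in A(\Theta)$. Any letter of $W$ lying outside $\Theta$ must therefore be cancelled during the reduction against some inverse letter. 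I would argue by induction on $m$. The plan is to show that the word in $V(\Delta)$ can be rewritten, using only the commutation relations of $A(\Delta)$ and free cancellations $KK^{-1}$, until every block lies in $\Delta_{\mid\Theta}$.

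\textbf{Key step: cancellation pairs.} Consider a block $K_j$ with $K_j\not\subseteq\Theta$, and pick a letter $v\in K_j\setminus\Theta$. In the reduction, the occurrence of $v^{\pm1}$ from block $j$ cancels with an occurrence of $v^{\mp1}$ from some block $j'$. Since $\Delta$ is separated, $v$ lies in a unique clique of $V(\Delta)$, so $K_{j'}=K_j$ with the opposite exponent. Every letter strictly between these two occurrences that survives until the cancellation must commute with $v$, i.e.\ be adjacent to $v$ in $\Gamma$ (distinct letters, since the pairing is innermost).

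I would choose an innermost such pair $(j,j')$ and show that every intermediate block $K_l$ with $j<l<j'$ is adjacent to $K_j$ in $\Delta$, or has been cancelled by an inner pair. Then $K_j$ can be commuted past the blocks and cancelled with $K_{j'}^{-1}$ in $A(\Delta)$, which lowers $m$. The difficulty is that intermediate letters may themselves be cancelled within the window rather than commuting with $v$. This is where innermostness and the choice of a letter outside $\Theta$ come in.

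Intermediate blocks contained in $\Theta$ never cancel inside an innermost window of outside letters. So one must show that every letter $y$ of such a block $K_l$ commutes with every letter of $K_j$, not only with $v$. Suppose instead some $u\in K_j$ were not adjacent to $y$. If $u\in\Theta$, then the triple $(v,y,u)$ looks like a spike: $v\notin\Theta$, $v$ is adjacent to $y$ and to $u$, the edge $vu$ lies in the clique $K_j$, $y$ lies in a clique of $\Delta$, and $y,u\in\Theta$ are non-adjacent. This contradicts the no-spike hypothesis. If $u\notin\Theta$, I would apply the same argument with $u$ in place of $v$: that pair must also cancel with $K_{j'}$, forcing $u$ to commute with $y$ as well.

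Once $K_l\cup K_j$ spans a clique of $\Gamma$, the fact that $\Delta\leq\Gamma_k$ is induced gives adjacency in $\Delta$.

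\textbf{Main obstacle.} The delicate part is making the pairing argument rigorous. One must check that the cancellation partner of every letter of $K_j$ lies in the same block $K_{j'}$, rather than the letters of $g_{K_j}$ being split between several copies of $K_j^{-1}$. If this fails, I would instead track letter occurrences as ``hyperplanes'' (dual curves) in a disc diagram and use planarity to choose an innermost pair of blocks. I expect this bookkeeping to be the hardest step.
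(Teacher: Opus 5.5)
Your plan has a genuine gap. It tries to prove a statement stronger than the lemma, and that stronger statement is false. You aim to show that every $g\in A(\Delta)$ with $f(g)\in A(\Theta)$ can be rewritten inside $A(\Delta)$ into a word on $\Delta_{\mid\Theta}$, i.e.\ $f^{-1}(A(\Theta))\subseteq A(\Delta_{\mid\Theta})$. Take $\Theta=\emptyset$, which vacuously has no $\Delta$-spike. Then your claim would make every homomorphism induced by a $\Delta$-supported family injective, for every separated $\Delta\leq\Gamma_k$, with no hypothesis on squares.

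The second example of Remark~\ref{rem:necessary_assumptions_algebraic} refutes this. Let $\Gamma$ be the square $a,b,c,d$, and let $\Delta$ be induced by $x=\{a\}$, $y=\{b\}$, $z=\{c,d\}$, with $g_x=a$, $g_y=b$, $g_z=cd$. Then $[x,[z,y]]\neq 1$ in $A(\Delta)\simeq\mathbb{Z}^2\ast\mathbb{Z}$, but $f([x,[z,y]])=1$. Your key step breaks exactly where you feared. Consider the word $a\cdot cd\cdot b\cdot d^{-1}c^{-1}\cdot b^{-1}\cdot a^{-1}\cdot b\cdot cd\cdot b^{-1}\cdot d^{-1}c^{-1}$, with dots separating blocks. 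The letter $c$ of the first $z$-block is forced to cancel with the first $z^{-1}$-block. This is an innermost pair whose window contains only the block $y$, which is not adjacent to $z$ in $\Delta$ and is not cancelled inside the window. Meanwhile, the letter $d$ of the same $z$-block is forced to cancel with the last $z^{-1}$-block. This is your case $u\notin\Theta$, about which the no-spike hypothesis says nothing. So no refinement of the bookkeeping (dual curves, planarity) can rescue a strategy that rewrites a fixed preimage inside $A(\Delta)$.

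The missing idea is to work with the image element and a minimal word for it, and to argue by contradiction. Take $h\in (f(A(\Delta))\cap A(\Theta))\setminus f(A(\Delta_{\mid\Theta}))$. The paper chooses, among all concatenations $w_1\cdots w_k$ of reduced words for elements $g_K^{\pm1}$ representing $h$, one with the fewest letters, and fixes a reduction of it to a reduced word $w_\Theta$ on $\Theta$. The argument then runs as follows.
\begin{enumerate}
\item Minimality and separatedness give that for each clique $K$ used, some letter of $K$ survives in $w_\Theta$; otherwise one could delete all $K$-blocks.
\item Some used $K$ is not contained in $\Theta$. A letter $v_0\in K\setminus\Theta$ has total exponent $0$. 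Since $v_0$ occurs in $g_K$ with nonzero exponent and lies in no other clique of $\Delta$, the $K$-blocks have net count $0$.
\item Hence the surviving letter $v_1\in K$ lies in $\Theta$ and has total exponent $0$.
\item An extremum of the running exponent of $v_1$ along its surviving occurrences yields a surviving letter $v_2\in\Theta$. This $v_2$ is not adjacent to $v_1$, lies in a block $L\neq K$, and sits at a point where the net count of $K$-blocks is nonzero.
\item So some occurrence of $v_0$ must cancel across $v_2$, forcing $v_0$ and $v_2$ to be adjacent. Thus $(v_0,v_1,v_2)$ is a $\Delta$-spike, of the same shape as your triple.
\end{enumerate}
Nothing is ever rewritten in $A(\Delta)$, which is why possible non-injectivity of $f$ causes no trouble.
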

The assumption that $\Theta$ has no $\Delta$-spike in $\Gamma$ cannot be dropped. Indeed, if $A(\Gamma)=\ZZ\times F_2$, the $F_2$ factor corresponds to a subgraph $\Theta\leq\Gamma$. If we choose for $\Delta$ a vertex $K_1$ and an edge $K_2$ that are disjoint, then $f(A(\Delta_{\mid \Theta}))$ is cyclic, generated by $g_{K_1}$, and the commutator $[g_{K_1}, g_{K_2}]$ is an element of $f(A(\Delta))\cap F_2$ that does not belong to $\langle g_{K_1}\rangle$. In that case, the three vertices of $\Gamma$ form a $\Delta$-spike of $\Theta$ in $\Gamma$. This counterexample is largely related to the second counterexample of Remark~\ref{rem:necessary_assumptions_algebraic}.

\begin{proof}
Let $(g_K)_{K\in V(\Delta)}$ be any $\Delta$-supported family, and let $f\colon A(\Delta)\to A(\Gamma)$ be its induced homomorphism. The inclusion $f(A(\Delta_{\mid \Theta}))\subseteq f(A(\Delta))\cap A(\Theta)$ clearly holds (without any assumption on $\Theta$). Arguing by contrapositive, assume the existence of an element $g\in (f(A(\Delta))\cap A(\Theta))\setminus f(A(\Delta_{\mid \Theta}))$. We shall construct a $\Delta$-spike of $\Theta$ in $\Gamma$. A \deffont{$\Delta$-word} for $g$ is a word in the elements of $V(\Gamma)$ and their inverses that represents $g$ and decomposes as a concatenation $w_1\cdots w_k$ where each $w_i$ is a word representing some $g_K$ or $g_K^{-1}$, $K\in V(\Delta)$. Such a word exists because $g\in G$. Let $w_\Delta = w_1\cdots w_k$ be a $\Delta$-word for $g$ with a minimal number of letters. Note that all the $w_i$ are in reduced form, otherwise reducing them would yield a shorter $\Delta$-word for $g$.
\medskip

Next, consider a reduction of $w_\Delta$ into a reduced form $w_\Theta$ (see Definition~\ref{defn:reduction}). Since $g\in A(\Theta)$, all the letters of $w_\Theta$ are elements of $V(\Theta)$ or their inverses. At each step of the reduction, we keep track of which letters came from which $w_i$ originally. Note that, by definition, all of the elementary moves of the reduction that are exchanges happen between distinct letters (even up to inversion). The rest of the proof is written in terms of this reduction, but can also be understood in terms of disc diagrams in $\cat$ cube complexes. Although we will not explain the latter point of view, the reader familiar with this type of argument can refer to Figure~\ref{fig:disk_diagram} for an illustration of the disc diagram used.
\medskip

\begin{figure}
	\centering
	\includesvg[width=0.7\textwidth]{disk_diagram}
	\caption{The disc diagram that can be used to prove Lemma~\ref{lem:intersection_2_convex}.}
	\label{fig:disk_diagram}
\end{figure}

\emph{Observation:} since the $K\in V(\Delta)$ are pairwise disjoint, for each $K$ such that one of the $w_i$ represents $g_K$ or $g_K^{-1}$, at least one vertex of $K$ appears as a letter of $w_\Theta$. Indeed, otherwise deleting all the $w_i$ representing $g_K$ or $g_K^{-1}$ from $w_\Delta$ would yield a shorter $\Delta$-word for $g$ (the word still represents $g$ because it has the same reduction to $w_\Theta$, ignoring all steps formerly involving a letter of a discarded $w_i$).
\medskip

Now, we shall find three vertices $v_0,v_1,v_2$ of $\Gamma$ satisfying the definition of $\Delta$-spike. Because $g\notin f(A(\Delta_{\mid \Theta}))$, some $w_i$ represents a $g_K$ or $g_K^{-1}$ with $K\nleq \Theta$. Since $\supp(g_K)=P_K$, which is abelian, this means that some letter $v_0\in V(\Gamma)\setminus V(\Theta)$ appears in $w_i$ with non-trivial (possibly negative) total power.
However, since $v_0$ does not appear in $w_\Theta$, this means that the total power of $v_0$ in $w_\Delta$ is $0$. Since $K$ is disjoint from all the other cliques appearing as vertices of $\Delta$, we get that the total power of each element of $K$ in $w_\Delta$ is zero. However, we know that some element $v_1$ of $K$ must still appear as a letter in $w_\Theta$ by the observation above, albeit with total power $0$ since elementary moves do not affect total powers. This means that $v_1\in V(\Theta)$. Note that the edge joining $v_0$ to $v_1$ in $\Gamma$ belongs to the clique $K$ corresponding to a vertex of $\Delta$.
\medskip

Let $E$ be the ordered set of occurrences of $v_1$ or $v_1^{-1}$ in $w_\Delta$ which are not deleted during the reduction. To each element of $E$, we associate an integer, which is the total power of $v_1$ in the prefix of $w_\Delta$ up to and including that occurrence. We obtain a sequence of integers indexed by $E$, starting at $\pm 1$, ending at $0$. Some $v_1$ or $v_1^{-1}$ might occur in $w_\Delta$ between two consecutive elements of $E$, but it can only eventually cancel with an other occurrence lying between the same two elements of $E$, because elementary exchange moves happen only between distinct letters (even up to inversion). Therefore, each subword delimited by consecutive elements of $E$ (not containing the occurrences in $E$ themselves but all the letters in between) must have total power $0$ in $v_1$. This implies that our sequence of integers must have increments of $\pm 1$. This sequence must reach an extremum at some value different from $0$, say a positive maximum (the case of a negative minimum is symmetrical). The element of $E$ where the maximum is reached, and the element of $E$ immediately following it are occurrences of $v_1$ and $v_1^{-1}$ respectively. Let $s$ be the subword of $w_\Delta$ delimited between these two occurrences. As above, $s$ has total power of $v_1$ equal to $0$. Note however that since our integer sequence takes a positive value at the occurrence of $v_1$ reaching the maximum, the largest prefix of $w_\Delta$ before $s$ has positive total power in $v_1$.
\medskip

Finally, $s$ must contain some occurrence of a letter $v_2$ which does not commute with $v_1$ and is not deleted during the reduction. Indeed, otherwise the occurrences of $v_1$ and $v_1^{-1}$ surrounding $s$ would yield occurrences of $v_1$ and $v_1^{-1}$ in $w_\Theta$ only separated by letters commuting with $v_1$, i.e.~cancellable with each other, contradicting the fact that $w_\Theta$ is reduced. In particular, $v_2$ appears in $w_\Theta$, i.e.~$v_2\in V(\Theta)$. This occurrence of $v_2$ does not appear between occurrences of $v_1$ and $v_1^{-1}$ in $s$ which eventually cancel together, because this occurrence of $v_2$ can never be deleted or commuted away from between them. This means that the occurrences of $v_1$ and $v_1^{-1}$ in $s$ before (resp. after) this occurrence of $v_2$ eventually cancel in pairs together, thus the prefix (resp. suffix) of $s$ before (resp. after) this occurrence of $v_2$ has total sum in $v_1$ equal to $0$. Now, this occurrence of $v_2$ belongs to some $w_\ell$ representing a $g_L$ or $g_L^{-1}$ with $L\neq K$, because $K$ and $L$ are cliques and $v_1$ and $v_2$ do not commute. We know that the total power of $v_1$ in $w_1\cdots w_{\ell-1}$ is positive (it is positive before $s$ and zero in $s$ before $w_\ell$). Say without loss of generality that $v_0$ and $v_1$ both appear with positive powers in $g_K$ (the other cases are symmetrical). This means that more of the $w_j$, $1\leq j\leq \ell-1$ represent $g_K$ than $g_K^{-1}$, thus $v_0$ has positive total power in $w_1\cdots w_{\ell-1}$. Since $v_0$ does not appear in $w_\Theta$ nor in $w_\ell$, this means that some occurrence of $v_0$ in $w_1\cdots w_{\ell-1}$ must eventually cancel with some occurrence of $v_0^{-1}$ in $w_{\ell+1}\cdots w_k$. To do so, one of them must commute with the occurrence of $v_2$ in $w_\ell$ during one of the steps of the reduction, or else this occurrence of $v_2$ would separate them throughout the reduction. Therefore $v_0$ and $v_2$ commute. Note also that $v_2$ belongs to the clique $L$ of $\Gamma$, corresponding to a vertex of $\Delta$. Moreover, we have proved that $v_1$ and $v_2$ are distinct, non-adjacent vertices of $\Theta$, but both adjacent to $v_0$, a vertex of $\Gamma\setminus \Theta$, thus they form a $\Delta$-spike.
\end{proof}

The following lemma relies on normal forms for HNN extensions.

\newpage

\begin{lem}
\label{lem:inductive_step}Let $\Gamma$ be a graph, and $\Delta\leq \Gamma_k\leq \Gamma\ext_k$ be separated. Assume the following:
\begin{enumerate}
    \item \label{item:inductive_step_1}Some vertex $K_0$ of $\Delta$ is a one-element clique $\{v\}$.
    \item Letting $\Delta' \coloneqq \Delta\setminus \{K_0\}$, the following hold:
    \begin{enumerate}
    \item\label{item:inductive_step_2}  any homomorphism $f'\colon A(\Delta')\to A(\Gamma)$ induced by some $\Delta'$-supported family satisfies
    \[f'(A(\Delta'))\cap A(\lk_\Gamma(v)) = f'\left(A\left(\Delta'_{\mid \lk_\Gamma(v)}\right)\right);\]
    \item \label{item:inductive_step_3} any homomorphism $A(\Delta')\to A(\Gamma)$ induced by some $\Delta'$-supported family is injective.
\end{enumerate}
\end{enumerate}
Then, any homomorphism $A(\Delta)\to A(\Gamma)$ induced by a $\Delta$-supported family is injective.
\end{lem}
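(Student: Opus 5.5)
We will use the HNN splitting of $A(\Gamma)$ over $v$ and compare it with the HNN splitting of $A(\Delta)$ over $K_0$. Write $\Gamma_0\coloneqq\Gamma\setminus\{v\}$, so that $A(\Gamma)=A(\Gamma_0)\ast_{A(\lk_\Gamma(v))}$, with stable letter $v$ acting trivially on the edge group $A(\lk_\Gamma(v))$. On the side of $\Delta$, write $A(\Delta)=A(\Delta')\ast_{A(\lk_\Delta(K_0))}$, with stable letter $K_0$ acting trivially on $A(\lk_\Delta(K_0))$. Fix a $\Delta$-supported family $(g_K)$ and let $f$ be the induced homomorphism. Since $\supp(g_{K_0})=P_{K_0}=\langle v\rangle$, we have $g_{K_0}=v^m$ with $m\neq 0$. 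Let $f'$ be the restriction of $f$ to $A(\Delta')$, which is induced by the $\Delta'$-supported family $(g_K)_{K\neq K_0}$.

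\textbf{Step 1: locating the images.} Because $\Delta$ is separated, no clique $K\in V(\Delta')$ contains $v$. Hence $f'(A(\Delta'))\subseteq A(\Gamma_0)$, so that $f'$ lands in the vertex group of the splitting. Next we identify the adjacency. A vertex $K\in V(\Delta')$ is adjacent to $K_0=\{v\}$ in $\Gamma_k$ if and only if $K\cup\{v\}$ is a clique, that is, $K\subseteq\lk_\Gamma(v)$. Therefore $\lk_\Delta(K_0)=\Delta'_{\mid\lk_\Gamma(v)}$. Combining this with hypothesis~(\ref{item:inductive_step_2}), we get
\[f'(A(\Delta'))\cap A(\lk_\Gamma(v))=f'\bigl(A(\lk_\Delta(K_0))\bigr).\]

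\textbf{Step 2: Britton's lemma.} Let $x\in\ker f$ be nontrivial. Write $x$ in a reduced HNN form for $A(\Delta)$:
\[x=h_0K_0^{\epsilon_1}h_1\cdots K_0^{\epsilon_r}h_r,\qquad h_j\in A(\Delta').\]
Here reduced means there is no pinch, i.e.\ no subword $K_0^{\epsilon}hK_0^{-\epsilon}$ with $h\in A(\lk_\Delta(K_0))$. We may group consecutive powers of $K_0$ into single exponents $e_j\neq 0$. If $r=0$, then $x=h_0\in A(\Delta')$, and injectivity of $f'$ from hypothesis~(\ref{item:inductive_step_3}) gives a contradiction.

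If $r\geq1$, the image is
\[f(x)=f'(h_0)\,v^{me_1}f'(h_1)\cdots v^{me_r}f'(h_r).\]
Each factor $f'(h_j)$ lies in $A(\Gamma_0)$. By Britton's lemma, $f(x)=1$ forces a pinch: some $f'(h_j)$ with $0<j<r$ lies in the edge group $A(\lk_\Gamma(v))$, between $v^{me_j}$ and $v^{me_{j+1}}$. One subtlety is that in the splitting for $A(\Gamma)$, a pinch occurs for any two consecutive stable letter powers of opposite or same sign once we merge them, so we treat powers $v^{k}$ as products of stable letters. A pinch then needs an interior $f'(h_j)\in A(\lk_\Gamma(v))$. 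By Step~1, $f'(h_j)=f'(h')$ for some $h'\in A(\lk_\Delta(K_0))$. Injectivity of $f'$ then gives $h_j=h'\in A(\lk_\Delta(K_0))$. But then $K_0$ commutes with $h_j$, so the reduced form of $x$ can be shortened by merging $K_0^{e_j}h_jK_0^{e_{j+1}}$, contradicting reducedness. If $r\geq1$ with no interior $h_j$ available, namely $r=1$, then $f(x)$ is conjugate to $v^{me_1}f'(\cdots)$, which has nonzero $v$-exponent sum, so it is nontrivial.

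\textbf{Main obstacle.} The main delicate point is the bookkeeping in Britton's lemma. We must choose a reduced form for $x$ in which each syllable is a maximal power of $K_0$, so that every $h_j$ with $0<j<r$ lies outside $A(\lk_\Delta(K_0))$. We must then check that any pinch in $A(\Gamma)$ arises from a single such $h_j$. Alternatively, and more cleanly, we can note that $v$ has exponent sum $m\sum e_j$ in $f(x)$ and use the normal form theorem directly. The key input making this work is the equality from Step~1, which says that $f'$ is compatible with the edge groups of the two splittings. This equality is exactly what hypothesis~(\ref{item:inductive_step_2}) provides.
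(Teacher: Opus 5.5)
Your proposal is correct and follows essentially the same argument as the paper. You compare the HNN splitting of $A(\Delta)$ over $A(\lk_\Delta(K_0))=A(\Delta'_{\mid\lk_\Gamma(v)})$ with that of $A(\Gamma)$ over $A(\lk_\Gamma(v))$, write $x$ with no interior syllable in $A(\lk_\Delta(K_0))$, and use the intersection hypothesis together with injectivity of $f'$ to exclude any pinch in $f(x)$, so Britton's lemma gives $f(x)\neq 1$. One wording slip: a pinch in $A(\Gamma)$ only occurs between stable letters of opposite signs, never of the same sign, but your conclusion that a pinch forces an interior $f'(h_j)\in A(\lk_\Gamma(v))$ is right.
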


\begin{rem}
\label{rem:proper_implies_no_spike} By Lemma~\ref{lem:intersection_2_convex} applied with $\Gamma\setminus \{v\}$ and $\Delta'$, Assumption~(\ref{item:inductive_step_2}) holds as soon as $\lk_\Gamma(v)$ has no $\Delta'$-spike in $\Gamma\setminus\{v\}$. This is immediately satisfied if $\Delta$ avoids squares (in particular if $\Gamma$ has no induced square). Indeed, arguing by contrapositive, assume we have a $\Delta'$-spike of $\lk_\Gamma(v)$ in $\Gamma\setminus \{v\}$, given by three vertices $v_0, v_1, v_2$. This defines an induced square of $\Gamma$ with vertices $v_1, v_0, v_2, v$ in that order. By definition of a $\Delta'$-spike, and by (\ref{item:inductive_step_1}), the edge joining $v_1$ to $v_0$, the vertex $v_2$ and the vertex $v$ all belong to (distinct) cliques that correspond to vertices of $\Delta$, showing that $\Delta$ does not avoid squares.
\end{rem}
\begin{proof}
Let $(g_K)_{K\in V(\Delta)}$ be a $\Delta$-supported family, with induced homomorphism $f$. Write $g_{K_0} = v^m$. The restriction $f'$ of $f$ to $A(\Delta')\leq A(\Delta)$ is induced by the $\Delta'$-supported family $(g_K)_{K\neq K_0}$. Thus, by Assumption~(\ref{item:inductive_step_3}), $f'$ is injective.

Let $g\in A(\Delta)$ be non-trivial. If $g\in A(\Delta')$, then $f(g)=f'(g)$ is non-trivial. Otherwise, we use the ``reduced'' normal forms for HNN extensions from \cite[Chapter~IV, Section~2, Britton's Lemma]{Lyndon_Schupp}. We have the following splittings of $A(\Gamma)$ and $A(\Delta)$ as HNN extensions over the links of $v$ and $K_0$ respectively, where in each case the two embeddings of the edge group in the vertex group are the same standard parabolic embedding:
\[\begin{aligned}A(\Gamma) &\simeq A(\Gamma\setminus \{v\}) \ast_{A(\lk_\Gamma(v))}\text{, with stable letter }v,\\
A(\Delta) &\simeq A\left(\Delta'\right) \ast_{A(\lk_\Delta(K_0))} = A\left(\Delta_{\mid \Gamma\setminus \{v\}}\right) \ast_{A\left(\Delta_{\mid K_0^\perp}\right)}\text{, with stable letter }K_0.
\end{aligned}\]
We write
\[g = g_0 K_0^{\alpha_1} g_1\dots K_0^{\alpha_k} g_k,\]
with $k\geq 1$, the $\alpha_i$ in $\ZZ\setminus \{0\}$ and the $g_i$ in $A(\Delta')$, such that none of the $g_i$ are in $A\left(\Delta_{\mid K_0^\perp}\right)$, except possibly $g_0$ or $g_k$. Thus, since $f(K_0) = v^m$, we have:
\[f(g) = f'(g_0) v^{m\alpha_1} f'(g_1) \dots v^{m\alpha_k}f'(g_k). \tag{$\bigstar$}\label{HNN_normal_form}\]

Note that $k\geq 1$ because $g\notin A(\Delta')$, the $m\alpha_i$ are non-zero, and the $f'(g_i)$ are in $A(\Gamma\setminus\{v\})$. To obtain injectivity of $f$, we wish to prove that (\ref{HNN_normal_form}) is a normal form for $f(g)$. 

To do so, assume for the sake of contradiction that there exists $i_0\in\{1,\dots,k-1\}$ such that $f'(g_{i_0})$ belongs to the edge group $A(\lk_\Gamma(v))$. By Assumption~(\ref{item:inductive_step_2}), we have:
\[f'(A(\Delta'))\cap A(\lk_\Gamma(v)) = f'\left(A\left(\Delta'_{\mid \lk_\Gamma(v)}\right)\right)= f'\left(A\left(\Delta_{\mid \lk_\Gamma(v)}\right)\right).\]

Since $g_{i_0}\in A(\Delta')$, we have  $f'(g_{i_0})\in f'(A(\Delta'))$. Thus, the equality above yields $f'(g_{i_0}) \in  f'\left(A\left(\Delta_{\mid \lk_\Gamma(v)}\right)\right)$. By injectivity of $f'$, we have $g_{i_0}\in A\left(\Delta_{\mid \lk_\Gamma(v)}\right)$. Yet, since $K_0 = \{v\}$, $K_0^\perp = \lk_\Gamma(v)$. Thus  $g_{i_0}\in A\left(\Delta_{\mid K_0^\perp}\right)$, contradicting that we started with a normal form for $g$.

Therefore, none of the $f'(g_i)$, $0<i<k$ belong to $A(\lk_\Gamma(v))$, meaning that (\ref{HNN_normal_form}) is a non-trivial normal form for $f(g)$ in the HNN splitting of $A(\Gamma)$. Hence, $f(g)$ is non-trivial.
\end{proof}

Now we can establish a version of Theorem~\ref{thm:graph_to_algebraic_explicit} which does not require to pass to a power, in the special case where $\Delta\leq \Gamma_k$.

\begin{lem}
\label{lem:subgraph_disjoint_injective}Let $\Gamma$ be a graph and let $\Delta\leq \Gamma_k\leq \Gamma\ext_k$ be finite, separated, and avoiding squares. Then, any $\Delta$-supported family induces an injective homomorphism $A(\Delta)\hookrightarrow A(\Gamma)$.
\end{lem}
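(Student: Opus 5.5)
We argue by induction on the total number of vertices of $\Gamma$ lying in cliques corresponding to vertices of $\Delta$, i.e.\ on $N(\Delta):=\sum_{K\in V(\Delta)}|K|$. Since $\Delta\leq\Gamma_k$ is finite, we may first replace $\Gamma$ by the finite induced subgraph spanned by $\bigcup_{K}K$. This is harmless: any $\Delta$-supported family lives in $A$ of that subgraph, whose RAAG embeds in $A(\Gamma)$, and being separated and avoiding squares are inherited. If $\Delta$ is empty there is nothing to prove.

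\emph{Case 1: some vertex $K_0$ of $\Delta$ is a one-element clique $\{v\}$.} We apply Lemma~\ref{lem:inductive_step} with $\Delta'=\Delta\setminus\{K_0\}$. Assumption~(\ref{item:inductive_step_3}) follows from the induction hypothesis, since $\Delta'$ is still finite, separated and avoids squares, and $N(\Delta')<N(\Delta)$. Assumption~(\ref{item:inductive_step_2}) follows from Remark~\ref{rem:proper_implies_no_spike} together with Lemma~\ref{lem:intersection_2_convex}. One point needs care: Lemma~\ref{lem:intersection_2_convex} is applied inside $\Gamma\setminus\{v\}$. Because $\Delta$ is separated, $v$ lies in no clique of $\Delta'$, so $\Delta'\leq(\Gamma\setminus\{v\})_k$ and every $\Delta'$-supported family takes values in $A(\Gamma\setminus\{v\})$. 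The intersection with $A(\lk_\Gamma(v))$ computed in $A(\Gamma\setminus\{v\})$ is therefore the one required.

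\emph{Case 2: every vertex of $\Delta$ is a clique with at least two elements.} Here the HNN splitting cannot be used directly, so we introduce the intermediate graph $\widetilde\Delta$ announced in the introduction. Pick a vertex $K_0=\{a_1,\dots,a_r\}$ of $\Delta$ with $r\geq2$, and set $\widetilde\Delta:=\Delta\setminus\{K_0\}$ together with $r$ new vertices $\{a_1\},\dots,\{a_r\}$ of $\Gamma_k$. Then $\widetilde\Delta\leq\Gamma_k$, it is separated because $K_0$ was disjoint from the other cliques, and it avoids squares because the set of vertices covered is unchanged and no edges of cliques are added. Moreover $N(\widetilde\Delta)=N(\Delta)$, but $\widetilde\Delta$ now has a one-element clique.

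So we should not induct on $N$ alone. Instead we induct on the pair $\bigl(N(\Delta),\#\{K\in V(\Delta): |K|\geq2\}\bigr)$ in lexicographic order, and in Case~1 we remove singletons. With this order, Case~1 applies to $\widetilde\Delta$, and after repeated splitting every $\Delta$ is reduced to graphs with only singleton cliques, which Case~1 handles. Given a $\Delta$-supported family $(g_K)$, the element $g_{K_0}\in P_{K_0}$ has full support, so $g_{K_0}=\prod_j a_j^{m_j}$ with all $m_j\neq0$. The family with $g_{\{a_j\}}=a_j^{m_j}$, together with the remaining $g_K$, is $\widetilde\Delta$-supported, so it induces an injective homomorphism $A(\widetilde\Delta)\to A(\Gamma)$. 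Our homomorphism factors as $A(\Delta)\to A(\widetilde\Delta)\to A(\Gamma)$, where the first map sends $K_0\mapsto\prod_j\{a_j\}^{m_j}$ and fixes the other vertices. It then suffices to show that this first map is injective.

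To prove injectivity of $A(\Delta)\to A(\widetilde\Delta)$, we view it as a $\Delta$-supported family in the RAAG $A(\widetilde\Delta)$ over the graph $\widetilde\Delta$. The cliques are $\{K\}$ for $K\neq K_0$ and $\{\{a_1\},\dots,\{a_r\}\}$, which is a clique of $\widetilde\Delta$; these are pairwise disjoint. Adjacency is preserved, since $K$ is adjacent to $K_0$ in $\Gamma_k$ exactly when $K$ is adjacent to every $\{a_j\}$. The cliques avoid squares, because an induced square of $\widetilde\Delta$ using an edge $\{a_i\}\{a_j\}$ would give vertices $x,y$ adjacent to both $a_i,a_j$ with $x,y$ nonadjacent and both adjacent to $K_0$, hence to all $a_j$. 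Such a configuration would yield an induced square in $\Gamma$ with an edge inside $K_0$, contradicting that $\Delta$ avoids squares. Now the number of non-singleton cliques is one, and the non-singleton clique has at most as many vertices as before, while $N$ is bounded by $|V(\widetilde\Delta)|$. Making this fit the induction measure is the main obstacle. I would handle it by running a nested induction on $|V(\Delta)|$: this second application lives in a graph $\widetilde\Delta$ with one fewer non-singleton clique covering the same number of vertices of its ambient graph. Alternatively, one can observe directly that every other clique is a singleton, so Case~1 can peel them off one by one, reducing to $\Delta=\{K_0\}$, where injectivity of $\mathbb{Z}\to A(\widetilde\Delta)$ is clear by torsion-freeness.
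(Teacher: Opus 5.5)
Your strategy is the paper's. You use the same intermediate graph $\widetilde\Delta$ obtained by splitting $K_0$ into singletons, the same factorisation $A(\Delta)\to A(\widetilde\Delta)\to A(\Gamma)$, and the same engine: Lemma~\ref{lem:inductive_step} fed by Remark~\ref{rem:proper_implies_no_spike}. The paper inducts on $|V(\Delta)|$ and then adjoins $\{k_1\},\dots,\{k_m\}$ one at a time; your lexicographic induction instead lets the induction hypothesis cover $\widetilde\Delta$ itself, and that is a legitimate reorganisation. But two steps are wrong as written.

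\emph{The factorisation does not compose to $f$.} With $g_{\{a_j\}}=a_j^{m_j}$ and $K_0\mapsto\prod_j\{a_j\}^{m_j}$, the composite sends $K_0$ to $\prod_j a_j^{m_j^2}$, which is not $g_{K_0}$ in general. Put the exponents in only one of the two maps, e.g.\ take $g_{\{a_j\}}=a_j$ as in the paper.

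\emph{The square-avoidance argument is more seriously wrong.} You argue that $\Delta$ avoids squares in $\widetilde\Delta_k$ via a configuration that cannot occur. Since $\{a_i\}$ and $\{a_j\}$ are adjacent, two non-adjacent vertices each adjacent to both of them span a diamond, not an induced square. In an actual induced square $\{a_i\},\{a_j\},M,L$ (in this cyclic order), $L$ is adjacent to $\{a_i\}$ but not to $\{a_j\}$. So $L$ is \emph{not} adjacent to $K_0$ in $\Delta$, contrary to what you write. Also, $L$ and $M$ cannot be singletons $\{a_l\}$. They are therefore cliques of $\Gamma$ from $V(\Delta)\setminus\{K_0\}$, possibly with several vertices.

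The missing ingredient is the refinement in Corollary~\ref{cor:square_F2xF2}. Choose $x\in L$ and $y\in M$ such that $a_i,a_j,y,x$ induce a square of $\Gamma$. Its edge $a_ia_j$ lies in $K_0$, and all four of its vertices lie in cliques of $\Delta$, contradicting that $\Delta$ avoids squares. This step is load-bearing: it is exactly what Remark~\ref{rem:proper_implies_no_spike} needs when you peel singletons off inside $A(\widetilde\Delta)$.

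Finally, the ``main obstacle'' you raise is not one. In Case~2 every clique has at least two vertices. So for $|V(\Delta)|\geq 2$, the quantity $N$ for $\Delta$ computed inside $\widetilde\Delta$ is $|V(\Delta)|-1+r<\sum_{K}|K|$, and the induction hypothesis applies directly, provided your induction ranges over all ambient graphs. For $|V(\Delta)|=1$, torsion-freeness suffices.

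Your second alternative peels the singleton vertices off down to $\{K_0\}$ by iterating Lemma~\ref{lem:inductive_step}. That is precisely the paper's Step~4, and it needs no induction hypothesis at all. The ``nested induction'' sentence, which also miscounts the non-singleton cliques, can be dropped.
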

\begin{proof}
We fix $\Gamma$ and proceed by induction on the number of vertices of $\Delta$. The result clearly holds when $\Delta$ has at most one vertex, since $A(\Gamma)$ is torsion free.

Let $n>1$, assume the result holds for all graphs with at most $n-1$ vertices, and let $\Delta$ have $n$ vertices. Let $K_0$ be a vertex of $\Delta$. Viewing $K_0$ as a clique of $\Gamma$, we let $k_1,\dots, k_m\in V(\Gamma)$ be the vertices of $K_0$ ($m\geq 1$). Let $\widetilde \Delta$ be the subgraph of $\Gamma_k$ induced by $(V(\Delta)\setminus \{K_0\}) \cup \{\{k_1\},\dots, \{k_m\}\}$. We shall factor each induced homomorphism $A(\Delta)\to A(\Gamma)$ into a composition of induced homomorphisms $A(\Delta)\to A(\widetilde \Delta)\to A(\Gamma)$.

\textbf{Step 1: $\widetilde\Delta$ is separated and avoids squares in $\Gamma_k$.} The graph $\widetilde \Delta$ is separated in $\Gamma_k$ by construction. Further, if all the vertices, and an edge of some induced square of $\Gamma$ belonged to (possibly distinct) cliques corresponding to vertices of $\widetilde \Delta$, then the same would clearly be true of $\Delta$ with the same square. Therefore, $\widetilde \Delta$ avoids squares in $\Gamma_k$.

\textbf{Step 2: $\Delta$ is separated and avoids squares in $\widetilde \Delta_k$.} Define a map $\varphi \colon V(\Delta)\to V(\widetilde \Delta_k)$ as follows: for a vertex $K\neq K_0$ of $\Delta$, set $\varphi(K) \coloneqq \{K\}$, and set $\varphi(K_0)$ to be the $m$-element clique $\mathcal{K}$ of $\widetilde \Delta$ with vertex set $\{\{k_1\},\dots, \{k_m\}\}$. Clearly, these cliques are pairwise disjoint. Moreover, let $K, L\in V(\Delta)$ be different from $K_0$. Then, $\{K\}$ and $\{L\}$ are adjacent in $\widetilde \Delta_k$ if and only if $K$ and $L$ are adjacent in $\widetilde \Delta$, if and only if $K$ and $L$ are adjacent in $\Delta$. Likewise, $\{K\}$ and $\mathcal{K}$ are adjacent in $\widetilde \Delta_k$ if and only if $K$ is adjacent to each $\{k_i\}$ in $\widetilde \Delta$, if and only if $K$ is adjacent to $K_0$ in $\Delta$. Therefore, $\varphi$ extends to an embedding of $\Delta$ in $\widetilde \Delta_k$ as a separated induced subgraph.

To prove that $\varphi(\Delta)$ avoids squares in $\widetilde \Delta_k$, assume for the sake of contradiction that $\widetilde \Delta$ has vertices $v_0$, $v_1$, $v_2$, $v_3$ spanning an induced square in that order, such that the edge $e$ joining $v_0$ to $v_1$, the vertex $v_2$, and the vertex $v_3$ all belong to (possibly distinct) cliques corresponding to vertices of $\varphi(\Delta)$. Since $\widetilde \Delta\leq \Gamma_k$, by Corollary~\ref{cor:square_F2xF2}, we can choose a vertex $x_i$ of $\Gamma$ in each clique $v_i$ so that they span an induced square of $\Gamma$ in the same order. Now by assumption, each $v_i$, as well as $e$, is contained in some $\varphi(K)$ with $K\in V(\Delta)$. This means that each $v_i$ is either some $K\in V(\Delta)\setminus \{K_0\}$ or some vertex $\{k_j\}$ of $\mathcal{K}$, and the edge $e$ is an edge of $\mathcal{K}$ (since $\mathcal{K}$ is the only clique in $\varphi(V(\Delta))$ with possibly more than one element). Thus, $x_0$ and $x_1$ are of the form $k_j$ and $k_{j'}$ for $1\leq j<j'\leq m$: the edge joining $x_0$ to $x_1$ is contained in the clique $K_0$ of $\Gamma$. Likewise, $x_2$ and $x_3$ are also contained in cliques of $\Gamma$ which come from vertices of $\Delta$. Hence, the square induced by $x_0$, $x_1$, $x_2$, $x_3$ contradicts the assumption that $\Delta$ avoids squares in $\Gamma_k$.

\textbf{Step 3: Factorisation.} Now let $(g_K)_{K \in V(\Delta)}$ be a $\Delta$-supported family of elements of $A(\Gamma)$ with induced homomorphism $f\colon A(\Delta)\to A(\Gamma)$. Write $g_{K_0} = k_1^{\alpha_1}\cdots k_m^{\alpha_m}$, where the $\alpha_i$ are non-zero. Then, setting $g^{(1)}_K=K$ for $K\in V(\Delta)\setminus \{K_0\}$ and $g^{(1)}_{K_0} = \{k_1\}^{\alpha_1}\cdots \{k_m\}^{\alpha_m}$ defines a $\Delta$-supported family of elements of $A(\widetilde \Delta)$ (compatible with the embedding $\varphi$ defined above). Let $f^{(1)}\colon A(\Delta)\to A(\widetilde \Delta)$ denote the induced homomorphism.

Besides, setting $g^{(2)}_K = g_K$ for $K\in V(\Delta)\setminus \{K_0\}$ and $g^{(2)}_{\{k_i\}} = k_i$ for $1\leq i\leq m$ defines a $\widetilde \Delta$-supported family of elements of $A(\Gamma)$. Let $f^{(2)}\colon A(\widetilde \Delta)\to A(\Gamma)$ denote the induced homomorphism. It is clear on generators that $f = f^{(2)}\circ f^{(1)}$. It remains to see that $f^{(1)}$ and $f^{(2)}$ are both injective.

\textbf{Step 4: Injectivity of factors.} Let $K_0, v_1,\dots, v_{n-1}$ denote the vertices of $\Delta$. All the $v_i$ are mapped by $\varphi$ to one-element cliques of $\widetilde \Delta_k$. For $0\leq i\leq n-1$, let $\Delta_i$ denote the subgraph of $\Delta$ induced by $\{K_0\}\cup \{v_1,\dots, v_i\}$. Each $\Delta_i$, as an induced subgraph of $\Delta$, is separated and avoids squares in $\widetilde\Delta_k$. We prove iteratively that the restriction of $f^{(1)}$ to $\Delta_i$ is injective. This is true for $\Delta_0$ which is the single vertex $K_0$, since $A(\widetilde\Delta)$ is torsion free. If it is true for $i-1$, it is true for $i$ by Lemma~\ref{lem:inductive_step} applied to $\Delta_i$ and the one-element clique given by $v_i$: Assumption~(\ref{item:inductive_step_2}) is satisfied by Remark~\ref{rem:proper_implies_no_spike} since $\Delta_i$ avoids squares. As $\Delta_{n-1} = \Delta$, the homomorphism $f^{(1)}$ is injective.

The argument for $f^{(2)}$ is similar: for $0\leq i\leq m$, let $\widetilde\Delta_i$ be the subgraph of $\widetilde\Delta$ induced by $(\Delta\setminus\{K_0\}) \cup \{\{k_1\},\dots,\{k_i\}\}$, which is separated and avoids squares in $\Gamma_k$, being an induced subgraph of $\widetilde\Delta$. By the inductive hypothesis applied to the graph with $n-1$ vertices $\widetilde\Delta_0 = \Delta\setminus\{K_0\}$, the restriction of $f^{(2)}$ to $\widetilde\Delta_0$ is injective. Once again if the restriction of $f^{(2)}$ to $\widetilde\Delta_{i-1}$ is injective, the same holds for $\widetilde\Delta_i$ by Lemma~\ref{lem:inductive_step} applied to $\widetilde\Delta_i$ and the one-element clique $\{k_i\}$: Assumption~(\ref{item:inductive_step_2}) is satisfied by Remark~\ref{rem:proper_implies_no_spike} since $\widetilde\Delta_i$ avoids squares. As $\widetilde\Delta_m = \widetilde\Delta$, the homomorphism $f^{(2)}$ is injective.
\end{proof}

We can now finish the proof of the theorem.

\begin{proof}[Proof of Theorem~\ref{thm:graph_to_algebraic_explicit}]
Let $\Gamma$ be a graph and $\Delta\leq \Gamma\ext_k$ be finite, separated, and avoiding squares. Let $(g_K)_{K\in V(\Delta)}$ be a $\Delta$-supported family. 

Let $\Gamma'$ be the finite subgraph of $\Gamma\ext$ induced by the union of all the cliques appearing as vertices of $\Delta$. By Theorem~\ref{thm:powers_Kim_Koberda} applied to $\Gamma'$ and $\Gamma$, there exists $M>0$ such that the map
\[\begin{aligned}
V(\Gamma')&\to A(\Gamma)\\
g&\mapsto g^M
\end{aligned}\]
extends to an embedding $f\colon A(\Gamma')\hookrightarrow A(\Gamma)$. Clearly, $\Delta\leq \Gamma'_k$, and $\Delta$ is separated and avoids squares in $\Gamma'_k$. For $K\in V(\Delta)$, let $P'_K$ denote the abelian parabolic subgroup of $A(\Gamma')$ generated by the vertices of $K$, seen as a clique of $\Gamma'$. The element $g_K\in A(\Gamma)$ decomposes as a product $v_1^{\alpha_1}\cdots v_n^{\alpha_n}$ where the $v_i$ are vertices of $K$, seen in $\Gamma\ext$, and the $\alpha_i$ are non-zero. Note that all vertices of $K$ appear since $\supp(g_K) = P_K$. To avoid confusion, let $h_K\in A(\Gamma')$ denote also the product $v_1^{\alpha_1}\cdots v_n^{\alpha_n}$, but seen as a product of powers of standard generators of $A(\Gamma')$. Note that since all vertices of $K$ appear, $\supp(h_K) = P'_K$.
Thus, $(h_K)_{K\in V(\Delta)}$ is a $\Delta$-supported family of elements of $A(\Gamma')$.

By Lemma~\ref{lem:subgraph_disjoint_injective}, applied to $\Delta$, $\Gamma'$, and $(h_K)$, the map \[\begin{aligned}
V(\Delta)&\to A(\Gamma')\\
K&\mapsto h_K
\end{aligned}\]
extends to an embedding $A(\Delta)\hookrightarrow A(\Gamma')$. Composing this map with $f$, we get an embedding $A(\Delta)\hookrightarrow A(\Gamma)$ mapping $K$ to $f(h_K)$. Finally, by definition of $f$, we have $f(h_K) = v_1^{M\alpha_1}\cdots v_n^{M\alpha_n} = (v_1^{\alpha_1}\cdots v_n^{\alpha_n})^M = g_K^M$, since $K$ is a clique.
\end{proof}

\section{Conclusion}\label{sec:conclusion}

In this final section, we complete the proof of our main theorem, which we now restate.

\begin{manualtheorem}{\ref{thm:main}}
Let $\Delta,\Gamma$ be finite graphs, and assume that $\Gamma$ has no induced squares. Then the following are equivalent:
\begin{enumerate}
    \item\label{dup_main-1} $A(\Delta)$ measurably embeds into $A(\Gamma)$;
    \item\label{dup_main-1,5} there exist free measure-preserving actions of $A(\Delta)$ and $A(\Gamma)$ on a standard probability space $X$ such that $A(\Delta)\cdot x\subseteq A(\Gamma)\cdot x$ for almost every $x\in X$; 
    \item\label{dup_main-3} $A(\Delta)$ embeds as a subgroup in a graph product of free abelian groups over $\Gamma$;
    \item\label{dup_main-2} $A(\Delta)$ embeds as a subgroup in the graph product of $\mathbb{Z}^{|V(\Delta)|}$ over $\Gamma$;
    \item\label{dup_main-4} $\clr(\Delta)$ embeds as an induced subgraph in $\Gamma\ext_k$.
\end{enumerate}
Additionally, there exists an algorithm which, given two finite graphs $\Delta,\Gamma$ as above, decides whether or not any of the above equivalent conditions holds. 
\end{manualtheorem}

\begin{proof}
The fact that Condition~(\ref{dup_main-4}) is algorithmic follows from Corollary~\ref{cor:algorithmic_cliques} and from the fact that the clique reduction of a finite graph can be computed algorithmically. For the equivalence, we prove (\ref{dup_main-4})$\Rightarrow$(\ref{dup_main-2})$\Rightarrow$(\ref{dup_main-3})$\Rightarrow$(\ref{dup_main-1,5})$\Rightarrow$(\ref{dup_main-1})$\Rightarrow$(\ref{dup_main-4}).

We start with (\ref{dup_main-4})$\Rightarrow$(\ref{dup_main-2}). Under the assumption that $\clr(\Delta)\leq\Gamma\ext_k$, by Lemma~\ref{lem:disjoint_cliques}, there exists a clique expansion $\widetilde\Gamma$ of $\Gamma$ of order $|V(\Delta)|$ such that $\Delta\leq \widetilde\Gamma\ext_k$ and such that the vertices of $\Delta$ correspond to pairwise disjoint cliques of $\widetilde\Gamma\ext$. Since a square is clique reduced, and $\Gamma$ has no induced squares, by Lemma~\ref{lem:square_expansion}, $\widetilde\Gamma$ has no induced squares. By Theorem~\ref{thm:graph_to_algebraic}, we have $A(\Delta)\leq A(\widetilde\Gamma)$ and (\ref{dup_main-2}) holds.

The implication (\ref{dup_main-2})$\Rightarrow$(\ref{dup_main-3}) is clear. 

For (\ref{dup_main-3})$\Rightarrow$(\ref{dup_main-1,5}), notice that if $A(\Delta)$ embeds in such a graph product, we can always assume that its vertex groups are finitely generated and non-trivial, i.e.\ of the form $\mathbb{Z}^n$ with $n\geq 1$. The implication (\ref{dup_main-3})$\Rightarrow$(\ref{dup_main-1,5}) thus follows from the fact that every graph product of non-trivial finitely generated free abelian groups is orbit equivalent to $A(\Gamma)$ (Theorem~\ref{thm:amenable_graph_product}).

The implication (\ref{dup_main-1,5})$\Rightarrow$(\ref{dup_main-1}) is a general fact about measurable embeddings, see Remark~\ref{rem:oe-me}.

Finally, we prove (\ref{dup_main-1})$\Rightarrow$(\ref{dup_main-4}). Since $A(\clr(\Delta))$ is orbit equivalent to $A(\Delta)$ (Corollary~\ref{cor:ME_clique_reduction}), $A(\clr(\Delta))$ measurably embeds in $A(\Delta)$ (see Remark~\ref{rem:oe-me}). Thus (\ref{dup_main-1}) combined with the transitivity of measurable embeddings (Remark~\ref{transitivité du plongement ME}) yields that $A(\clr(\Delta))$ measurably embeds into $A(\Gamma)$. Theorem~\ref{th meas embedding to graph} concludes that $\clr(\Delta)\leq\Gamma\ext_k$, i.e.\ (\ref{dup_main-4}) holds.
\end{proof}
\medskip

\begin{rem}
\label{rem:finite_unnecessary}
To prove the equivalence, the assumption that $\Gamma$ is finite is only used for (\ref{dup_main-1})$\Rightarrow$(\ref{dup_main-4}). The authors do not know whether it can be dropped. The implications (\ref{dup_main-3})$\Rightarrow$(\ref{dup_main-1,5})$\Rightarrow$(\ref{dup_main-1}) hold when $\Gamma$ is countable, and the implications (\ref{dup_main-4})$\Rightarrow$(\ref{dup_main-2})$\Rightarrow$(\ref{dup_main-3}) hold without any cardinality restriction on $\Gamma$.

Further, (\ref{dup_main-3})$\Rightarrow$(\ref{dup_main-4}) also holds without any cardinality restriction on $\Gamma$. Indeed, if $A(\Delta)$ embeds as a subgroup in a graph product of free abelian groups of $\Gamma$, then we can assume that these free abelian groups are countable and non-trivial, as in the proof of Theorem~\ref{thm:main}. In that case, there exists a clique expansion $\widetilde\Gamma$ of $\Gamma$ of countable order such that $A(\Delta)$ embeds as a subgroup of $A(\widetilde\Gamma)$. By Theorem~\ref{thm:main_Kim_Koberda}, we have $\clr(\Delta)\leq \Delta\leq \widetilde\Gamma\ext_k$. By Lemma~\ref{lem:clique_graph_expansion}, the graph $\widetilde\Gamma\ext_k$ is a clique expansion of $\Gamma\ext_k$. Therefore, since $\clr(\Delta)$ is clique reduced, $\clr(\Delta)\leq \Gamma\ext_k$ by Lemma~\ref{lem:square_expansion}.
\end{rem}

\medskip
\begin{rem}\label{rem:square}
In Theorem~\ref{thm:main}, we do not make the assumption that $\Delta$ has no induced squares. However, it follows from the theorem that if $A(\Delta)$ measurably embeds into $A(\Gamma)$ and $\Gamma$ has no induced squares, then the same holds for $\Delta$. Indeed, in that case, by Corollary~\ref{cor:square_F2xF2}, $\Gamma\ext_k$ has no induced squares, hence $\clr(\Delta)$ has no induced squares. Since a square is clique reduced, by Lemma~\ref{lem:square_expansion}, $\Delta$ has no induced squares.

As a consequence, if $\Gamma$ is a finite graph and $F_2\times F_2$ measurably embeds into $A(\Gamma)$, then $F_2\times F_2$ embeds as a subgroup of $A(\Gamma)$.
\end{rem}

In the same spirit as the remark above, we deduce the following corollary. A group is \deffont{coherent} if all of its finitely generated subgroups are finitely presented.
\begin{cor}
\label{cor:coherent}Let $\Gamma, \Delta$ be finite graphs. Assume that $A(\Delta)$ mesurably embeds into $A(\Gamma)$, and $A(\Gamma)$ is coherent. Then $A(\Delta)$ is coherent.
\end{cor}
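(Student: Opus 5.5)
The plan is to reduce to the algebraic setting via Theorem~\ref{thm:main} and then use Droms' characterisation of coherent right-angled Artin groups: $A(\Gamma)$ is coherent if and only if $\Gamma$ is chordal, i.e.\ has no induced cycle of length at least $4$. This is a classical result of Droms that I will quote; it is not stated earlier in the paper.

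First, since $A(\Gamma)$ is coherent, $\Gamma$ is chordal by Droms. In particular $\Gamma$ has no induced square, so Theorem~\ref{thm:main} applies to the pair $(\Delta,\Gamma)$. The implication (\ref{main-1})$\Rightarrow$(\ref{main-2}) shows that $A(\Delta)$ embeds as a subgroup of the graph product of $\mathbb{Z}^{|V(\Delta)|}$ over $\Gamma$. By Remark~\ref{rem:graph_product_expansion}, this graph product is $A(\widetilde\Gamma)$ for some clique expansion $\widetilde\Gamma$ of $\Gamma$. (If $\Delta$ is empty there is nothing to prove.)

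Next, I show that $\widetilde\Gamma$ is still chordal. For $n\geq 4$, the cycle $C_n$ is clique reduced: two distinct vertices of $C_n$ have distinct stars. So if $C_n\leq\widetilde\Gamma$, Lemma~\ref{lem:square_expansion} gives $C_n\leq\Gamma$, contradicting chordality of $\Gamma$. Hence $\widetilde\Gamma$ has no induced cycle of length at least $4$, and $A(\widetilde\Gamma)$ is coherent by Droms' theorem.

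Finally, coherence passes to subgroups. Every finitely generated subgroup of $A(\Delta)$ is, through the embedding, a finitely generated subgroup of $A(\widetilde\Gamma)$, hence finitely presented. Therefore $A(\Delta)$ is coherent.

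The only step that needs care is the chordality of the clique expansion. It hinges on the fact that long cycles are clique reduced, which lets Lemma~\ref{lem:square_expansion} apply directly. The main external ingredient is Droms' theorem, which is needed in both directions.
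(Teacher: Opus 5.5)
Your proposal is correct and follows the paper's proof essentially step for step. Both use Droms' chordality criterion, then the implication (\ref{main-1})$\Rightarrow$(\ref{main-2}) of Theorem~\ref{thm:main} to land in $A(\widetilde\Gamma)$ for a clique expansion $\widetilde\Gamma$, then Lemma~\ref{lem:square_expansion} together with the clique-reducedness of cycles of length at least $4$ to see that $\widetilde\Gamma$ is chordal, and finally the fact that coherence passes to subgroups.
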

\begin{proof}
A graph is \deffont{chordal} if it has no induced cycle of length at least $4$.
By a result of Droms \cite{Droms_coherence}, a graph is chordal if and only if the associated RAAG is coherent. Let $\Delta, \Gamma$ be as in the statement: $\Gamma$ is chordal. In particular, $\Gamma$ has no induced squares. By Theorem~\ref{thm:main}, (\ref{dup_main-1})$\Rightarrow$(\ref{dup_main-2}), $A(\Delta)$ embeds as a subgroup of $A(\widetilde \Gamma)$, for some clique expansion $\widetilde \Gamma$ of $\Gamma$. Since cycles of length at least $4$ are clique reduced, $\widetilde \Gamma$ is chordal as well by Lemma~\ref{lem:square_expansion}. Thus, $A(\widetilde \Gamma)$ is coherent, and so is its subgroup $A(\Delta)$.
\end{proof}

\begin{cor}
\label{cor:no_universal}There is no finite graph $\Gamma$ without induced squares such that for every finite graph $\Delta$ without induced squares, $A(\Delta)$ measurably embeds into $A(\Gamma)$.
\end{cor}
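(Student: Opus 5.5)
Suppose for contradiction that such a finite graph $\Gamma$ without induced squares exists. By Theorem~\ref{thm:main}, implication (\ref{main-1})$\Rightarrow$(\ref{main-4}), for every finite graph $\Delta$ without induced squares we would get $\clr(\Delta)\leq\Gamma\ext_k$. To reach a contradiction, we will apply Lemma~\ref{lem:universal} with $R=4$. That lemma says that no finite graph $\Gamma$ has the property that every connected finite graph $\Delta$ with no induced triangles and no induced squares embeds as an induced subgraph of $\Gamma\ext_k$.

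So the plan is as follows. Take an arbitrary connected finite graph $\Delta$ without induced $r$-gons for $3\le r\le 4$. We check that this $\Delta$ falls within the scope of the hypothesis on $\Gamma$ (no induced squares), and then that $\clr(\Delta)=\Delta$.

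For the second point, there are two cases. If $\Delta$ has at least two vertices, let $v\neq w$ be vertices with $\st_\Delta(v)=\st_\Delta(w)$. Then $w\in\st_\Delta(v)$, so $v$ and $w$ are adjacent. If some third vertex $u$ existed in this common star, then $u,v,w$ would form a triangle. Hence $\st_\Delta(v)=\{v,w\}$, and by connectedness $\Delta$ is a single edge. A single edge has no triangles or squares, but its clique reduction is a vertex. This is the only triangle-free connected graph that fails to be clique reduced. The one-vertex graph is trivially clique reduced. So for every connected triangle-free $\Delta$ other than the single edge, $\clr(\Delta)=\Delta$, and the hypothesis gives $\Delta\leq\Gamma\ext_k$.

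The single edge also embeds in $\Gamma\ext_k$ unless $\Gamma$ has no edges. This follows from Remark~\ref{rem:clique-graph}: a single vertex $x$ and the clique $\{x\}$, or two adjacent vertices, give an edge in $\Gamma_k\leq\Gamma\ext_k$. Any one vertex $x$ of $\Gamma$ yields the two vertices $\{x\}$ and $\{x,y\}$ for an edge $\{x,y\}$. If $\Gamma$ has no edges at all, the single edge might not embed. But in that case take $\Delta$ to be a pentagon, which is square-free and clique reduced. It would then need to embed in $\Gamma\ext_k$, whose vertices are singletons of an edgeless graph $\Gamma\ext$, hence $\Gamma\ext_k$ is edgeless. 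That is a contradiction directly.

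Assuming $\Gamma$ has an edge, every connected finite graph without induced triangles or squares embeds in $\Gamma\ext_k$, contradicting Lemma~\ref{lem:universal}. The main (mild) obstacle is this bookkeeping around clique reduction. An alternative is to avoid the edge case entirely by quoting the Erdős graph from the proof of Lemma~\ref{lem:universal}: it has high chromatic number, so it is not a single edge. Since it is connected, triangle-free and not an edge, it is clique reduced and must embed in the $(2^n-1)$-colourable graph $\Gamma\ext_k$, which is impossible. I would write the proof this way, as it is cleanest.
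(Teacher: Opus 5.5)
Your proof is correct and is essentially the paper's: combine (\ref{main-1})$\Rightarrow$(\ref{main-4}) of Theorem~\ref{thm:main} with Lemma~\ref{lem:universal} for $R=4$, using that a connected graph with no induced triangles or squares is either a single edge or clique reduced. Your explicit handling of the single-edge case, which the paper's one-line argument passes over, is a good addition; in the Erd\H{o}s-graph version, take $n\geq 2$ in the colouring so that a graph that is not $(2^n-1)$-colourable cannot be an edge.
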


\begin{proof}
Let $\Gamma$ be a finite graph with no induced square, and assume for the sake of contradiction that for every finite graph $\Delta$ with no induced square, $A(\Delta)$ measurably embeds into $A(\Gamma)$. In particular this holds whenever $\Delta$ is connected and has girth at least $5$, in which case $\Delta$ is either an edge or clique reduced. Theorem~\ref{thm:main} implies that for any such $\Delta$, one has $\Delta\leq\Gamma\ext_k$. This contradicts Lemma~\ref{lem:universal}.  
\end{proof}
\medskip

\begin{ex}\label{ex:square}
The following example shows that the implication (\ref{dup_main-4})$\Rightarrow$(\ref{dup_main-3}) from Theorem~\ref{thm:main} does not hold in general if $\Gamma$ is allowed to contain a square. 

Let $\Gamma$ be a square with four vertices $w,x,y,z$ (in this order), and let $\Delta$ be the line graph on four vertices $a,b,c,d$ (in this order).

Then $\Delta\leq\Gamma_k\leq \Gamma\ext_k$, as shown by the map 
\[a\mapsto w, \quad
b\mapsto  x, \quad
c  \mapsto  y, \quad
d  \mapsto \{y,z\}.\]

On the other hand, for any $n\in\NN$, the group $A(\Delta)$ does not embed as a subgroup in $G_n:=(\ZZ^n\ast\ZZ^n)\times (\ZZ^n\ast\ZZ^n)$, which is the graph product of $\mathbb{Z}^n$ over $\Gamma$. Indeed, assume that such an embedding $f$ exists. Write $G_n=F\times F'$ with both $F,F'$ isomorphic to $\ZZ^n\ast\ZZ^n$. 
\medskip

The centraliser of any non-trivial element $g\in \ZZ^n\ast\ZZ^n$ is abelian, and either conjugate to one of the two $\ZZ^n$ free factors (if $g$ is conjugate into such a free factor), or else isomorphic to $\ZZ$, in fact equal to the unique maximal cyclic subgroup of $\ZZ^n\ast\ZZ^n$ containing $g$. In particular, if $g,h,k\in\ZZ^n\ast\ZZ^n$ are non-trivial elements such that $[g,h]=1$ and $[g,k]=1$, then $[h,k]=1$ (one says that $\ZZ^n\ast\ZZ^n$ is commutative transitive). In other words, if $g,h\in F$ are non-trivial and commute, then $Z_F(g) = Z_F(h)$. Thus $Z_{G_n}((g,e)) = Z_F(g)\times F' = Z_F(h)\times F' = Z_{G_n}((h,e))$. This proves that any two non-trivial commuting elements of $F\times \{e\}$ have the same centraliser in $G_n$ (and the same holds symmetrically for $\{e\}\times F'$).

\medskip
Write $f(b)=(g,h)\in F\times F'$. Then, $Z_{G_n}(f(b)) = Z_F(g)\times Z_{F'}(h)$. Since $f$ is injective and the centraliser of $b$ in $A(\Delta)$ is non-abelian, one of $g,h$ must be trivial by the observation above. This means that $f(b)$ is either in $F\times \{e\}$ or in $\{e\}\times F'$. The same is true of $f(c)$. Now, $f(b)$ and $f(c)$ cannot be both in, say, $F\times \{e\}$ because they commute and have distinct centralisers in $G_n$: $f(a)$ is in one but not the other. Without loss of generality, say $f(b)\in F\times \{e\}$ and $f(c)\in \{e\}\times F'$.

In particular, $[f(a),f(c)]\in \{e\}\times F'$ and $[f(b),f(d)]\in F\times \{e\}$. It follows that
\[\left[[f(a),f(c)],[f(b),f(d)]\right]=1\] while $[[a,c],[b,d]]\neq 1$ (the obvious word representing the left side is reduced), in contradiction with the injectivity of $f$.
\end{ex}
\medskip

\begin{q}
Let $\Delta$ be the line graph on four vertices, and $\Gamma$ be the square graph. Does $A(\Delta)$ measurably embed in $A(\Gamma)$?

Note that by a theorem of Rull \cite{Rul}, $A(\Delta)$ quasi-isometrically embeds into $A(\Gamma)$.
\end{q}

\bigskip
We finish this section with a statement that records situations where a measurable embedding of $A(\Delta)$ into $A(\Gamma)$ forces $A(\Delta)$ to embed as a subgroup in $A(\Gamma)$; this is in contrast with the example given in Remark~\ref{rem:meas-not-alg}.

\begin{thm}
\label{thm:meas-algebraic}
Let $\Delta,\Gamma$ be finite graphs. Assume that 
\begin{itemize}
\item every vertex preimage of the clique map $\Delta\to \clr(\Delta)$ is either a vertex or an isolated edge, and for every vertex $v\in V(\Delta)$, the link $\lk_\Delta(v)$ is not a clique with at least two vertices;
\item $A(\Gamma)$ is two-dimensional and not isomorphic to $\mathbb{Z}\times F_n$ for any $n\geq 1$, and $\Gamma$ has no induced square. 
\end{itemize}
Then $A(\Delta)$ measurably embeds into $A(\Gamma)$ if and only if $A(\Delta)$ embeds as a subgroup in $A(\Gamma)$.
\end{thm}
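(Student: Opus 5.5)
The reverse implication is immediate from Remark~\ref{alg embedding is meas embedding}. For the direct implication, I would first pass to the clique reduction. By Corollary~\ref{cor:ME_clique_reduction} and Remarks~\ref{rem:oe-me} and~\ref{transitivité du plongement ME}, $A(\clr(\Delta))$ measurably embeds into $A(\Gamma)$. By Proposition~\ref{prop:parabolic_support}, there is then a parabolic witness map from $\clr(\Delta)$ to $\Gamma$. Since $A(\Gamma)$ is two-dimensional and $\Gamma$ has no induced squares, $\Gamma$ has neither induced triangles nor induced squares.

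The aim is then to apply Proposition~\ref{prop:witness-to-graph-warmup} to $\clr(\Delta)$, which would give $\clr(\Delta)\leq\Gamma\ext$. This requires checking that no link in $\clr(\Delta)$ is a clique with at least two vertices. Suppose $\bar v\in V(\clr(\Delta))$ has link a clique $\{\bar w_1,\dots,\bar w_k\}$ with $k\geq 2$. Lift $\bar v$ to a vertex $v\in\Delta$. If the preimage of $\bar v$ is a single vertex, then $\lk_\Delta(v)$ is the union of the preimages of the $\bar w_i$, which is a clique with at least two vertices. If the preimage is an isolated edge $\{v,v'\}$, then $\bar v$ is an isolated vertex, so $k=0$, a contradiction. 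So the first case must hold, and it contradicts the hypothesis on $\Delta$. Hence Proposition~\ref{prop:witness-to-graph-warmup} applies, and Theorem~\ref{thm:main_Kim_Koberda} yields $A(\clr(\Delta))\leq A(\Gamma)$.

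It remains to recover $\Delta$ from $\clr(\Delta)$. By hypothesis, $\Delta$ is obtained from $\clr(\Delta)$ by replacing some isolated vertices with isolated edges. Therefore
\[A(\Delta)\cong A(\Delta_0)\ast(\ZZ^2)^{\ast a}\ast\ZZ^{\ast b},\]
where $\Delta_0$ is the union of the non-isolated components. Since vertex preimages of size two are isolated edges, $\Delta_0$ is already clique reduced and appears in $\clr(\Delta)$.

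The main obstacle is to embed each isolated $\ZZ^2$ compatibly, and this is where the hypothesis $A(\Gamma)\not\cong\ZZ\times F_n$ enters. My approach is to work in $\Gamma\ext$ directly rather than at the group level: I would embed the whole of $\Delta$ as an induced subgraph of $\Gamma\ext$. Start from the embedding $\Delta_0\leq\Gamma\ext$ given by the witness-map argument applied to the components of $\clr(\Delta)$ other than the isolated vertices. Then add the isolated edges and isolated vertices one at a time, each far away from everything already placed. For a new isolated edge, I need an edge $xy$ of $\Gamma\ext$ such that neither $x$ nor $y$ is equal or adjacent to any of the finitely many vertices already chosen.

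Such edges should exist because $\Gamma\ext$ has infinite diameter in every relevant component. By Lemma~\ref{lem:extension_properties}, this holds as soon as $\Gamma$ is not a single vertex and is not a join. Since $\Gamma$ is triangle- and square-free, a connected $\Gamma$ that splits as a join must be a star, which is exactly the excluded case $\ZZ\times F_n$. If $\Gamma$ is disconnected, $\Gamma\ext$ has infinitely many components, and some of them contain edges because $A(\Gamma)$ is not free. One can translate by group elements in the style of Lemma~\ref{lemma:xtype} to move the new edge away from the finitely many vertices already used. This gives $\Delta\leq\Gamma\ext$, hence $A(\Delta)\leq A(\Gamma)$ by Theorem~\ref{thm:main_Kim_Koberda}.

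The delicate check I expect is that, for $\Gamma$ connected, every edge of $\Gamma$ lies in an infinite-diameter configuration. That is, I need the star-freeness of $\Gamma$ to guarantee edges arbitrarily far from any finite set. This follows from the infinite diameter of $\Gamma\ext$ together with the fact that every vertex of $\Gamma\ext$ lies on an edge when $\Gamma$ is connected with at least two vertices.
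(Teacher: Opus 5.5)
Your proposal is correct and follows essentially the paper's argument. You get an embedding of a clique-reduced piece of $\Delta$ into $\Gamma\ext$ via Proposition~\ref{prop:parabolic_support} and Proposition~\ref{prop:witness-to-graph-warmup}, then place the isolated edges far apart (using that $\Gamma$ is not a join, so $\Gamma\ext$ has infinite diameter or infinitely many components with edges), and conclude with Theorem~\ref{thm:main_Kim_Koberda}; the only difference is that the paper reaches a clique-reduced graph by deleting the isolated-edge components (a subgroup of $A(\Delta)$) instead of passing to $\clr(\Delta)$ via orbit equivalence.
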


Note that the assumptions on $\Delta$ are satisfied whenever $A(\Delta)$ is two-dimensional, thereby recovering Theorem~\ref{thm:intro-meas-algebraic} from the introduction. They also cover other situations, for instance the case where $\Delta$ is clique reduced and has no hat vertices.

\begin{proof}
Assume that $A(\Delta)$ measurably embeds into $A(\Gamma)$. Let $\Delta'\leq\Delta$ be the subgraph obtained by removing all connected components which are edges (say there are $n$ of them). Then $A(\Delta')$ still measurably embeds into $A(\Gamma)$, and our assumption on $\Delta$ ensures that $\Delta'$ is clique reduced. Propositions~\ref{prop:parabolic_support} and~\ref{prop:witness-to-graph-warmup} ensure that $\Delta'\leq\Gamma\ext$. 

We claim that we can find $n$ edges $e_1,\dots,e_n$ of $\Gamma\ext$ such that 
\begin{itemize}
\item for $i\neq j$, no vertex of $e_i$ is adjacent to a vertex of $e_j$;
\item for every $i\in\{1,\dots,n\}$, no vertex of $e_i$ is adjacent to a vertex of $\Delta'$.
\end{itemize}
Indeed, the assumptions on $\Gamma$ ensure that $\Gamma$ does not split non-trivially as a join. If $\Gamma$ is disconnected, our claim follows from the fact that $\Gamma\ext$ has infinitely many connected components that contain edges (Lemma~\ref{lem:extension_properties}). If $\Gamma$ is connected, our claim follows from the fact that $\Gamma\ext$ is connected and of infinite diameter (Lemma~\ref{lem:extension_properties}).

Now the above claim ensures that the embedding $\Delta'\leq\Gamma\ext$ extends to an embedding $\Delta\leq\Gamma\ext$. It thus follows from Theorem~\ref{thm:main_Kim_Koberda} that $A(\Delta)\leq A(\Gamma)$.
\end{proof}

\small

\bibliography{bib}
\bibliographystyle{alpha}

\vfill

\normalsize

	\begin{flushleft}
		\textcolor{DeepSkyBlue4}{Adrien Abgrall}\\
		Université Paris-Saclay, CNRS,  Laboratoire de mathématiques d'Orsay, 91405, Orsay, France \\
		\emph{e-mail:~}\texttt{firstname.lastname@universite-paris-saclay.fr}\\[4mm]
	\end{flushleft}

	\begin{flushleft}
		\textcolor{DeepSkyBlue4}{Alexandra Gurieva}\\
		ENS de Lyon,  Unité de Mathématiques Pures et Appliquées, 69007, Lyon, France \\
		\emph{e-mail:~}\texttt{firstname.lastname@ens-lyon.fr}\\[4mm]
	\end{flushleft}

	\begin{flushleft}
		\textcolor{DeepSkyBlue4}{Camille Horbez}\\
		Université Paris-Saclay, CNRS,  Laboratoire de mathématiques d'Orsay, 91405, Orsay, France \\
		\emph{e-mail:~}\texttt{firstname.lastname@universite-paris-saclay.fr}\\[4mm]
	\end{flushleft}

\noindent\textcolor{black!70}{\small The authors acknowledge support from the European Research Council, through Grant~101040507 Artin-Out-ME-OA. Views and opinions expressed are however those of the authors only and do not necessarily reflect those of the European Union or the European Research Council; neither the European Union nor the granting authority can be held responsible for them.}
\end{document}